\newtheorem{lemma}{Lemma}[section]
\newtheorem{proposition}[lemma]{Proposition}
\newtheorem{corollary}[lemma]{Corollary}
\newtheorem{theorem}[lemma]{Theorem}
\theoremstyle{definition}
\newtheorem{definition}[lemma]{Definition}
\newtheorem{remark}[lemma]{Remark}
\newtheorem{example}[lemma]{Example}
\theoremstyle{definition}
\theoremstyle{definition}
\newtheoremstyle{mythmstyle}%
    {}%
    {}%
    {\it}%
    {}%
    {\bf}%
    {}%
    { }%
    {\thmname{#1}\thmnumber{ #2}%
    \thmnote{: #3\addcontentsline{toc}{subsubsection}{{\it#1}: #3}}. }
\theoremstyle{mythmstyle}
\Crefname{theorem}{Theorem}{Theorems}
\Crefname{lemma}{Lemma}{Lemmas}
\Crefname{definition}{Definition}{Definitions}
\Crefname{proposition}{Proposition}{Propositions}
\Crefname{corollary}{Corollary}{Corollaries}
\DeclareMathOperator{\ker/}{ker}
\DeclareMathOperator{\im/}{Im}
\DeclareMathOperator{\dom/}{Dom}
\DeclareMathOperator{\rk/}{rk}
\DeclareMathOperator{\aut/}{Aut}
\DeclareMathOperator{\pat/}{PAut}
\DeclareMathOperator{\sub/}{Sub}
\def\Tn/{\mathcal{T}_n}
\def\In/{\mathcal{I}_n}
\def\On/{\mathcal{O}_n}
\def\Sn/{\mathcal{S}_n}
\def\An/{\mathbf{A}_n}
\def\lcs/{$\mathcal{LC}(S)$}
\def\rcs/{$\mathcal{RC}(S)$}
\def\gwt/{G\wr\mathcal{T}_n}
\def\gwi/{G\wr\mathcal{I}_n}
\def\gwip/{G\wr\mathcal{I}_{n+1}}
\def\gws/{G\wr\mathcal{S}_n}
\def\gwsk/{G\wr\mathcal{S}_k}
\def\bA/{\mathbf{A}}
\def\lA/{\mathcal{L}(\bA/)}
\def\iqm/{\mathbf{Q}_m}
\newcommand\textequal{%
 \rule[.4ex]{4pt}{0.4pt}\llap{\rule[.7ex]{4pt}{0.4pt}}}
\providecommand{\abstract}[1]
{
  \small	
  \textbf{{Abstract}} #1
}
\author{Matthew D G K Brookes \thanks{Email: {\tt mdkb500@york.ac.uk}}}
\affil{{\small \em Department of Mathematics, University of York,} \\ \small{\em York, YO10 5DD}}
\def\keywords{\small \vspace{.5em}
{\textbf{Keywords:}\,\relax%
}}
\date{}
\begin{document}

\title{Congruences on the Partial Automorphism Monoid of a Free Group Action}

\maketitle

\begin{abstract}
We study congruences on the partial automorphism monoid of a finite rank free group action.  We give a decomposition of a congruence on this monoid into a Rees congruence, a congruence on a Brandt semigroup and an idempotent separating congruence.  The constituent parts are further described in terms of subgroups of direct and semidirect products of groups.  We utilize this description to demonstrate how the number of congruences on the partial automorphism monoid depends on the group and on the rank of the action.
\end{abstract}

\keywords{Free group action, Partial automorphism monoid, Congruences, Subgroups of direct products}

\let\thefootnote\relax\footnote{AMS Mathematics Subject Classification 2010: 20M20, 16W22, 20M18}

\section{Introduction}

The study of congruences is acknowledged as fundamental to understanding the structure of semigroups. Inverse semigroups are arguably the most studied class of semigroups, and there are well trodden paths for the hardy semigroup theorist to follow in order to get a hold of their congruences \cite{congs inv 1}, \cite{congs inv 2}.  Of central importance in the field of inverse semigroup theory is the symmetric inverse monoid $\mathcal{I}_X,$ which plays the same role as the symmetric group does within group theory. Congruences on the symmetric inverse monoid are well understood \cite{Liber congs I_n}. 

Monoids and semigroups similar to the symmetric inverse monoid in derivation or structure are valuable and interesting objects of study.  Recent forays into the study of congruences on ``transformation-like'' semigroups include \cite{congs diagram} and \cite{congs direct prods}. Natural generalizations of $\mathcal{I}_X$ and $\mathcal{T}_X$ arise from the partial automorphism monoids and endomorphism monoids of independence algebras, a concept introduced as $v^*$-algebras in \cite{v* alg} and formulated in its modern form in \cite{Gould ind alg} and \cite{Fountain ind alg}. Of course, we can regard a set $X$ as a universal algebra with no basic operations; viewed in this way it is an independence algebra. 

The class of independence algebras generalizes notions of linear independence and spanning sets and includes the classes of sets, vector spaces and free group actions.  Endomorphism monoids of independence algebras are studied in \cite{Gould ind alg}. In this paper we focus on the partial automorphism monoid of a finite rank free group action. Such monoids will have a wreath product like structure and so we denote then by $G\wr\mathcal{I}_n,$ where $G$ is the group in question and $n$ the rank. 

We build on general results concerning congruences on partial automorphism monoids of independence algebras due to Lima \cite{Lima thesis} and describe congruences on $\gwi/$ in terms of normal subgroups of $G^i$ for $1\leq i\leq n$ and a subgroup of $G\wr \mathcal{S}_m$ for some $1\leq m\leq n.$  This leads us to consider permutation invariant normal subgroups of $G^i,$ and we shall see that these subgroups can be described in terms of three normal subgroups of $G$ and a homomorphism.

One of the most basic questions that can be asked about the set of congruences on a semigroup is how large is this set? Our techniques permit us to determine bounds for the number of congruences on $\gwi/$ depending on both $n$ and $G.$

The paper is set out as follows.  In Section $2$ we introduce the partial automorphism monoid for a rank $n$ free group action and show that it is isomorphic to the partial wreath product $\gwi/$.  Section $3$ comprises a discussion of congruences on $\gwi/$ and gives a unique decomposition for a congruence in terms of a Rees congruence, a set of subgroups of $G^i$ for $1\leq i\leq n$ and a subgroup of $G\wr\mathcal{S}_m$ for some $1\leq m\leq n.$  Moreover we demonstrate that this decomposition is compatible in a natural way with the usual inclusion ordering on congruences.  In Section $4$ we analyze the subgroups arising in Section $3$ and describe permutation invariant subgroups of $G^i$ in terms of subgroups of $G$ and a homomorphism from a subgroup of $G$ to a quotient of that same subgroup. By appeal to Usenko's description of subgroups of semidirect products we classify normal subgroups of $G\wr\mathcal{S}_m.$  Finally, in Section $5$ we describe the asymptotic behavior of the number of congruences as $n$ increases and determine how this growth depends on the group in question. 

\section{Preliminaries}

Throughout $G$ is a group and $S$ an inverse semigroup, $E(S)$ is the set of idempotents of $S$ where $e\in S$ is idempotent if $e^2=e$. For a group $G$ we define $G^0$ to be the group with a zero adjoined, so $G^0=G\cup \{0\}$ with multiplication extended by declaring $g0=0=0g$ for all $g\in G^0.$  By $\Sn/$ we refer to the symmetric group and by $\In/$ the symmetric inverse monoid of degree $n.$  For $a\in \In/$ we define the rank of $a$ to be $\rk/(a)=|\im/(a)|.$ As usual, $\mathcal{H,R,L,D,J}$ denote Green's relations, and for $a\in S$ and $K\in {H,R,L,D,J}$ we write $K_a$ for the $\mathcal{K}$-class of $a$ (we use $\mathcal{K}(S)$ and $K(S)_a)$ when $S$ is ambiguous). We shall want to refer to the lattice of congruences on $S$ and we denote this by $\mathfrak{C}(S),$ in the case of a group we write $\mathfrak{N}(G)$ for the lattice of normal subgroups of $G.$  For a congruence $\rho$ on $S$ let $a\rho$ be the $\rho$-class of $a.$  We denote by $[n]$ the set $\{1,2,\dots,n\}.$

\subsection{The partial automorphism monoid of a free group action}

A {\em group action} or {\em $G$-act} $\mathbf{A}$ is a non empty set $A$ together with a function $\Phi: G\times A\rightarrow A$ where $(g,a)\mapsto g\cdot a,$ such that $(gh)\cdot a = g\cdot (h\cdot a)$ and $1\cdot a=a$ for all $g,h\in G$ and $a\in A.$  To simplify notation we write $ga$ for $g\cdot a.$  A subset $B\subseteq A$ is a {\em subact} of $\mathbf{A}$ if $B$ is also a $G$-act under the restriction of the action to $B.$ For $G$-acts $\mathbf{A}$ and $\mathbf{B}$ a function $\gamma\colon A\rightarrow B$ is a {\em $G$-act homomorphism} if $g(a\gamma)=(ga)\gamma$ for all $g\in G$ and $a\in A.$  The {\em free $G$-act over a set $X$} is written as $\mathbf{A}_X=G\times X,$ and $g(h,x)=(gh,x).$ We shall usually drop the brackets and write $gx$ for $(g,x),$ we identify $(1,x)=1x$ with $x$ and use $Gx$ to denote $\{gx\mid g\in G\}.$ When $X=\{x_i\mid i\in [n]\}$ then we write $\mathbf{A}_X=\mathbf{A}_n.$  The {\em rank} of a free $G$-act is the cardinality of $X,$ so $\mathbf{A}_n$ is the free $G$-act of rank $n.$

For an algebra (in the sense of universal algebra) a subalgebra is a subset which is an algebra of the same type upon the restriction of the operations; we denote the subalgebras of $\mathbf{A}$ by $\sub/(\mathbf{A}).$ The {\em partial automorphism monoid}, $\pat/(\mathbf{A})$ for an algebra $\mathbf{A}$ is the set of isomorphisms between two (not necessarily distinct) subalgebras  
$$\pat/(\mathbf{A})=\{f\colon \mathbf{B}\rightarrow \mathbf{A}\mid \mathbf{B}\subseteq \mathbf{A} \text{ a subalgebra},\ f \text{ an injective homomorphism}\},$$
under composition of partial functions.  Associated to $\pat/(\mathbf{A})$ are the domain function $\dom/:\pat/(\mathbf{A})\rightarrow \sub/(\mathbf{A})$ and image function $\im/:\pat/(\mathbf{A})\rightarrow \sub/(\mathbf{A}).$  Concretely, the composition of $a,b\in \pat/(\mathbf{A})$ has $\dom/(ab)=(\im/(a)\cap \dom/(b))a^{-1}$ and $\im/(ab)=(\im/(a)\cap \dom/(b))b$, with $a^{-1}$ the inverse of $a$ as a partial function, and $x(ab)=(xa)b$ for all $x\in \dom/ ab.$  With this operation the set of partial automorphisms is an inverse monoid.  This has a zero which is the empty map, the group of units is the automorphism group of $\mathbf{A}$ and the semilattice of idempotents is actually a lattice, and is isomorphic to the lattice of subalgebras of $\mathbf{A}.$

\begin{lemma}\label{subacts}
Let $\mathbf{A}_X$ be a free $G$-act. If $Y\subseteq X$ then $\mathbf{A}_Y$ is a subact of $\mathbf{A}_X.$  Conversely if $\mathbf{B}\subseteq \mathbf{A}_X$ is a subact then there is $Y\subseteq X$ such that $\mathbf{B}=\mathbf{A}_Y.$ Consequently if $\mathbf{B}\subseteq\mathbf{A}$ a subact then $\mathbf{B}$ is a free $G$-act.
\end{lemma}

\begin{proof}
This is immediate noting that for $x\in X$ if there is $g\in G$ such that $gx\in B$ then certainly $Gx\subseteq B.$
\end{proof}

Lemma \ref{subacts} gives that for the $n$-rank free $G$-act $\mathbf{A}_n$ the set of subacts is isomorphic to the set of subsets of $[n].$ Moreover, for $\theta\in \pat/(\mathbf{A}_n),$ if $x_i\theta = kx_j$ it is clear that $\theta|_{Gx_i}$ is a $G$-act isomorphism from $Gx_i$ to $Gx_j.$  Thus a partial automorphism $\theta\in \pat/\mathbf{A}_n$ defines an element $a_\theta\in \mathcal{I}_n$ where $i\in \dom/a_\theta$ if and only if $x_i\in \dom/\theta,$ and $ia_\theta=j$ where $x_i\theta\in Gx_j.$  Furthermore it is clear that the map $\theta \mapsto a_\theta$ defines a homomorphism $\pat/(\mathbf{A}_n)\rightarrow \In/.$ 

For a set $X$ we write $G^{PX}=\{f\colon Y\rightarrow G\mid  Y\subseteq X\}$ for the set of partial functions from $X$ to $G.$  We define the product of $f,g\in G^{PX}$ by: 
$$\dom/(fg)=\dom/(f)\cap\dom/(g),\quad x(fg)=(xf)(xg) \text{ for } x\in \dom/(fg).$$

Given $\theta\in \pat/(\mathbf{A}_n)$ we define $f_\theta$ as the group label of the restriction of $\theta$ to the set $\dom/(\theta)\cap \{x_i\mid i\in [n]\}$ (so if $x_i\theta=gx_j$ then $f_\theta=g$). Then $f_\theta\in G^{P[n]}$ and we notice that $\theta$ is fully characterized by $f_\theta$ and $a_\theta$ as this defines $x\theta$ for each $x$ with $Gx\subseteq \dom/(\theta)$ and we can then uniquely extend $\theta$ to the rest of $\dom/(\theta)$ by the freeness of $\mathbf{A}_n.$

This leads us to consider a partial wreath product as defined in \cite{meldrum1995wreath}.  For $a\in \In/$ and $f\in G^{P[n]}$ define $f^a$ as:
$$\dom/(f^a)=\{i\in \dom/(a)\mid ia\in\dom/(f)\},\quad i(f^a)=f(ia).$$
Then we define the {\em partial wreath product} $\gwi/$ to be:
$$\gwi/=\{(f;a)\in G^{P[n]}\times \In/\mid \dom/(f)=\dom/(a)\},$$
with multiplication
$$(f;a)(g;b)=(fg^a;ab).$$
The proof of the following theorem is essentially folklore, we present an outline here for completeness. 

\begin{theorem}
The function
$$\Phi: \pat/\mathbf{A}_n\rightarrow \gwi/;\ \ \theta\mapsto (f_\theta;a_\theta)$$
is an isomorphism.
\end{theorem}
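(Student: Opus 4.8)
The plan is to check in turn that $\Phi$ is well defined, that it is a bijection, and that it is a monoid homomorphism, with the last of these being the substantive point. Well-definedness only requires that $(f_\theta;a_\theta)$ actually lies in $\gwi/$, that is, that $\dom/(f_\theta)=\dom/(a_\theta)$; but both sets are equal to $\{i\in[n]\mid x_i\in\dom/\theta\}$ straight from the definitions of $f_\theta$ and $a_\theta$, so this is just unwinding notation. Preservation of the identity is equally immediate: the identity automorphism has domain all of $\mathbf{A}_n$ and fixes every generator, so it maps to the pair consisting of the total constant map at $1_G$ and $\mathrm{id}_{[n]}$, which is the identity of $\gwi/$.

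For bijectivity I would exhibit an explicit inverse. Given $(f;a)\in\gwi/$, put $Y=\dom/(a)\subseteq[n]$ and define $\theta$ on the subact $\mathbf{A}_Y$ by $x_i\mapsto f(i)x_{ia}$ for $i\in Y$, extended by $gx_i\mapsto gf(i)x_{ia}$. By Lemma~\ref{subacts} every subact of $\mathbf{A}_n$ arises as some $\mathbf{A}_Y$, so this does prescribe a partial map whose domain is a subact; it is a $G$-act homomorphism by construction, and it is injective because $a\in\In/$ is injective on $Y$, so the blocks $Gx_{ia}$ are pairwise disjoint, while $g\mapsto gf(i)$ is a bijection on each block. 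Hence $\theta\in\pat/(\mathbf{A}_n)$ with $\Phi(\theta)=(f;a)$, giving surjectivity. Injectivity is the remark that $\Phi(\theta)$ records the domain of $\theta$ (via $\dom/(a_\theta)$) together with the value $x_i\theta=f_\theta(i)x_{ia_\theta}$ on each generator; since a homomorphism out of a free $G$-act is determined by its values on the generators, $\Phi(\theta)=\Phi(\psi)$ forces $\theta=\psi$.

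The homomorphism property is where the twisted action in the wreath-product multiplication earns its keep. Writing $(f;a)=\Phi(\theta)$ and $(g;b)=\Phi(\psi)$, the $\In/$-coordinate is already settled, since $\theta\mapsto a_\theta$ is known to be a homomorphism and so $a_{\theta\psi}=ab$. For the group label I would compute $x_i(\theta\psi)=(x_i\theta)\psi$ for $i$ in the relevant domain: applying $\theta$ gives $f(i)x_{ia}$, and then applying $\psi$ and using that $\psi$ respects the $G$-action yields $f(i)\,g(ia)\,x_{(ia)b}$. Thus $f_{\theta\psi}(i)=f(i)g(ia)=f(i)\,g^a(i)=(fg^a)(i)$, which is exactly the first coordinate of $(f;a)(g;b)=(fg^a;ab)$. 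I would finish by matching domains: $i\in\dom/(f_{\theta\psi})$ precisely when $x_i\in\dom/\theta$ and $x_i\theta\in\dom/\psi$, equivalently $i\in\dom/(a)$ and $ia\in\dom/(b)$, which is the defining condition for $\dom/(fg^a)$.

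The main obstacle is bookkeeping rather than conceptual difficulty: one must keep the left-to-right composition convention for partial maps aligned with the definition of the twisted label $f^a$, and track the two domain conditions carefully so that both the group-label computation and the domain computation independently reproduce the product $(fg^a;ab)$. Once the conventions are pinned down, everything rests on the freeness of $\mathbf{A}_n$ and on Lemma~\ref{subacts}, so no genuinely new idea is needed beyond the correct reading of the definitions.
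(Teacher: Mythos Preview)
Your proposal is correct and follows essentially the same approach as the paper's proof: injectivity via freeness (values on generators determine the map), surjectivity by explicitly building $\theta$ from $(f;a)$, and the homomorphism property by computing $x_i(\theta\psi)$ directly and invoking that $\theta\mapsto a_\theta$ is already known to be a homomorphism. Your version is somewhat more detailed on well-definedness and domain bookkeeping, but the structure and key ideas are the same.
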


\begin{proof}
As we have noted a partial automorphism $\theta$ is determined by $f_\theta$ and $a_\theta,$ thus $\Phi$ is injective. It is also straightforward that given $f\in G^{P[n]}$ and $a\in \In/$ with $\dom/(a)=\dom/(f)$ the function $\theta: x_i\mapsto (if)x_{ia}$ extends uniquely to a partial automorphism with $f=f_\theta$ and $a=a_\theta$ so $\Phi$ is surjective.  Thus it remains to show that $\Phi$ is a homomorphism.

To this end suppose that $\theta,\gamma\in \pat/(\mathbf{A}_n).$ Then as the function $\theta\mapsto a_\theta$ is a homomorphism we have that $a_{\theta\gamma}=a_\theta a_\gamma$ and it is clear that $\dom/(f_{\theta\gamma})=\dom/(a_{\theta\gamma}).$  Also for $i\in \dom/(a_{\theta\gamma})$ we have $i f_{\theta\gamma}$ is the group label of $x_i(\theta\gamma)= (x_i\theta)\gamma= (f_\theta x_{ia_\theta})\gamma,$ and the group label of $x_{ia_\theta}\gamma$ is $(f_\gamma)a.$  Thus $i f_{\theta\gamma}= f_\theta f_\gamma^{a_\theta}$ and so $\Phi$ is a homomorphism. 
\end{proof}

While this is the morally correct way to construct $\gwi/,$ and justifies us referring to it as a partial wreath product is it generally unwieldy, and sends one down endless rabbit-holes of notational difficulties.  Fortunately it is possible to pin down a less aesthetically pleasing but more ``user-friendly'' version.

To establish this, let $a\in \In/$, and $g\in (G^0)^n.$  Write $g_a=(g_{1a},\dots,g_{na})$ where we take $g_{ia}=0$ if $ia$ is undefined. In particular we write $1_e$ for $(1,1\dots,1)_e$. We define 
$$S=\{ (g,a)\in (G^0)^n\times \In/\ |\ g_i\neq 0 \iff i\in \dom/(a)\},$$
with multiplication 
$$(g;a)(h;b)=(g_1,\dots,g_n;a)(h_1,\dots,h_n;b)=(g_1h_{1a},\dots,g_nh_{na};ab)=(gh_a;ab).$$ 
It is elementary that $S\cong \gwi/$ via the map $(g;a)\mapsto (f;a)$ where $\dom/(f)=\{i\in [n]\mid g_i\neq 0\}$ and $if=g_i.$  From this point forward we shall refer to this second formulation when we write $\gwi/.$

As $\pat/(\mathbf{A}_n)$ is inverse we have that $\gwi/$ is an inverse semigroup, and we observe that $$(g;a)^{-1}=(g_1,\dots,g_n;a)^{-1}=(g_{1a^{-1}}^{-1},\dots,g_{na^{-1}}^{-1};a^{-1})=(g_{a^{-1}}^{-1};a^{-1})$$
where we write $0^{-1}=0.$  Also notice that the condition $g_i\neq 0$ if and only if $i\in \dom/(a)$ for $(g;a)$ to be in $\gwi/$ can be reformulated as $g=g_{aa^{-1}}$ and $g\neq g_e$ for all $e\in E(\In/)$ with $e\leq aa^{-1}.$

We can view this semigroup pictorially. As is fairly usual we consider $a\in\In/$ as a graph with two rows each of $n$ vertices - indexed as $1,2,\dots,n$ for the upper row and $1^\prime,2^\prime,\dots,n^\prime$ for the lower row - with edges $(i,j^\prime)$ if $ia=j.$  We then consider $(g,a)\in\gwi/$ as the graph of $a$ with the top row labeled with elements $g\in G^0.$ We compose the graphs as elements of $\In/,$ and then ``slide'' the labels up adjacent edges. For an example refer to Figure \ref{gwimult}.

\begin{figure}[!h]
\centering
\begin{tikzpicture}[scale=.45,-,auto,thick,main node/.style={circle,fill=none,inner sep=0pt,minimum size=2pt},normal node/.style={circle,fill=black,inner sep=0pt,minimum size=5pt},this node/.style={circle,fill=grey,inner sep=0pt,minimum size=5pt}]

\node[main node] (a0) at (-6,6){\tiny  $(g;a) =$};
\node[main node] (b0) at (-6,1){\tiny  $(h;b) =$};
\node[main node] (c0) at (3,3.5){\tiny $(g;a)(h,b) =$};
\node[main node] (d0) at (12,3.5){\tiny $=$};

\node[normal node,label=above:{\tiny  $g_1$}] (a1) at (0-4,7){};
\node[normal node,label=above:{\tiny  $g_2$}] (a2) at (1.5-4,7){};
\node[normal node,label=above:{\tiny  $0$}] (a3) at (3-4,7){};
\node[normal node,label=above:{\tiny  $g_4$}] (a4) at (4.5-4,7){};

\node[normal node] (a5) at (0-4,5){};
\node[normal node] (a6) at (1.5-4,5){};
\node[normal node] (a7) at (3-4,5){};
\node[normal node] (a8) at (4.5-4,5){};

\node[normal node,label=above:{\tiny  $h_1$}] (b1) at (0-4,2){};
\node[normal node,label=above:{\tiny  $0$}] (b2) at (1.5-4,2){};
\node[normal node,label=above:{\tiny  $h_3$}] (b3) at (3-4,2){};
\node[normal node,label=above:{\tiny  $h_4$}] (b4) at (4.5-4,2){};

\node[normal node] (b5) at (0-4,0){};
\node[normal node] (b6) at (1.5-4,0){};
\node[normal node] (b7) at (3-4,0){};
\node[normal node] (b8) at (4.5-4,0){};

\draw (a1) -- (a6);
\draw (a2) -- (a5);
\draw (a4) -- (a7);

\draw (b1) -- (b5);
\draw (b3) -- (b6);
\draw (b4) -- (b8);

\node[normal node,label=above:{\tiny  $g_1$}] (c1) at (6,5.5){};
\node[normal node,label=above:{\tiny  $g_2$}] (c2) at (7.5,5.5){};
\node[normal node,label=above:{\tiny  $0$}] (c3) at (9,5.5){};
\node[normal node,label=above:{\tiny  $g_4$}] (c4) at (10.5,5.5){};

\node[normal node,label={[xshift=-0.12cm]right:{\tiny  $h_1$}}] (c5) at (6,3.5){};
\node[normal node,label={[xshift=-0.12cm]right:{\tiny  $0$}}] (c6) at (7.5,3.5){};
\node[normal node,label={[xshift=-0.12cm]right:{\tiny  $h_3$}}] (c7) at (9,3.5){};
\node[normal node,label={[xshift=-0.12cm]right:{\tiny  $h_4$}}] (c8) at (10.5,3.5){};

\draw[->,red] (7.5,4) -- (6.3,5.5);
\draw[->,red] (6.7,4) -- (7.5,5);
\draw[->,red] (9,4) -- (10.2,5.5);

\node[normal node] (d5) at (6,1.5){};
\node[normal node] (d6) at (7.5,1.5){};
\node[normal node] (d7) at (9,1.5){};
\node[normal node] (d8) at (10.5,1.5){};

\draw (c1) -- (c6);
\draw (c2) -- (c5);
\draw (c4) -- (c7);

\draw (c5) -- (d5);
\draw (c7) -- (d6);
\draw (c8) -- (d8);

\node[normal node,label=above:{\tiny  $0$}] (e1) at (13,4.5){};
\node[normal node,label=above:{\tiny  $g_2h_1$}] (e2) at (14.5,4.5){};
\node[normal node,label=above:{\tiny  $0$}] (e3) at (16,4.5){};
\node[normal node,label=above:{\tiny  $g_4h_3$}] (e4) at (17.5,4.5){};

\node[normal node] (e5) at (13,2.5){};
\node[normal node] (e6) at (14.5,2.5){};
\node[normal node] (e7) at (16,2.5){};
\node[normal node] (e8) at (17.5,2.5){};

\draw (e2) -- (e5);
\draw (e4) -- (e6);

\end{tikzpicture}
\caption{\small Multiplication in $\gwi/$}
  \label{gwimult}
\end{figure}

\bigskip

We next give some straightforward initial results about $\gwi/.$  By Lemma \ref{subacts} the subacts of $\mathbf{A}_n$ are in bijective correspondence with subsets of $[n];$ further as previously remarked the set of subacts is also in bijection with semilattice of idempotents of $\gwi/$.

\begin{corollary}
The idempotents $E(\gwi/)$ are precisely the elements of the form 
$$(1_e;e)$$
where $e\in \In/.$  Consequently, $E(\gwi/)$ forms a lattice isomorphic to the subsets of $[n]$ which is isomorphic to the lattice of idempotents $E(\In/).$ 
\end{corollary}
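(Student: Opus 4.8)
The plan is to read off the idempotents directly from the multiplication rule in the second (``user-friendly'') formulation of $\gwi/$, and then to upgrade this set-theoretic description to a semilattice, and hence lattice, isomorphism. First I would unpack what it means for $(g;a)\in\gwi/$ to be idempotent. Using the product formula $(g;a)(g;a)=(gg_a;a^2)$, idempotency is equivalent to the two conditions $a^2=a$ and $gg_a=g$. The first condition says precisely that $a$ is an idempotent of $\In/$, that is, a partial identity $e$ on some subset $Y\subseteq[n]$.

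Next I would analyse the second condition under the assumption $a=e$. Because membership in $\gwi/$ forces $g_i\neq 0\iff i\in Y$, a short coordinatewise computation shows $g_e=g$, so the equation $gg_e=g$ collapses to $g_i^2=g_i$ for each $i\in Y$ (the coordinates outside $Y$ being $0$ on both sides automatically). Since $G$ is a group, $g_i^2=g_i$ forces $g_i=1$, so $g=1_e$. The converse is immediate: substituting gives $(1_e;e)(1_e;e)=(1_e(1_e)_e;ee)=(1_e;e)$. Together these establish that $E(\gwi/)=\{(1_e;e)\mid e\in E(\In/)\}$ and that $e\mapsto(1_e;e)$ is a bijection onto $E(\gwi/)$.

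For the lattice statement I would verify that this bijection is a semilattice homomorphism, which reduces to checking $(1_e;e)(1_f;f)=(1_{ef};ef)$ for all $e,f\in E(\In/)$. Writing $Y_e,Y_f$ for the supports, the label tuple $1_e(1_f)_e$ is $1$ exactly on $Y_e\cap Y_f$ and $0$ elsewhere, which is precisely $1_{ef}$, while the $\In/$-components multiply to $ef$; hence the map respects the idempotent product. Since $E(\In/)$ is the semilattice of partial identities, isomorphic to the Boolean lattice of subsets of $[n]$ under intersection (and, being finite with a top element $1_{[n]}$, a lattice), transporting this structure along the isomorphism yields the asserted chain of lattice isomorphisms, consistently with the bijections already noted in the preceding discussion.

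I do not expect a genuine obstacle here. The only mildly delicate point is tracking the ``padding by $0$'' convention of the second formulation of $\gwi/$ through the reindexing $g\mapsto g_a$; in particular one must confirm coordinatewise that $g_e=g$ when $a=e$ is a partial identity, and that $1_e(1_f)_e=1_{ef}$, where the interaction between the zero entries and the partial reindexing has to be handled carefully. Everything else is a routine translation of the corresponding, already-recorded facts about $E(\In/)$.
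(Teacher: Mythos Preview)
Your proposal is correct and follows essentially the same argument as the paper: both deduce $a=e\in E(\In/)$ from $a^2=a$, observe $g_e=g$, and conclude $g=1_e$ from $g^2=g$ in $(G^0)^n$. You go a little further by explicitly verifying the semilattice homomorphism $(1_e;e)(1_f;f)=(1_{ef};ef)$, which the paper leaves implicit in its ``Consequently'' clause, but this is a routine embellishment rather than a different route.
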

\begin{proof}
Clearly $(1_e;e)\in E(\gwi/)$ for any $e\in E(\In/).$ Conversely, suppose that $(g;a)(g;a)=(g;a).$  Then certainly $a^2=a$, hence $a=e\in E(\In/).$  Then we note that $g_a=g_{e}=g,$ and $g^2=g$; whence $g$ is an idempotent in $(G^0)^n,$ with $g_i=0$ exactly when $i\not\in \dom/ e,$ so $g=1_e.$  
\end{proof}

Noting that for $(g;a)\in \gwi/$
$$ (g^{-1};aa^{-1})(g;a)=(1_{aa^{-1}};a)=(g;a)(g_{a^{-1}}^{-1};a^{-1}a),$$
it is clear that the Green's relations for $\gwi/$ are induced by those for $\In/.$

\begin{lemma}
Let $\mathcal{K}\in \{\mathcal{H,L,R,D,J}\}$ be a Green's relation.  Then 
$$(g;a)\ \mathcal{K}^{\gwi/}\ (h;b) \iff a\ \mathcal{K}^{\In/}\ b.$$
\end{lemma}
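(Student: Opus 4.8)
The plan is to use two structural facts: that $\gwi/$ is an inverse monoid, and that the coordinate projection $\pi\colon \gwi/\to \In/$, $(g;a)\mapsto a$, is a surjective homomorphism (immediate from $(g;a)(h;b)=(gh_a;ab)$). The forward implication is then uniform across the five relations: a homomorphism carries $\mathcal{K}$-related elements to $\mathcal{K}$-related elements, so $(g;a)\ \mathcal{K}\ (h;b)$ gives $a=\pi(g;a)\ \mathcal{K}\ \pi(h;b)=b$. All the content lies in the converse, which I would build from $\mathcal{R}$ and $\mathcal{L}$ upward.

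For $\mathcal{R}$ and $\mathcal{L}$ I would compute the two idempotents attached to an element. Using the formula $(g;a)^{-1}=(g_{a^{-1}}^{-1};a^{-1})$ recorded above, a direct check of the coordinates gives
\[
(g;a)(g;a)^{-1}=(1_{aa^{-1}};aa^{-1}),\qquad (g;a)^{-1}(g;a)=(1_{a^{-1}a};a^{-1}a),
\]
so these idempotents depend only on $aa^{-1}$ and on $a^{-1}a$, and not on the label $g$. In any inverse semigroup one has $x\ \mathcal{R}\ y\iff xx^{-1}=yy^{-1}$ and $x\ \mathcal{L}\ y\iff x^{-1}x=y^{-1}y$, so this yields at once
\[
(g;a)\ \mathcal{R}\ (h;b)\iff aa^{-1}=bb^{-1}\iff a\ \mathcal{R}\ b,
\]
together with the dual statement for $\mathcal{L}$. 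Intersecting the two settles $\mathcal{H}$, and so three of the five relations are disposed of in both directions simultaneously.

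For $\mathcal{D}$ the converse follows by inserting a canonical representative. If $a\ \mathcal{D}\ b$ in $\In/$ then $a\ \mathcal{R}\ c\ \mathcal{L}\ b$ for some $c$, and the equivalences just established give $(g;a)\ \mathcal{R}\ (1_{cc^{-1}};c)\ \mathcal{L}\ (h;b)$, whence $(g;a)\ \mathcal{D}\ (h;b)$. For $\mathcal{J}$ I would deliberately avoid deciding whether $\mathcal{D}=\mathcal{J}$ in $\gwi/$ and instead transfer the question to $\In/$: since $\In/$ is finite, $\mathcal{D}$ and $\mathcal{J}$ coincide on $\In/$, so $a\ \mathcal{J}\ b$ implies $a\ \mathcal{D}\ b$, hence $(g;a)\ \mathcal{D}\ (h;b)$ by the previous case, and finally $(g;a)\ \mathcal{J}\ (h;b)$ because $\mathcal{D}\subseteq\mathcal{J}$ always.

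The one genuinely delicate point is $\mathcal{J}$: because $G$ may be infinite, $\gwi/$ need not be finite, so the usual $\mathcal{D}=\mathcal{J}$ shortcut is unavailable inside $\gwi/$ itself, and it is the finiteness of $\In/$ that rescues the argument. The remaining obstacle is merely clerical, namely verifying the two idempotent formulas: one must track which coordinates vanish and how the operation $g\mapsto g_a$ interacts with $\dom/(a)$ and $\im/(a)$, but nothing beyond the definitions is needed.
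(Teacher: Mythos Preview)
Your proof is correct and follows essentially the same route as the paper: the paper's entire argument is the single observation $(g^{-1};aa^{-1})(g;a)=(1_{aa^{-1}};a)=(g;a)(g_{a^{-1}}^{-1};a^{-1}a)$, which amounts to your computation that the idempotents $(g;a)(g;a)^{-1}$ and $(g;a)^{-1}(g;a)$ depend only on $a$. Your treatment of $\mathcal{J}$, routing through finiteness of $\In/$ to avoid assuming $\mathcal{D}=\mathcal{J}$ in $\gwi/$, is more careful than the paper, which simply declares the result ``clear'' without separating the cases.
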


As a consequence two-sided ideals of $\gwi/$ are inherited from ideals in $\In/$; for each $0\leq m\leq n$ the set 
$$I_m=\{(g;a)\in \gwi/\mid \rk/(a)\leq m\}$$
is an ideal of $\gwi/,$ and these are all the ideals of $\gwi/.$

\subsection{Congruences on Inverse Semigroups}

\begin{definition}
If $S$ is an inverse semigroup with semilattice of idempotents $E$ then the centralizer of $E$ is
$$E\zeta=\{ a\in S\ |\ \forall e\in E,\ ea=ae\}.$$ 
\end{definition}

For $\gwi/$ we know that the idempotents are elements of the form $(1_e;e)$ for some idempotent $e\in \In/.$  An elementary calculation gives that for $\gwi/$ the centralizer of the idempotents is the set
$$E\zeta= \{ (g;e)\in\gwi/\ |\  e\in E(\In/)\}.$$

An important family of congruences on any inverse semigroup are the idempotent separating congruences, that is, those that have at most one idempotent in each equivalence class, or equivalently those contained in Green's $\mathcal{H}$ relation \cite{Munn idem sep}. We write $\mathfrak{C}_{IS}(S)$ for the set of idempotent separating congruences, and recall that these form a sublattice of the $\mathfrak{C}(S)$ so in particular there are maximum and minimum such congruences: the minimum is the trivial relation which we write as $\iota,$ and the maximum we write as $\mu.$  This is a well studied class of congruences (see, for example, \cite{congs inv 2}) and for inverse semigroups such congruences are determined by their kernel, where by the kernel of a congruence $\rho$ we mean 
$$\ker/(\rho)=\{a\mid \exists e\in E(S) \text{ with } a\ \rho\ e\}.$$

\begin{definition}
Let $S$ be an inverse semigroup and $T\subseteq S$ a subsemigroup.  Say $T$ is {\em full} if $E\subseteq T,$ and say $T$ is {\em self conjugate} if for each $a\in S$ we have $aTa^{-1}\subseteq T.$  A subsemigroup $T$ is {\em normal} if $T$ is full, self conjugate, and inverse.
\end{definition}

\begin{theorem}[{\cite[Proposition~5.14]{congs inv 2}}]\label{prelim idem sep thm}
Let $S$ be an inverse semigroup.  The lattice of idempotent separating congruences on $S$ is isomorphic to the lattice of normal subsemigroups of $S$ contained in $E\zeta.$  
The following maps are mutually inverse lattice isomorphisms:
\begin{gather*}
T\mapsto \chi_T=\{ (a,b)\ |\ a^{-1}a=b^{-1}b,\ ab^{-1}\in T\},\\
\chi\mapsto \ker/(\chi) = \bigcup_{e\in E} e\rho.
\end{gather*}
\end{theorem}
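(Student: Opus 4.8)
The plan is to show that the two displayed maps are well defined, inclusion preserving, and mutually inverse; a bijective order isomorphism between lattices is automatically a lattice isomorphism. I would lean on two facts throughout. First, a congruence on an inverse semigroup is idempotent separating exactly when it is contained in $\mathcal{H}$, so the whole argument is really about controlling $\mathcal{H}$-classes. Second, I would record at the outset the small lemma that each $t\in E\zeta$ satisfies $tt^{-1}=t^{-1}t$: since $E\zeta$ is closed under inverses and $t$ commutes with the idempotents $tt^{-1}$ and $t^{-1}t$, a short manipulation forces these two idempotents to coincide. This lemma is precisely what couples the two defining conditions of $\chi_T$.

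Given a normal subsemigroup $T\subseteq E\zeta$, I would verify that $\chi_T$ is an idempotent separating congruence. Reflexivity uses fullness ($aa^{-1}\in E\subseteq T$), symmetry uses that $T$ is inverse together with $(ab^{-1})^{-1}=ba^{-1}$, and transitivity uses the identity $ac^{-1}=(ab^{-1})(bc^{-1})$, which holds once $a^{-1}a=b^{-1}b$. For compatibility the image-side conditions are easy: for the left translate $(ca)(cb)^{-1}=c(ab^{-1})c^{-1}\in T$ by self conjugacy, and for the right translate $(ac)(bc)^{-1}=a(cc^{-1})b^{-1}=(ab^{-1})\bigl(b(cc^{-1})b^{-1}\bigr)\in T$ by fullness, while the domain-side condition for the right translate is immediate from $a^{-1}a=b^{-1}b$. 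The one genuinely delicate point is the domain-side condition for the left translate, namely $a^{-1}(c^{-1}c)a=b^{-1}(c^{-1}c)b$; here I would set $t=ab^{-1}\in E\zeta$, note $a=tb$, slide $t$ and $t^{-1}$ past the idempotent $c^{-1}c$, and collapse using $t^{-1}t=tt^{-1}=bb^{-1}$. The same manipulation shows $aa^{-1}=bb^{-1}$ for every $(a,b)\in\chi_T$, so $\chi_T\subseteq\mathcal{H}$ and the congruence is idempotent separating.

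Conversely, for an idempotent separating congruence $\chi$ I would check that $\ker/(\chi)$ is a normal subsemigroup contained in $E\zeta$. Fullness, closure under inverses, self conjugacy, and closure under products are all immediate from $\chi$ being a congruence; for instance $a\ \chi\ e$ and $b\ \chi\ f$ give $ab\ \chi\ ef\in E$. I expect the containment $\ker/(\chi)\subseteq E\zeta$ to be the main obstacle. For $a\ \chi\ e$ with $e\in E$, idempotent separation gives $a\ \mathcal{H}\ e$, hence $a^{-1}a=aa^{-1}=e$. To see that $a$ commutes with an arbitrary idempotent $f$, I would pass to $g=ef=fe$, using $a=ae=ea$ to rewrite $af=ag$ and $fa=ga$; then $af\ \chi\ ef=fe\ \chi\ fa$ yields $ag\ \chi\ ga$, and a second appeal to $\chi\subseteq\mathcal{H}$ forces $ag\ \mathcal{H}\ ga$, which pins down $aga^{-1}=g$, i.e. $af=fa$. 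This structural second use of idempotent separation is the crux of the whole theorem.

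It remains to check that the maps invert one another. For $\ker/(\chi_T)=T$: a pair $(a,e)\in\chi_T$ with $e\in E$ forces $e=a^{-1}a$ and $a=ae\in T$, so $\ker/(\chi_T)\subseteq T$; and any $a\in T$ has $aa^{-1}=a^{-1}a=:e\in E$ with $(a,e)\in\chi_T$, giving the reverse inclusion. For $\chi_{\ker/(\chi)}=\chi$: if $(a,b)\in\chi$ then $a^{-1}a=b^{-1}b$ and $ab^{-1}\ \chi\ bb^{-1}\in E$, so $ab^{-1}\in\ker/(\chi)$ and $(a,b)\in\chi_{\ker/(\chi)}$; conversely $(a,b)\in\chi_{\ker/(\chi)}$ gives $ab^{-1}\ \chi\ aa^{-1}=bb^{-1}$, and right multiplication by $b$ collapses this to $a\ \chi\ b$. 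Both maps visibly preserve inclusion, so as mutually inverse order isomorphisms they are the claimed lattice isomorphisms.
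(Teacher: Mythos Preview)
The paper does not supply its own proof of this theorem: it is stated with a citation to Petrich \cite[Proposition~5.14]{congs inv 2} and used as a black box. Your argument is correct and is essentially the standard one found in the literature, so there is nothing to compare against here beyond noting that your write-up would serve as a self-contained justification where the paper offers none.
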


The problem of describing the lattice of idempotent separating congruences on $S$ then becomes that of describing the lattice of normal subsemigroups of $S$ contained in $E\zeta.$  Lattices of subsemigroups of inverse semigroups are a well studied topic, see, for example, \cite{jones subsemis}. 

\section{The Congruence Decomposition}

Congruences on $\In/$ are well understood. Choose $1\leq k\leq n$ and $N\trianglelefteq \mathcal{S}_k.$  For each $a,b$ with $\rk/(a)=k=\rk/(b)$ and $a\ \mathcal{H}\ b,$ there is $\mu\in \mathcal{S}_k$ such that $a,b$ have the following form:
\[ a= \begin{pmatrix}
a_1 & a_2 & \dots & a_k\\
c_1 & c_2 & \dots & c_k
\end{pmatrix},\ \ \
b= \begin{pmatrix}
a_1 & a_2 & \dots & a_k\\
c_{1\mu} & c_{2\mu} & \dots & c_{k\mu}
\end{pmatrix}.
\]
Then define $\rho(k,N)$ as follows:
\begin{enumerate}[label=$\bullet$]
\item $a\ \rho(k,N)\ b$ for $a,b$ with $\rk/(a),\rk/(b)<k;$
\item for $\rk/(a)=k=\rk/(b),$ $a\ \rho(k,N)\ b$ if $a\ \mathcal{H}\ b$ and $\mu\in N;$
\item $a\ \rho(k,N)\ a$ for all $a.$ 
\end{enumerate}

\begin{theorem}[see \cite{Liber congs I_n}]
Let $k\leq n$ and $N\trianglelefteq \mathcal{S}_k.$  Then $\rho(k,N)$ is a congruence on $\In/.$  Moreover, every congruence on $\In/$ is of this form.
\end{theorem}

Our objective is to extend the description of congruences on $\In/$ to $\gwi/.$  Notice that we can embed $\In/$ into $\gwi/$ via the map $\eta:\In/\hookrightarrow\gwi/$; where $a\mapsto (1_{aa^{-1}};a).$  If we have $\kappa\subseteq \In/\times \In/$ then we write $\kappa\eta=\{(a\eta,b\eta)\ |\ (a,b)\in\kappa\}.$ 

\begin{lemma}
Let $\kappa$ be an equivalence relation on $\In/,$ and let $\zeta_\kappa$ be the equivalence relation on $\gwi/$ generated by $\kappa\eta.$  Then the restriction of $\zeta_\kappa$ to the $\im/(\eta)$ is $\kappa\eta.$  Moreover, if $\kappa$ is a congruence on $\In/$ and $\zeta_\kappa$ is the congruence on $\gwi/$ generated by $\kappa\eta$ then the restriction of  $\zeta_\kappa$ to $\im/(\eta)$ is $\kappa\eta.$
\end{lemma}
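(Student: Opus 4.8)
The plan is to use the projection $\pi\colon\gwi/\rightarrow\In/$ given by $(g;a)\mapsto a$, which retracts $\gwi/$ onto the copy of $\In/$ sitting as $\im/(\eta)$. From the multiplication rule $(g;a)(h;b)=(gh_a;ab)$ it is immediate that $\pi$ is a homomorphism (it is the map $\theta\mapsto a_\theta$ transported across $\Phi$), and from $a\eta=(1_{aa^{-1}};a)$ we have $\eta\pi=\mathrm{id}_{\In/}$, so $\eta$ is a section of $\pi$. This single observation drives both halves of the lemma.

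For the first assertion I would form the pullback
$$R=\{((g;a),(h;b))\in\gwi/\times\gwi/\mid a\ \kappa\ b\}=\pi^{-1}(\kappa).$$
Since $\kappa$ is an equivalence relation and $\pi$ is a function, $R$ is an equivalence relation on $\gwi/$, and it contains $\kappa\eta$: if $(a,b)\in\kappa$ then $(a\eta)\pi=a$ and $(b\eta)\pi=b$ are $\kappa$-related, so $(a\eta,b\eta)\in R$. As $\zeta_\kappa$ is the least equivalence relation containing $\kappa\eta$, it follows that $\zeta_\kappa\subseteq R$. Finally, using $\eta\pi=\mathrm{id}$ one computes the restriction of $R$ to the image of $\eta$: for all $a,b\in\In/$,
$$(a\eta,b\eta)\in R\iff a\ \kappa\ b\iff (a\eta,b\eta)\in\kappa\eta,$$
so $R\cap(\im/(\eta)\times\im/(\eta))=\kappa\eta$. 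Combining this with $\zeta_\kappa\subseteq R$ shows the restriction of $\zeta_\kappa$ to $\im/(\eta)$ is contained in $\kappa\eta$; the reverse containment is immediate since $\kappa\eta\subseteq\zeta_\kappa$ and $\kappa\eta\subseteq\im/(\eta)\times\im/(\eta)$ by construction. Hence equality.

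The second assertion follows by the same argument, the only additional point being that when $\kappa$ is a congruence the pullback $R=\pi^{-1}(\kappa)$ is itself a congruence: for $(s,t)\in R$ and $u\in\gwi/$ the homomorphism property gives $(su)\pi=(s\pi)(u\pi)\ \kappa\ (t\pi)(u\pi)=(tu)\pi$, so $(su,tu)\in R$, and symmetrically on the left. Taking $\zeta_\kappa$ to be the least congruence containing $\kappa\eta$ again yields $\zeta_\kappa\subseteq R$, and the restriction computation is identical. There is no serious obstacle here: the entire content lies in exhibiting the splitting $\pi$, after which one must only be careful to invoke the correct universal property (least equivalence relation versus least congruence) and to check that $R$ is of the matching type in each case.
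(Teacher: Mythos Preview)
Your proof is correct and follows essentially the same approach as the paper: your pullback $R=\pi^{-1}(\kappa)$ is exactly the relation the paper calls $\xi_\kappa$, and both arguments sandwich $\zeta_\kappa$ between $\kappa\eta$ and this larger relation, then observe that the latter restricts to $\kappa\eta$ on $\im/(\eta)$. Your write-up is slightly more explicit in naming the retraction $\pi$ and invoking the universal property, but the content is the same.
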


\begin{proof}
We notice that the relation $\xi_\kappa$ on $\gwi/$ defined by 
$$\xi_\kappa=\{ ((g;a),(h;b))\ |\ (a,b)\in \kappa\}$$
is an equivalence relation on $\gwi/.$  Moreover $\kappa\eta\subseteq \xi_\kappa,$ and $\xi_\kappa|_{\im/(\eta)}= \kappa\eta.$  This completes the proof of the first claim. Also when $\kappa$ is a congruence then $\xi_\kappa$ is a congruence.
\end{proof}

Conversely if $\rho$ is a congruence on $\gwi/$ then we can restrict this to a congruence on $\In/$ in two different ways:
\begin{gather*}
\rho_1=\{ (a,b)\ |\ (1_a,a)\ \rho\ (1_b,b)\},\quad 
\rho_2=\{ (a,b)\ |\ \exists g,h\in (G^0)^n \text{ with } (g;a)\ \rho\ (h;b)\}.  
\end{gather*}
Though both are congruences on $\In/,$ in general these are not equal. However it is immediate that $\rho_1\subseteq \rho_2.$

For each ideal $I_m$ of $\gwi/$ we write
$$I_m^\star=\{ ((g;a),(h,b))\mid (g;a),(h,b)\in I_m\}\cup \iota$$
for the Rees congruence on $\gwi/.$  For a congruence $\rho$ on $\gwi/$ it is clear that there is a largest natural number $0\leq k\leq n$ such that $I_k^\star\subseteq \rho,$ which is referred to as the {\em rank} of the congruence. The relation $\rho$ then induces a non-universal congruence on the principal factor $I_{k+1}/I_k.$ Conversely if we are given $\sigma$ a relation on $I_{m}/I_{m-1}^\star$ we can define a relation on $D_m\times D_m:$
$$\overline{\sigma}= \{(a,b)\in D_{m}\times D_{m}\ |\ (a/I^\star_{m-1},b/I^\star_{m-1})\in \sigma \}.$$
Either by reproducing the usual treatment of congruences on $\In/$ adapted for $\gwi/$ or by appealing to general results of Lima \cite{Lima thesis} we can describe congruences on $\gwi/$ as follows.

\begin{theorem}[{\cite[Theorem~3.2.6]{Lima thesis}}]\label{lima}
Let $\chi$ be an idempotent separating congruence, $0\leq m\leq n$ and $\sigma$ be a non universal congruence on $I_{m}/I_{m-1}^\star$ such that $\chi\cap (D_{m}\times D_{m})\subseteq \overline{\sigma}.$  Then
$$\rho(m,\sigma,\chi)=I_{m-1}^\star  \cup \overline{\sigma} \cup\chi$$
is a congruence on $\gwi/.$

Conversely, if $\rho$ is a congruence on $\gwi/$ then with $\chi=\rho\cap\mu,$ $m=\rk/(\rho)+1$ and $\sigma$ chosen such that $\overline{\sigma}=\rho\cap (D_{m}\times D_{m}),$ then $\rho=\rho(m,\sigma,\chi).$
\end{theorem}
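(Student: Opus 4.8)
The plan is to prove the two assertions separately, using throughout that Green's relations on $\gwi/$ are inherited from $\In/$, that the idempotents are exactly the elements $(1_e;e)$, and that by Theorem~\ref{prelim idem sep thm} the maximum idempotent separating congruence $\mu$ corresponds to the normal subsemigroup $E\zeta=\{(g;e)\mid e\in E(\In/)\}$; an elementary check against this description shows that $((g;a),(h;b))\in\mu$ precisely when $a=b$.

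For the forward direction I would first verify that $\rho(m,\sigma,\chi)=I_{m-1}^\star\cup\overline{\sigma}\cup\chi$ is an equivalence relation. Reflexivity and symmetry are immediate since each constituent has these properties and $\iota\subseteq I_{m-1}^\star$. Transitivity is handled by a short case analysis on the rank of the middle term, using that $I_{m-1}^\star$ is the identity off $I_{m-1}$, that $\overline\sigma\subseteq D_m\times D_m$, and that $\chi\subseteq\mu\subseteq\mathcal H$ preserves rank; the only interacting case is a chain $x\mathrel{\overline\sigma}y\mathrel{\chi}z$ inside $D_m$, where the hypothesis $\chi\cap(D_m\times D_m)\subseteq\overline\sigma$ turns the $\chi$-step into an $\overline\sigma$-step and transitivity of $\sigma$ closes the gap. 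For compatibility, $I_{m-1}^\star$ and $\chi$ are already congruences, while for $\overline\sigma$ any product of a related pair either remains in $D_m$, where $\sigma$ being a congruence on the principal factor $I_m/I_{m-1}^\star$ applies, or drops into $I_{m-1}$, where it is absorbed by $I_{m-1}^\star$.

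For the converse I would check that $\chi=\rho\cap\mu$, the induced $\sigma$, and $m=\rk/(\rho)+1$ satisfy the hypotheses above: $\chi$ is idempotent separating as a subcongruence of $\mu$; since $I_{m-1}^\star\subseteq\rho$ the relation $\rho$ descends to a congruence $\sigma$ on $I_m/I_{m-1}^\star$, non-universal because $I_m^\star\not\subseteq\rho$; and $\chi\cap(D_m\times D_m)\subseteq\rho\cap(D_m\times D_m)=\overline\sigma$. The inclusion $\rho(m,\sigma,\chi)\subseteq\rho$ is then immediate as each piece lies in $\rho$, so the content is the reverse inclusion. Its engine is a single \emph{collapse lemma}: if $\rho$ identifies an idempotent $e$ with a strictly smaller one $f<e$, $\rk/(e)=r$, then in the Brandt principal factor $I_r/I_{r-1}$ the nonzero idempotent $e$ is identified with $0$, and a congruence on a $0$-simple semigroup relating a nonzero element to $0$ is universal, so $I_r^\star\subseteq\rho$. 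Given $(x,y)\in\rho$, applying this to the meet of the $\rho$-related idempotents $xx^{-1}$ and $yy^{-1}$ shows that any pair of unequal rank forces $I_r^\star\subseteq\rho$ for $r$ the larger rank, whence $\rk/(\rho)\geq r$; as $\rk/(\rho)=m-1$ this puts every unequal-rank pair inside $I_{m-1}^\star$. Equal-rank pairs of rank below $m$ lie in $I_{m-1}^\star$, and those of rank $m$ lie in $\overline\sigma$, both by definition.

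The main obstacle is the top stratum: pairs $(x,y)\in\rho$ of equal rank $r>m$, which must be shown to lie in $\chi$, i.e. in $\mu$. The collapse lemma first forces $x\mathrel{\mathcal H}y$, since distinct domain or range idempotents of rank $r$ would meet to a smaller idempotent and collapse $I_r^\star$, contradicting $r>m-1=\rk/(\rho)$. Writing $x=(g;a)$ and $y=(h;b)$ with $a\mathrel{\mathcal H}b$, the description of $\mu$ reduces the claim to upgrading $a\mathrel{\mathcal H}b$ to $a=b$, which I would prove by contradiction. If $a\neq b$ then $y^{-1}x\mathrel\rho y^{-1}y$ with $y^{-1}x$ having nontrivial $\In/$-part $c=b^{-1}a$, a permutation of $\dom/(a)$ moving some point $i$ to $i'\neq i$. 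Multiplying this $\rho$-related pair on the left by the idempotent on $\dom/(a)\setminus\{i\}$ yields two rank-$(r-1)$ elements whose range idempotents are the distinct partial identities on $\dom/(a)\setminus\{i\}$ and $\dom/(a)\setminus\{i'\}$; these are $\rho$-related, so their meet, of rank $r-2$, is $\rho$-related to one of rank $r-1$, and the collapse lemma gives $I_{r-1}^\star\subseteq\rho$, hence $\rk/(\rho)\geq r-1\geq m$, the desired contradiction. This localization-and-rank-drop step is the delicate computational heart of the argument; everything else is bookkeeping organized by rank.
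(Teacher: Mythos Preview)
The paper does not actually supply a proof of this theorem: it is quoted as \cite[Theorem~3.2.6]{Lima thesis} and the surrounding text merely remarks that one may ``reproduce the usual treatment of congruences on $\In/$ adapted for $\gwi/$'' or appeal to Lima's general results. So there is nothing in the paper to compare your argument against line by line.

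That said, your proposal is a faithful execution of exactly the ``usual treatment'' the paper alludes to, and it is essentially correct. The forward direction is routine once one notes (as you do implicitly) that a non-universal congruence on a Brandt semigroup is contained in $\mathcal H$, so $\overline\sigma$-related pairs share domain and range idempotents and left/right multiplication can be reduced to a multiplier in $I_m$. For the converse, your ``collapse lemma'' together with the localization-and-rank-drop trick is precisely the standard mechanism used for $\In/$ (Liber) and for partial automorphism monoids of independence algebras (Lima). One small slip: with the paper's left-to-right composition, $y^{-1}x$ has $\In/$-part $b^{-1}a$, which is a permutation of $\im/(a)=\im/(b)$, not of $\dom/(a)$; correspondingly the restricting idempotent should be the partial identity on $\im/(a)\setminus\{i\}$. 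The rest of the argument goes through verbatim after this correction.
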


Consequently, the problem of describing congruences on $\gwi/$ reduces to describing idempotent separating congruences and to describing congruences on the principal factors.  

\begin{remark}
Since the principal factors are Brandt semigroups all non-universal congruences are idempotent separating congruences so it is possible to formulate Theorem \ref{lima} as the decomposition 
$$\rho=I_{m-1}^\star \cup \overline{\zeta}$$
where $\overline{\zeta}$ is the lift of $\zeta$ - a congruence on $(\gwi/) /I_{m-1}^\star$ - to $\gwi/.$  However for each $m$ a congruence on $(\gwi/) /I_{m-1}^\star$ can be decomposed into a congruence on $I_{m}/I_{m-1}^\star$ and the projection of an idempotent separating congruence on $\gwi/$ onto $(\gwi/) /I_{m-1}^\star.$  Thus it is better to go straight for the decomposition given in Theorem \ref{lima}.
\end{remark}

In order to refine the description of congruences on $\gwi/$ from Theorem \ref{lima} we shall frequently appeal to the correspondence between idempotent separating congruences and normal subsemigroups of $\gwi/$ contained in $E\zeta$ detailed in Theorem \ref{prelim idem sep thm}. Define the function 
$$\Omega\colon E\zeta\rightarrow \bigcup_{1\leq m\leq n} G^m$$
to be the map that ignores zero entries and the final - $\In/$ - coordinate.  It is clear from the description of the $E\zeta$ from Section $2$ that the restriction of $\Omega$ to $E\zeta \cap H_e$ is a isomorphism so $E\zeta \cap H_e \cong G^m.$  

Given $h\in G^m$ write $\overline{{}^e h}$ for the unique element of $(G\cup\{0\})^n$ that has $\bar{{}^e h}_i=0$ for $i\notin \dom/(e)$ and $\overline{{}^eh}\Omega=h$. Let $T\subseteq E\zeta$ be a normal subsemigroup of $\gwi/$ and let $e\in E(\In/)$ be an idempotent with $m=\rk/(e)$. Write $T_e$ for $T\cap H_e=\{(g;e)\in T\}.$

\begin{definition}
A subgroup $K\leq G^m$ is (permutation) invariant if for all $\sigma\in \mathcal{S}_m$ we have that
$$(g_1,g_2,\dots,g_m)\in K\iff (g_{1\sigma},g_{2\sigma},\dots,g_{m\sigma})\in K.$$
Write $PI(G,m)$ for the lattice of invariant subgroups of $G^m.$
\end{definition}

\begin{lemma}\label{T normal inv}
Let $T\subseteq E\zeta$ be a normal subsemigroup of $\gwi/.$  For $e,f\in \In/$ with $\rk/(e)=\rk/(f)=m$ we have that $T_e\cong T_f.$  Moreover the group $T_e\Omega\trianglelefteq G^m$ is normal and invariant, and $T_e\Omega=T_f\Omega.$
\end{lemma}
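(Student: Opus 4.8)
The plan is to exploit the self-conjugacy of $T$, realizing both the matching between distinct $\mathcal{H}$-classes and the coordinate permutations of $\mathcal{S}_m$ as conjugations inside $\gwi/$ by suitable lifts of elements of $\In/$ under the embedding $\eta$. Before anything else I would record that each $T_e$ is in fact a subgroup of the group $H_e$: since $T$ is full it contains the idempotent $e=(1_e;e)$, which is the identity of $H_e$; since $T$ is inverse it is closed under the inverse of $\gwi/$, and for $(g;e)\in T_e$ this inverse is $(g^{-1};e)\in H_e$; and $T_e$ is plainly closed under products. Hence $T_e\leq H_e$, and as $\Omega$ restricts to an isomorphism $H_e\to G^m$ we get $T_e\Omega\leq G^m$. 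This reduces all three assertions to statements about the subgroup $T_e\Omega$.

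The key device is the following. Given $e,f$ of rank $m$, pick $a\in\In/$ with $aa^{-1}=e$ and $a^{-1}a=f$, so that $a$ is a bijection $\dom/(e)\to\dom/(f)$; then $\eta(a)=(1_e;a)$ and $\eta(a)^{-1}=\eta(a^{-1})=(1_f;a^{-1})$. A direct computation of the triple product, using the convention $0^{-1}=0$ and the identities $a^{-1}e=a^{-1}$ and $(1_e)_{a^{-1}}=1_f$, shows
$$\eta(a)^{-1}(g;e)\eta(a)=(h;f),\qquad h_i=g_{ia^{-1}},$$
so that conjugation by $\eta(a)$ carries $H_e$ onto $H_f$ and, read off through $\Omega$, permutes the $m$ coordinates according to the bijection $a$. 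Because $T$ is self-conjugate, $\eta(a)^{-1}T_e\eta(a)\subseteq T_f$, and using $a^{-1}$ in place of $a$ one gets $T_f\subseteq\eta(a)^{-1}T_e\eta(a)$; thus conjugation by $\eta(a)$ is an isomorphism $T_e\to T_f$, giving $T_e\cong T_f$.

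The remaining claims are then read off from the same identity by specializing $a$. Conjugating instead by an arbitrary $(k;e)\in H_e$ gives $(k;e)(g;e)(k;e)^{-1}=(kgk^{-1};e)$, whose $\Omega$-image is the coordinatewise conjugate $kgk^{-1}$; self-conjugacy of $T$ forces $kgk^{-1}\in T_e\Omega$, so $T_e\Omega\trianglelefteq G^m$. For invariance I would take $f=e$ and let $a$ range over the permutations of $\dom/(e)$: these realize every $\sigma\in\mathcal{S}_m$, conjugation by $\eta(a)$ permutes the coordinates by $\sigma$, and self-conjugacy keeps the result in $T_e$, which is exactly the invariance condition. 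Finally, for $T_e\Omega=T_f\Omega$ I would choose $a$ to be the order-preserving bijection $\dom/(e)\to\dom/(f)$; writing $\dom/(e)=\{i_1<\dots<i_m\}$ and $\dom/(f)=\{j_1<\dots<j_m\}$ one gets $h_{j_k}=g_{i_k}$, so conjugation by $\eta(a)$ preserves $\Omega$ on the nose and $T_f\Omega=T_e\Omega$ follows from the equality $\eta(a)^{-1}T_e\eta(a)=T_f$ already established.

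The one genuinely delicate step — and the place I would be most careful — is the conjugation computation $\eta(a)^{-1}(g;e)\eta(a)=(h;f)$: the partial-function indexing in $g_a$, the support bookkeeping ensuring $(1_e)_{a^{-1}}=1_f$ and $a^{-1}e=a^{-1}$, and the verification that the undefined (zero) entries land where they should, all have to be handled correctly so that the claimed coordinate permutation genuinely emerges. Once that single identity is nailed down, every other part of the statement is an immediate specialization of it.
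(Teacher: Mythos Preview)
Your argument is correct and follows essentially the same route as the paper: both proofs realize the isomorphism $T_e\to T_f$, the invariance, and the normality via conjugation by lifts $(1_{aa^{-1}};a)$ of suitable $a\in\In/$, with the $\mathcal{S}_m$-action coming from $a\in H_e(\In/)$ and normality from conjugation by $(k;e)$. The only minor difference is in the final step: the paper deduces $T_e\Omega=T_f\Omega$ from invariance (any $a$ gives a permutation of the $\Omega$-image, which then lies in $T_f\Omega$ by invariance), whereas you pick the order-preserving $a$ so that $\Omega$ is literally preserved---a small but pleasant shortcut.
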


\begin{proof}
Let $e,f\in E(\In/)$ with $\rk/(e)=\rk/(f).$  Since this implies that $e\ \mathcal{D}\ f$ we may choose $a\in \In/$ such that $aa^{-1}=f$ and $a^{-1}a=e.$ As $T$ is normal for each $(g,e)\in T_e$
$$(1_{aa^{-1}};a)(g;e)(1_{a^{-1}a};a^{-1})=(g_a;aea^{-1})=(g_a;f)\in T_f.$$
Furthermore, the function 
$$\Psi_a\colon T_e\rightarrow T_f;\ (g;e)\mapsto (g_a;f)$$
is an isomorphism. Indeed, it is clear that $\Psi_a$ is a homomorphism, and if $\Psi_{a^{-1}}\colon (h;f)\rightarrow (h_{a^{-1}};e)$ then 
$$(g;e)\Psi_a\Psi_{a^{-1}} = (g_a;f)\Psi_{a^{-1}} = ((g_{a})_{a^{-1}};e)=(g;e).$$

For $a\in H_e$ the homomorphism $\Psi_a$ acts as an element $\sigma_a\in\mathcal{S}_{\rk/(e)}$ permuting the coordinates of the non-zero entries in the $G^0$-component. Moreover the map $H_e(\In/)\rightarrow \mathcal{S}_{\rk/(e)}$ defined by $a\mapsto \sigma_a$ is surjective. This exactly says that $T_e$ is invariant.

Furthermore for $e,f\in E(\In/)$ and $a\in \In/$ with $aa^{-1}=f$ and $a^{-1}a=e$ if $g\in T_e\Omega$ then it is clear that $(\overline{{}^eg};e)\Psi_a\Omega\in T_f\Omega$ and is equal to $g$ under a permutation of the coordinates.  Then as $T_f$ is invariant we have that $g\in T_f\Omega.$  By symmetry this implies $T_e\Omega=T_f\Omega.$ 

To see $T_e\Omega$ is normal let $(g;e)\in T_e$ then observe for $h\in G^m$ that 
$$((\overline{{}^eh})g(\overline{{}^eh^{-1}});e)=(\overline{{}^eh};e)(g;e)(\overline{{}^eh^{-1}};e).$$
As $T$ is normal we have that $((\overline{{}^eh})g(\overline{{}^eh^{-1}});e)\in T_e,$ so $(\overline{{}^eh})g(\overline{{}^eh^{-1}})\Omega = hgh^{-1}\in T_e\Omega,$ and $T_e\Omega$ is normal.
\end{proof}

To define normal subsemigroup contained in $E\zeta$ it therefore suffices to describe a set of invariant subgroups $\{T_i\trianglelefteq G^i\mid 1\leq i\leq n\},$ and the normal subsemigroup is then:
$$T=\bigcup_{e\in E(\In/)}\{(\overline{{}^eg};e)\ |\ g\in T_{\rk/(e)}\}.$$
We call $\{T_i\trianglelefteq G^i\mid 1\leq i\leq n\}$ the {\em defining groups for $T$}.

We write $\pi_m\colon \bigcup_{m\leq i\leq n} G^i\rightarrow G^{m-1}$ for the projection onto the first $m-1$ coordinates.  We say a set $\{T_i\leq G^i\ |\ 1\leq i\leq n\}$ is {\em closed} if $T_i\pi_i\subseteq T_{i-1}$ for each $2\leq i\leq n.$  Notice that when $T_i$ is invariant the projection onto any equally sized subset of the coordinates gives the same result.

\begin{proposition}\label{idemsep congs}
Let $T\subseteq E\zeta$ be a normal subsemigroup of $\gwi/$ and let $\{T_i\leq G^i\mid 1\leq i\leq n\}$ be the defining groups for $T.$  Then each $T_i$ is an invariant normal subgroup and $\{T_i\mid 1\leq i\leq n\}$ is closed.

Moreover if $\{T_i\trianglelefteq G^i\mid 1\leq i\leq n\}$ is a closed set of invariant normal subgroups then
$$T=\bigcup_{e\in E(\In/)}\{ (\overline{{}^eg}; e)\in \gwi/\mid g\in T_{\rk/(e)}\}.$$
is a normal subsemigroup, $T\subseteq E\zeta$ and $\{T_i\leq G^i\mid 1\leq i\leq n\}$ are the defining groups for $T.$
\end{proposition}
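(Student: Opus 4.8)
The plan is to treat the two directions separately. For the forward direction, observe that the assertion that each defining group $T_i$ is an invariant normal subgroup of $G^i$ is precisely the content of Lemma \ref{T normal inv}: by definition $T_i = T_e\Omega$ for any idempotent $e$ with $\rk/(e)=i$, and Lemma \ref{T normal inv} shows this is well defined, normal and invariant. Hence the only genuinely new content is closedness, namely $T_i\pi_i \subseteq T_{i-1}$ for $2 \le i \le n$. To prove this I would fix $g \in T_i$ and choose the idempotent $e$ of rank $i$ with domain $\{1,\dots,i\}$, so that $(\overline{{}^e g};e) \in T_e \subseteq T$. Letting $f$ be the idempotent of rank $i-1$ with domain $\{1,\dots,i-1\}$, fullness of $T$ gives $(1_f;f) \in T$, and a direct computation of the product $(\overline{{}^e g};e)(1_f;f)$ in $\gwi/$ yields $(\overline{{}^f (g\pi_i)};f)$, i.e. the $i$-th coordinate is killed while the rest survive unchanged. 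Since $T$ is a subsemigroup this product lies in $T_f$, so applying $\Omega$ gives $g\pi_i \in T_f\Omega = T_{i-1}$, as required.

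For the converse, start with a closed family of invariant normal subgroups and let $T$ be the displayed union. Three of the required properties are immediate: every element of $T$ has an idempotent $\In/$-coordinate, so $T \subseteq E\zeta$; $T$ is full because each $T_{\rk/(e)}$, being a subgroup, contains the identity tuple, whence $(1_e;e) \in T$; and $T$ is closed under inverses because $(\overline{{}^e g};e)^{-1} = (\overline{{}^e (g^{-1})};e)$ with $g^{-1} \in T_{\rk/(e)}$. It is also clear from the construction that $T_e\Omega = T_{\rk/(e)}$, so the given $T_i$ really are the defining groups of $T$.

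The substance of the converse is showing that $T$ is a subsemigroup and is self conjugate; this is the step I expect to be the main obstacle, as both reduce to the same coordinate bookkeeping. For a product $(\overline{{}^e g};e)(\overline{{}^f h};f)$ the $\In/$-coordinate is the partial identity on $\dom/(e)\cap\dom/(f)$, and a computation shows that the surviving $G$-entries form the coordinatewise product of the projections of $g$ and of $h$ onto the coordinate set $\dom/(e)\cap\dom/(f)$. Closedness, iterated and combined with invariance so that projection onto any subset of coordinates of the appropriate size lands in the lower $T_j$, places both projections in $T_{m''}$ with $m'' = \rk/(ef)$; since $T_{m''}$ is a subgroup their product lies there too, so the product is again in $T$. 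For self conjugacy I would compute $(h;b)(\overline{{}^e g};e)(h;b)^{-1}$ for arbitrary $(h;b)\in\gwi/$: the $\In/$-coordinate is the idempotent $beb^{-1}$, of rank $m' = |\dom/(e)\cap\im/(b)| \le \rk/(e)$, and the nonzero $G$-entries work out to be conjugates $h_i g_{s(i)} h_i^{-1}$ of the surviving coordinates of $g$. Invariance and iterated closedness push the relevant sub-tuple of $g$ down into $T_{m'}$, and normality of $T_{m'}$ absorbs the conjugation, giving that the conjugate lies in $T$. The only delicate point throughout is keeping careful track of which coordinates survive each operation and in what order, so that invariance can be invoked to match the indexing used in the definition of $T$; the algebraic closure then follows mechanically from the closed, invariant, normal hypotheses on the $T_i$.
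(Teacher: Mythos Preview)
Your proposal is correct and follows essentially the same route as the paper: the forward direction cites Lemma~\ref{T normal inv} and then multiplies by a smaller idempotent to get closedness, and the converse checks full, inverse, subsemigroup, and self conjugate in turn using closedness, invariance, and normality of the $T_i$. The one organizational difference is in the self-conjugacy step: you compute $(h;b)(\overline{{}^e g};e)(h;b)^{-1}$ directly and read off the conjugated sub-tuple, whereas the paper factorises $(h;b)=(h;bb^{-1})(1_{bb^{-1}};b)$ so that conjugation by $(1_{bb^{-1}};b)$ isolates the use of invariance (and the already-established subsemigroup property absorbs any rank drop), and conjugation by $(h;bb^{-1})$ isolates the use of normality. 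Both arguments are valid; the paper's decomposition avoids having to track the permutation $s$ and the rank drop simultaneously, while yours is a single computation at the cost of slightly heavier bookkeeping.
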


\begin{proof}
Recall $\Omega\colon E\zeta \rightarrow \bigcup_{1\leq m\leq n} G^m,$ the function that ignores zero entries in the $(G^0)^n$ component.

Suppose that $T\subseteq E\zeta$ is a normal subsemigroup. By Lemma \ref{T normal inv} we have that each $T_i$ is a normal invariant subgroup. If $e\in E(\In/)$ has domain $\{x_1<x_2<\dots<x_r\}$ then let $f$ be the idempotent with domain $\{x_1<\dots<x_{r-1}\}$ then $g\pi_m=(\overline{{}^eg};e)(1_f;f)\Omega,$ so $g\pi_m \in T_{r-1},$ so $\{T_i\mid 1\leq i\leq n\}$ is closed. 

For the converse, suppose that $\{T_i\mid 1\leq i\leq n\}$ is a closed set of invariant normal subgroups. To see that $T$ is a subsemigroup let $(\overline{{}^eg};e),(\overline{{}^fh};f)\in T$ and observe that
$$(\overline{{}^eg};e)(\overline{{}^fh};f) = ((\overline{{}^eg})(\overline{{}^fh})_e;ef) = ((\overline{{}^eg})1_{ef}(\overline{{}^fh})1_{ef};ef)=((\overline{{}^eg})1_{ef};ef)((\overline{{}^fh})1_{ef};ef).$$
As each $T_i$ is invariant (so the projection onto equally sized subsets has the same image) it is clear that $((\overline{{}^eg})1_{ef};ef)\Omega\in T_{\rk/(e)}\pi_{\rk/(ef)},$ and $((\overline{{}^fh})1_{ef};ef)\Omega\in T_{\rk/(f)}\pi_{\rk/(ef)}.$ As the set of subgroups is closed $T_{\rk/(e)}\pi_{\rk/(ef)},\ T_{\rk/(f)}\pi_{\rk/(ef)}\subseteq T_{\rk/(ef)},$ so $((\overline{{}^eg})1_{ef}(\overline{{}^fh})1_{ef};ef)\Omega\in T_{\rk/(ef)}.$  Also 
$$(\overline{{}^eg})1_{ef}(\overline{{}^fh})1_{ef}=\overline{{}^{ef}(((\overline{{}^eg})1_{ef}(\overline{{}^fh})1_{ef};ef)\Omega)},$$
hence $(\overline{{}^eg};e)(\overline{{}^fh};f)\in T$

It is immediate that $T$ is both full and inverse. To see that $T$ is self conjugate we note that $(g;a)\in \gwi/$ decomposes as $(g;a)=(g;aa^{-1})(1_{aa^{-1}};a).$ Then
$$(g;a)(\bar{{}^fh};f)(g;a)^{-1} = (g;aa^{-1})(1_{aa^{-1}};a)(\bar{{}^fh};f)(1_{a^{-1}a};a^{-1})(g^{-1};a^{-1}a).$$
That $T$ is closed under conjugation by elements of the form $(1_{aa^{-1}};a)$ follows from each $T_i$ being invariant, and closure under conjugation by $(g;aa^{-1})$ follows as each $T_i$ is normal.
\end{proof}

We have shown that to define an idempotent separating congruence it is sufficient to give a closed set of invariant groups $\{T_i\trianglelefteq G^i\mid 1\leq i\leq n\},$ and the idempotent separating congruence can then be expressed explicitly as
$$\chi(T_1,T_2,\dots,T_n) = \{((g;a),(h;a))\ |\ a\in \In/,\ \rk/(a)=i, \text{ and } (h^{-1}g)\Omega\in T_i \}.$$
Furthermore the ordering on idempotent separating congruences coincides with the ordering on closed sets of invariant normal subgroups induced by subgroup inclusion in each degree; that is $\chi(T_1,\dots,T_n)\subseteq \chi(K_1,\dots,K_n)$ if and only if $T_i\subseteq K_i$ for each $1\leq i\leq n.$

\begin{corollary}
The maximum idempotent separating congruence on $\gwi/$ is 
$$\chi(G,G^2,\dots,G^n)=\{((g;a)(h;a))\in \gwi/\times\gwi/\mid a\in \In/ \}.$$
\end{corollary}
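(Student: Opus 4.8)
The plan is to read the maximum idempotent separating congruence straight off the correspondence set up immediately before the statement: by Proposition \ref{idemsep congs} the idempotent separating congruences on $\gwi/$ are parametrized by closed sets of invariant normal subgroups $\{T_i\trianglelefteq G^i\mid 1\leq i\leq n\}$, and this parametrization is order preserving, with $\chi(T_1,\dots,T_n)\subseteq\chi(K_1,\dots,K_n)$ precisely when $T_i\subseteq K_i$ for every $i$. Since every defining group satisfies $T_i\leq G^i$, the componentwise largest admissible choice in each degree is $T_i=G^i$, so I expect the maximum congruence $\mu$ to have defining groups $\{G,G^2,\dots,G^n\}$, and the bulk of the argument is just confirming this family is admissible and verifying the displayed formula.

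First I would check that $\{G^i\mid 1\leq i\leq n\}$ is a closed set of invariant normal subgroups, so that Proposition \ref{idemsep congs} applies. Each $G^i$ is normal in itself and is trivially invariant, since permuting coordinates of an element of $G^i$ never leaves $G^i$. Closedness is equally immediate: $G^i\pi_i=G^{i-1}$, so in particular $G^i\pi_i\subseteq G^{i-1}$ for each $2\leq i\leq n$. Hence, by Proposition \ref{idemsep congs}, these groups are the defining groups of a normal subsemigroup $T\subseteq E\zeta$, and the associated relation $\chi(G,G^2,\dots,G^n)$ is a genuine idempotent separating congruence.

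Next I would establish maximality. Let $\rho$ be an arbitrary idempotent separating congruence; by the correspondence it equals $\chi(T_1,\dots,T_n)$ for some defining groups $T_i\leq G^i$. Then $T_i\subseteq G^i$ for every $i$, and the order-preserving property of the parametrization recorded above yields $\rho=\chi(T_1,\dots,T_n)\subseteq\chi(G,\dots,G^n)$. Thus $\chi(G,G^2,\dots,G^n)$ dominates every idempotent separating congruence and so is the maximum such congruence $\mu$.

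Finally I would unwind the explicit formula to recover the stated set. Substituting $T_i=G^i$ into the description $\chi(T_1,\dots,T_n)=\{((g;a),(h;a))\mid \rk/(a)=i,\ (h^{-1}g)\Omega\in T_i\}$, the side condition becomes vacuous: whenever $(g;a),(h;a)\in\gwi/$ the labels $g,h$ have support exactly $\dom/(a)$, so $(h^{-1}g)\Omega$ is a well-defined element of $G^i$ with $i=\rk/(a)$, and $G^i=T_i$ contains it automatically. Hence the relation consists of all pairs $((g;a),(h;a))$ sharing a common $\In/$-component $a$, which is exactly the set displayed in the statement. There is no real obstacle in this argument; the only point deserving a moment's care is this last verification that the membership condition is vacuous, which rests on the support conventions built into the definition of $\gwi/$.
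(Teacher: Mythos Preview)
Your argument is correct and is exactly the reasoning the paper intends: the corollary is stated without proof because it follows immediately from Proposition \ref{idemsep congs} together with the explicit description of $\chi(T_1,\dots,T_n)$ and the ordering observation that precedes it. You have simply filled in those details carefully, and there is nothing to add or correct.
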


The next stage is to describe non universal congruences on $I_m/I_{m-1}^*.$  Each principal factor is a Brandt semigroup, so to describe congruences on the principal factors we appeal to work on describing congruences on Brandt semigroups due to Preston \cite{preston1959congruences}.

\begin{theorem}[see \cite{preston1959congruences}]
Let $S$ be a Brandt semigroup.  Then the lattice of non-universal congruences on $S$ is isomorphic to the lattice of normal subgroups of the principal group.
\end{theorem}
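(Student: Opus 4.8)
The plan is to realize the abstract Brandt semigroup concretely as $B = B(G,I)$, whose nonzero elements are triples $(i,g,j)$ with $i,j\in I$ and $g\in G$, multiplied by $(i,g,j)(k,h,l)=(i,gh,l)$ when $j=k$ and $=0$ otherwise, where $G$ is the principal group and the nonzero idempotents are the $(i,1,i)$. I would first build the map in one direction: to each $N\trianglelefteq G$ associate the relation $\rho_N$ that isolates $0$ and sets $(i,g,j)\ \rho_N\ (k,h,l)$ exactly when $i=k$, $j=l$ and $g^{-1}h\in N$. Checking that $\rho_N$ is a congruence is a short computation whose only nontrivial point is that right multiplication by $(j,a,l)$ carries $g^{-1}h$ to $a^{-1}(g^{-1}h)a$, which lies in $N$ precisely because $N$ is normal; products that collapse to $0$ are harmless since $0$ is isolated. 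This assignment is visibly order-preserving, and it is injective because $N$ is recovered from $\rho_N$ as the $\rho_N$-class of the identity inside a fixed principal group $\{(i_0,g,i_0)\mid g\in G\}$.

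The substance of the argument is surjectivity: every non-universal congruence $\rho$ equals some $\rho_N$. First I would show that $0$ lies in a singleton $\rho$-class. If instead $(i,g,j)\ \rho\ 0$ for some nonzero element, then sandwiching between arbitrary $(k,a,i)$ and $(j,b,l)$ yields $(k,agb,l)\ \rho\ 0$; letting $a,b$ range over $G$ and $k,l$ over $I$ forces every triple to be $\rho$-related to $0$, so $\rho$ is universal, against hypothesis. Next I would show that $\rho$ confines nonzero elements to a single $\mathcal{H}$-class: from $(i,g,j)\ \rho\ (k,h,l)$, left multiplication by the idempotent $(i,1,i)$ gives $(i,g,j)\ \rho\ (i,1,i)(k,h,l)$, and if $i\neq k$ the right-hand side is $0$, contradicting that $0$ is isolated; hence $i=k$, and symmetrically right multiplication by $(j,1,j)$ forces $j=l$.

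With these two facts in hand, I would fix $i_0\in I$ and set $N=\{g\in G\mid (i_0,g,i_0)\ \rho\ (i_0,1,i_0)\}$. Since $\{(i_0,g,i_0)\mid g\in G\}$ is a subgroup of $B$ isomorphic to $G$, the restriction of $\rho$ to it is a group congruence, so $N$ is a normal subgroup of $G$ and governs that $\mathcal{H}$-class exactly, that is, $(i_0,g,i_0)\ \rho\ (i_0,h,i_0)\iff g^{-1}h\in N$. The remaining task — the technical heart — is to transport this single normal subgroup consistently across every other $\mathcal{H}$-class. The mechanism is the factorization $(i,g,j)=(i,1,i_0)(i_0,g,i_0)(i_0,1,j)$ together with its reverse $(i_0,1,i)(i,g,j)(j,1,i_0)=(i_0,g,i_0)$. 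Applying $\rho$-compatibility to the first identity gives $\rho_N\subseteq\rho$, and to the second gives $\rho\subseteq\rho_N$, whence $\rho=\rho_N$.

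Finally, because $N\mapsto\rho_N$ is an order-preserving bijection from $\mathfrak{N}(G)$ onto the non-universal congruences with order-preserving inverse (from $\rho_N\subseteq\rho_M$ one reads off $N\subseteq M$ by testing the identity coset), it is a lattice isomorphism, which is exactly the claim. The step I expect to be the main obstacle is precisely this transport argument: one must be sure that the normal subgroup read off from one fixed $\mathcal{H}$-class truly dictates the behaviour of $\rho$ everywhere, and it is the clean factorization of an arbitrary triple through the idempotent $(i_0,1,i_0)$ that makes both inclusions fall out.
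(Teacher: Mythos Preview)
Your argument is correct and is essentially the classical proof due to Preston: realize the Brandt semigroup as a Rees matrix semigroup $B(G,I)$, show that in any non-universal congruence $0$ is isolated and $\rho\subseteq\mathcal{H}$, read off a normal subgroup $N\trianglelefteq G$ from one group $\mathcal{H}$-class, and transport it to all others via the factorization $(i,g,j)=(i,1,i_0)(i_0,g,i_0)(i_0,1,j)$. The paper does not supply its own proof of this statement; it simply quotes the result with a reference to \cite{preston1959congruences}, so there is nothing in the paper to compare against beyond noting that your proof is the standard one the citation points to.
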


The principal group for the principal factor $I_m/I_{m-1}^*$ is the usual group wreath product of $G$ with the symmetric group $\mathcal{S}_m.$  This is the semidirect product $G^m\rtimes \mathcal{S}_m$ under the action of $\mathcal{S}_m$ on the coordinates of $G^m;$ we write this $G\wr\mathcal{S}_m$

\begin{corollary}
Let $1\leq m\leq n,$ then the lattice of non universal congruences on $I_{m}/I_{m-1}$ is isomorphic to the lattice of normal subgroups of $G\wr\mathcal{S}_m.$ 
\end{corollary}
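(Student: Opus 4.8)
The corollary is an immediate consequence of the two results that precede it, so the proof is a matter of identifying the principal group of the principal factor $I_m/I_{m-1}^\star$ and invoking Preston's theorem. The strategy is: first, observe that by the earlier lemma on Green's relations, the Green's structure of $\gwi/$ is inherited from $\In/$. Since $\In/$ is a finite inverse semigroup whose principal factors are Brandt semigroups, the same holds for $\gwi/$: each $I_m/I_{m-1}^\star$ is a Brandt semigroup. Then I would apply the cited theorem of Preston, which reduces the description of non-universal congruences on a Brandt semigroup to the lattice of normal subgroups of its (maximal subgroup, i.e.) principal group.

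**Identifying the principal group.**
The substantive step is to verify that the maximal subgroup $H_e$ of $\gwi/$ at an idempotent $e$ of rank $m$ is isomorphic to the wreath product $G\wr\mathcal{S}_m = G^m\rtimes\mathcal{S}_m$. I would fix an idempotent $e=(1_e;e)$ with $\rk/(e)=m$ and describe its $\mathcal{H}$-class explicitly. By the Green's relations lemma, $(g;a)\,\mathcal{H}\,e$ precisely when $a\,\mathcal{H}^{\In/}\,e$, i.e. $a$ is a permutation of the $m$-element set $\dom/(e)$, so $a$ contributes a factor of $\mathcal{S}_m$; and the constraint $g_i\neq 0\iff i\in\dom/(a)=\dom/(e)$ means the $G^0$-component is a choice of $m$ nonzero group elements, contributing a factor of $G^m$. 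Using the multiplication rule $(g;a)(h;b)=(gh_a;ab)$, the product twists the $G^m$-component by the permutation action encoded in $a$ exactly as in the wreath product, so $H_e\cong G\wr\mathcal{S}_m$. This semidirect-product structure is already implicit in the computation of $\sigma_a$ in the proof of Lemma \ref{T normal inv}.

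**The main obstacle.**
There is no genuine obstacle; the work is bookkeeping to confirm that the multiplication on $H_e$ matches the semidirect product $G^m\rtimes\mathcal{S}_m$ rather than some other extension. The one point demanding a little care is the direction of the action: one must check that conjugating (or multiplying) permutes the coordinates of $G^m$ in the manner claimed for $G\wr\mathcal{S}_m$, rather than by its inverse, and that the adjoined zeros of the Brandt structure (the elements of rank $<m$ collapsed in $I_{m-1}^\star$) play the role of the zero of the Brandt semigroup. Once $H_e\cong G\wr\mathcal{S}_m$ is established, the corollary follows by feeding $S=I_m/I_{m-1}^\star$ and its principal group $G\wr\mathcal{S}_m$ into Preston's theorem, and it only remains to remark that this lattice is independent of the choice of rank-$m$ idempotent $e$, which is guaranteed by the $\mathcal{D}$-equivalence of all such idempotents.
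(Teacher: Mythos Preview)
Your proposal is correct and matches the paper's approach exactly: the paper treats the corollary as immediate from Preston's theorem once one observes that the principal group of the Brandt semigroup $I_m/I_{m-1}^\star$ is $G\wr\mathcal{S}_m$, and you have simply spelled out the verification of $H_e\cong G\wr\mathcal{S}_m$ that the paper leaves implicit. The paper does not provide a separate proof of this corollary at all, stating only in the surrounding text that the principal group is $G^m\rtimes\mathcal{S}_m$; your expanded argument is a faithful elaboration of that one-line justification.
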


For a normal subgroup $L\trianglelefteq G\wr\mathcal{S}_m$ write $\sigma_L$ for the corresponding congruence on $I_m/I_{m-1}^*.$ For each $e\in E(\gwi/)$ with $\rk/(e)=m$ let $\Psi_e:H_e\rightarrow G\wr\mathcal{S}_m$ be the isomorphism $(g;a)\mapsto (g\Omega,a\gamma)$ where $\Omega$ is the function that ignores zero entries and $\gamma$ is the usual isomorphism $H_e(\In/)\rightarrow \mathcal{S}_m.$  Define $L_e=\{ (g,a)\in H_e\mid (g;a)\Psi_e \in L\},$ 
$$\sigma_L= \{(u,v)\in I_m/I_{m-1}^*\times I_m/I_{m-1}^*\mid u\ \mathcal{H}\ v,\ u^{-1}v\in L_{u^{-1}u}\}.$$
It is easily seen that the requirement in Theorem \ref{lima} that $\chi\cap(D_m\times D_m)\subseteq \overline{\sigma}$ is equivalent to $\{(t,1)\mid t\in T_{m}\}\subseteq L.$  We can now give a refinement of the description of two sided congruences on $\gwi/.$ 

\begin{theorem}\label{cong decomp}
Let $m\leq n,$ $\{T_i\trianglelefteq G^i\ |\ m+1\leq i\leq n \}$ be a closed set of invariant subgroups, and $L\leq G\wr\mathcal{S}_m$ be such that $\{(t,1)\mid t\in T_{m+1}\pi_{m+1}\}\leq L.$  Then 
$$\rho(m,\{T_i\},L)=I_{m-1}^\star\cup \overline{\sigma_L}\cup \chi(\{G,G^2,\dots,T_{m+1}\pi_{m+1},T_{m+1},\dots,T_n)$$
is a congruence on $\gwi/.$

Moreover all congruences on $\gwi/$ are of this form.
\end{theorem}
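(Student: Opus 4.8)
The plan is to assemble the congruence $\rho(m,\{T_i\},L)$ from the general decomposition in Theorem~\ref{lima} by making concrete the three constituent pieces. Recall that Theorem~\ref{lima} tells us a congruence is determined by a rank $m-1$ (so $I_{m-1}^\star\subseteq\rho$), a non-universal congruence $\sigma$ on the principal factor $I_m/I_{m-1}^\star$, and an idempotent separating congruence $\chi$, subject to the compatibility condition $\chi\cap(D_m\times D_m)\subseteq\overline{\sigma}$. My job is to verify that the specific $\sigma_L$ and $\chi(\dots)$ assembled from the data $(\{T_i\},L)$ satisfy the hypotheses of Theorem~\ref{lima}, and conversely that every congruence arises this way.

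\medskip
\noindent\textbf{The forward direction.}
First I would fix the idempotent separating piece. The ranks above $m$ contribute the closed invariant family $\{T_i\mid m+1\leq i\leq n\}$, but to have a genuine idempotent separating congruence defined on \emph{all} of $\gwi/$ via Proposition~\ref{idemsep congs} I must supply invariant normal subgroups in \emph{every} degree $1\leq i\leq n$ forming a closed set. For degrees $m+1$ through $n$ these are the given $T_i$; for degree $m$ I take $T_{m+1}\pi_{m+1}$, and for degrees below $m$ I take the full groups $G,G^2,\dots$ (this is forced since $I_{m-1}^\star\subseteq\rho$ collapses everything of rank $\leq m-1$, so the idempotent separating part must be maximal there). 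I would check this enlarged family $\{G,\dots,G^{m-1},T_{m+1}\pi_{m+1},T_{m+1},\dots,T_n\}$ is closed: the only new conditions are $G^{m-1}=G^{m}\pi_m\supseteq(T_{m+1}\pi_{m+1})\pi_m$, which is automatic, and $(T_{m+1}\pi_{m+1})\pi_{m}\subseteq$ the degree $m-1$ group $G^{m-1}$, again automatic, together with the already-assumed closure of $\{T_i\}_{i\geq m+1}$ and the fact that $T_{m+2}\pi_{m+2}\subseteq T_{m+1}$ forces nothing new at level $m$ since $T_{m+1}\pi_{m+1}$ is defined directly. Invariance and normality of $T_{m+1}\pi_{m+1}$ follow because projecting an invariant normal subgroup onto coordinates gives an invariant normal subgroup (and by invariance the choice of which coordinate to drop is immaterial). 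Hence $\chi:=\chi(G,\dots,T_{m+1}\pi_{m+1},T_{m+1},\dots,T_n)$ is a legitimate idempotent separating congruence by Proposition~\ref{idemsep congs}. Next, $L\leq G\wr\mathcal{S}_m$ gives $\sigma_L$ on the principal factor via the Brandt semigroup correspondence (the Corollary following Preston's theorem). Finally I verify the compatibility hypothesis of Theorem~\ref{lima}: we are told $\{(t,1)\mid t\in T_{m+1}\pi_{m+1}\}\leq L$, and the excerpt already records that this containment is \emph{equivalent} to $\chi\cap(D_m\times D_m)\subseteq\overline{\sigma_L}$ (here the relevant degree-$m$ defining group of $\chi$ is exactly $T_{m+1}\pi_{m+1}$). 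With all three hypotheses of Theorem~\ref{lima} met, $\rho(m,\{T_i\},L)=I_{m-1}^\star\cup\overline{\sigma_L}\cup\chi$ is a congruence.

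\medskip
\noindent\textbf{The converse.}
Let $\rho$ be an arbitrary congruence on $\gwi/$. By Theorem~\ref{lima}, writing $\chi=\rho\cap\mu$ for the idempotent separating part and $m=\rk/(\rho)+1$, and choosing $\sigma$ with $\overline{\sigma}=\rho\cap(D_m\times D_m)$, we have $\rho=I_{m-1}^\star\cup\overline{\sigma}\cup\chi$. I then translate each abstract piece into the parametrizing data. The idempotent separating congruence $\chi$ corresponds by Proposition~\ref{idemsep congs} to a closed family of invariant normal subgroups $\{K_i\mid 1\leq i\leq n\}$; since $I_{m-1}^\star\subseteq\rho$ forces $\chi$ to be maximal on ranks below $m$, I get $K_i=G^i$ for $i\leq m-1$. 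Set $T_i:=K_i$ for $i\geq m+1$; these inherit invariance, normality, and closure, so $\{T_i\mid m+1\leq i\leq n\}$ is a closed invariant family. The Brandt congruence $\sigma$ corresponds by the Preston corollary to a normal $L\trianglelefteq G\wr\mathcal{S}_m$ with $\sigma=\sigma_L$. It then remains to identify $K_m$ with $T_{m+1}\pi_{m+1}$ and to confirm $\{(t,1)\mid t\in T_{m+1}\pi_{m+1}\}\leq L$. The latter is again the compatibility condition $\chi\cap(D_m\times D_m)\subseteq\overline{\sigma}$ supplied by Theorem~\ref{lima}, now read in reverse. The equality $K_m=T_{m+1}\pi_{m+1}$ is where the main subtlety lies and is the step I expect to be the chief obstacle: a priori $K_m$ and $K_{m+1}=T_{m+1}$ are only linked by closure, which gives $K_{m+1}\pi_{m+1}=T_{m+1}\pi_{m+1}\subseteq K_m$, not equality. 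I would argue the reverse inclusion by exploiting that $m=\rk/(\rho)+1$ is the \emph{smallest} rank on which $\rho$ is non-universal together with the normality of $\rho$: an element of rank $m$ that is $\chi$-related to an idempotent can be multiplied down to rank $m-1$, where everything collapses, so the degree-$m$ defining group of $\chi$ can contain nothing beyond what its relation to $D_{m+1}$ and $L$ already prescribes. Making this precise, by tracking how $\rho$ restricts across the $\mathcal{D}$-classes $D_m$ and $D_{m+1}$ and using that $\overline{\sigma_L}$ is pinned by $L$, forces $K_m\subseteq T_{m+1}\pi_{m+1}$ and hence equality. With this identification the data $(\{T_i\}_{i\geq m+1},L)$ reconstitute $\rho$ in the stated normal form, completing the proof.
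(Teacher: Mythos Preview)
Your overall architecture is right and matches the paper's treatment: the theorem is a direct refinement of Theorem~\ref{lima}, with Proposition~\ref{idemsep congs} supplying the idempotent separating piece and the Brandt/Preston correspondence supplying $\sigma_L$. The forward direction is fine as you have it.

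The issue is in the converse, specifically your ``chief obstacle'' $K_m=T_{m+1}\pi_{m+1}$. This equality is \emph{not needed}, and in fact it is \emph{false in general}, so the vague argument you sketch for it cannot be made to work. For a concrete failure take $n=2$, $m=1$, $T_2=\{1\}\trianglelefteq G^2$, and any nontrivial $L\trianglelefteq G\wr\mathcal{S}_1\cong G$. Then $\chi=\rho\cap\mu$ restricted to $D_1$ is exactly $\overline{\sigma_L}\cap\mathcal{H}$, whose defining group is $K_1=L\cap G=L$, while $T_2\pi_2=\{1\}\subsetneq L$.

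The fix is to observe that you do not need $K_m=T_{m+1}\pi_{m+1}$ at all: the union on the right-hand side is insensitive to shrinking $K_m$ down to $T_{m+1}\pi_{m+1}$. Indeed, closure gives $T_{m+1}\pi_{m+1}\subseteq K_m$, so setting $\chi':=\chi(G,\dots,G^{m-1},T_{m+1}\pi_{m+1},T_{m+1},\dots,T_n)$ we have $\chi'\subseteq\chi$, with equality off $D_m$. On $D_m$ the compatibility condition $\chi\cap(D_m\times D_m)\subseteq\overline{\sigma_L}$ from Theorem~\ref{lima} already absorbs the contribution of $\chi$ (and a fortiori of $\chi'$) into $\overline{\sigma_L}$, so
\[
I_{m-1}^\star\cup\overline{\sigma_L}\cup\chi \;=\; I_{m-1}^\star\cup\overline{\sigma_L}\cup\chi'.
\]
The hypothesis $\{(t,1)\mid t\in T_{m+1}\pi_{m+1}\}\leq L$ needed for the forward direction then follows from $T_{m+1}\pi_{m+1}\subseteq K_m$ together with $\{(k,1)\mid k\in K_m\}\leq L$ (which is the compatibility condition again). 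With this correction the converse goes through cleanly.
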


The explicit form for $\rho=\rho(m,\{T_i\},L)$ is: $(g;a)\ \rho\ (h;b)$ if one of the following:
\begin{enumerate}[label=$\bullet$]
    \item $\rk/(a)<m$ and $\rk/(b)<m,$
    \item $\rk/(a)>m,$ $a=b$ and $(g^{-1}h)\Omega\in T_{\rk/(a)},$
    \item $\rk/(a)=m=\rk/(b),$ $a\ \mathcal{H}(\mathcal{I}_n)\ b$ and $((g^{-1}h)_{a^{-1}};a^{-1}b)\in L_{a^{-1}a}.$
\end{enumerate}
It is also worth remarking on the relation between the ordering on congruences and the description in Theorem \ref{cong decomp}.  Let $\rho_1=\rho(m_1,\{T_i\mid m_1+1\leq i\leq n\},L_1)$ and $\rho_2=\rho(m_2,\{U_i\mid m_2+1\leq i\leq n\},L_2).$  Then $\rho_1\subseteq \rho_2$ if and only if $m_1\leq m_2,$ $T_i\leq U_i$ for each $m_2+1\leq i\leq n$ and if $m_1=m_2$ then $L_1\leq L_2$ or if $m_1<m_2$ then $\{(t,1)\mid t\in T_{m_2}\}\subseteq L_2.$  This ordering allows us easily compute the intersection and join of congruences. 

\begin{corollary}\label{congs intersection and meet}
Let $m_1\geq m_2,$ and let $\rho_1=\rho(m_1,\{Y_i\ |\ m_1+1\leq i\leq n\}, Z_1)$ and $\rho_2=\rho(m_2,\{W_i\ |\ m_2+1\leq i\leq n\}, Z_2)$ be congruences on $\gwi/.$  Then
\begin{gather*}
\rho_1\vee\rho_2=\rho(m_1,\{U_i\ |\ m_1+1\leq i\leq n\}, A),\\
\rho_1\cap\rho_2=\rho(m_2,\{V_i\ |\ m_2+1\leq i\leq n\}, B)
\end{gather*}
where $U_i=Y_i\vee W_i$ with this the join in $PI(G,m),$ $V_i=Y_i\cap W_i$ and - taking the join and intersection in $\mathfrak{N}(G\wr\mathcal{S}_{m_1})$ - if $m_1=m_2$ then $A=Z_1\vee Z_2$ and $B=Z_1\cap Z_2,$ or if $m_1 > m_2$ then $A=Z_1\vee W_{m_1}$ and $B=Z_2\cap \{(g;1)\mid g\in G^{m_2} \}.$
\end{corollary}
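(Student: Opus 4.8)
The plan is to use that $\mathfrak{C}(\gwi/)$ is a complete lattice in which the meet of two congruences is their set-theoretic intersection, while the join is the least upper bound with respect to the ordering of decompositions recorded immediately above. I would handle the two operations by different routes: the meet directly as an intersection of relations, read back into canonical form through the uniqueness in \Cref{cong decomp}, and the join purely order-theoretically as a least upper bound, which avoids computing any transitive closure. Throughout I adopt the natural conventions that extend $\rho_1$'s defining data below its rank parameter, namely $Y_i=G^i$ for $m_2<i<m_1$ (where $\rho_1$ acts as the universal relation) and $Y_{m_1}=\{k\in G^{m_1}\mid (k,1)\in Z_1\}$, the identity-permutation part of $Z_1$; with these the formula $V_i=Y_i\cap W_i$ is meaningful across the whole range $m_2<i\le n$.

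For the meet I would compute $\rho_1\cap\rho_2$ directly, partitioning pairs by the rank of their first coordinate. Every element of rank below $m_2$ lies in the bottom ideal of both congruences, so the intersection collapses exactly the elements of rank $<m_2$ and has rank parameter $m_2$. On $D_{m_2}$ the higher congruence $\rho_1$ restricts to the universal relation, so the rank-$m_2$ Brandt component is governed by $\rho_2$ alone and supplies the Brandt datum $B$. At a rank $i$ with $m_2<i<m_1$ the same universality of $\rho_1$ forces $V_i=W_i$, while for $i>m_1$ both congruences are idempotent separating and intersecting the defining groups coordinatewise gives $V_i=Y_i\cap W_i$. The single mixed level is rank $m_1$: there $\rho_1$ is Brandt with datum $Z_1$ and $\rho_2$ is idempotent separating with datum $W_{m_1}$, and imposing both conditions (so that the permutation part is forced trivial) leaves $V_{m_1}=W_{m_1}\cap Y_{m_1}$. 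It then remains to check that $\{V_i\}$ is a closed family of invariant normal subgroups and that the compatibility hypothesis of \Cref{cong decomp} holds; closedness descends from that of $\{Y_i\}$ and $\{W_i\}$ because projection sends an intersection into the intersection of projections, and the compatibility inequality for $B$ follows from the one already satisfied by $\rho_2$.

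For the join I would show that the proposed $\rho(m_1,\{U_i\},A)$ is the least common upper bound using only the stated ordering. Since $\rho_1$ has the larger rank parameter, any common upper bound has rank parameter at least $m_1$, and the minimal choice $m_1$ makes the join inherit the full collapse below that level. Checking that the candidate contains $\rho_1$ and $\rho_2$ reduces, through the ordering conditions, to the coordinatewise inequalities $Y_i\le U_i$ and $W_i\le U_i$ for $i>m_1$, which force $U_i=Y_i\vee W_i$, together with the two boundary requirements $Z_1\le A$ (from $\rho_1$ at its own Brandt rank) and $\{(w,1)\mid w\in W_{m_1}\}\le A$ (the condition attached to $\rho_2$ when the rank parameter strictly increases); the least $A$ meeting both is $Z_1\vee W_{m_1}$, with $W_{m_1}$ read as the trivial-permutation subgroup of $G\wr\mathcal{S}_{m_1}$. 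Minimality is then automatic, since any common upper bound satisfies the very same component inequalities and hence contains the candidate, and validity of the candidate follows from closedness of $\{W_i\}$ via $W_{m_1+1}\pi_{m_1+1}\le W_{m_1}$. The equal-rank case $m_1=m_2$ is the degenerate specialization where the crossing level is a common Brandt rank, so the two Brandt data simply combine by join and by intersection in $\mathfrak{N}(G\wr\mathcal{S}_{m_1})$, giving $A=Z_1\vee Z_2$ and $B=Z_1\cap Z_2$.

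I expect the main obstacle to be the bookkeeping at the crossing rank $m_1$ (and, for the meet, at rank $m_2$), where one congruence is Brandt and the other idempotent separating. Making this precise needs the dictionary furnished by Preston's description of congruences on a Brandt semigroup together with the isomorphisms $\Psi_e$ and $\Omega$: an idempotent-separating congruence carrying an invariant group $K\le G^{m_1}$ corresponds to the Brandt congruence of the trivial-permutation subgroup $K\times\{1\}\trianglelefteq G\wr\mathcal{S}_{m_1}$, and dually the identity-permutation part of a normal subgroup of $G\wr\mathcal{S}_{m_1}$ must be shown to be again invariant and normal, so that the join and intersection taken in $\mathfrak{N}(G\wr\mathcal{S}_{m_1})$ restrict correctly to the invariant-subgroup lattices $PI(G,i)$. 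Once this interface is set up, the remaining verifications are routine checks that the ordering conditions hold coordinatewise and that the assembled parameters define genuine congruences.
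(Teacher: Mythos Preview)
Your approach is exactly what the paper intends: the corollary is stated without proof, immediately after the description of the ordering on congruences in terms of their decomposition data, and is meant to be read off from that ordering. Your plan---direct intersection for the meet, and least-upper-bound via the ordering for the join---is the natural execution of that remark, and the bookkeeping you outline at the crossing ranks is accurate.

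One point worth flagging. Your own analysis of the meet when $m_1>m_2$ says that on $D_{m_2}$ the congruence $\rho_1$ is universal, so the Brandt datum is ``governed by $\rho_2$ alone''. That is correct, and it yields $B=Z_2$, not $B=Z_2\cap\{(g;1)\mid g\in G^{m_2}\}$ as printed in the statement. Since every element of rank $m_2<m_1$ lies in $I_{m_1-1}$, the Rees part of $\rho_1$ identifies all of $D_{m_2}$, and intersecting with $\overline{\sigma_{Z_2}}$ simply returns $\overline{\sigma_{Z_2}}$. So your argument is right and the displayed formula for $B$ in the unequal-rank case appears to be a slip in the paper; do not contort your proof to match it.
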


A natural question is what is the relation between $\mathfrak{C}(\gwi/)$ and $\mathfrak{C}(\gwip/).$ We define
$$\Theta: \gwi/\rightarrow \gwip/;\ \ (g_1,\dots,g_n; a)\rightarrow (g_1,\dots,g_n,0; a)$$
where in the image we regard $a$ as an element of $\mathcal{I}_{n+1}.$  It is straightforward that this is an embedding. For a relation $\kappa\subseteq \gwi/\times \gwi/$ we write 
$$\kappa\Theta_2=\{((g;a)\Theta,(h;b)\Theta)\ |\ ((g;a),(h;b))\in \kappa \} \subseteq \gwip/\times\gwip/.$$
The following is straightforward consequence of Theorem \ref{cong decomp}.  

\begin{corollary}
Let $\rho(m,\{T_i\ |\ m+1\leq i\leq n\},L)$ be a congruence on $\gwi/.$  Then $\langle \rho\Theta_2\rangle = \rho(m,\{T_i\ |\ m+1\leq i\leq n+1\},Z),$ where $T_{n+1}$ is the trivial group. 

Moreover, for $\rho$ a congruence on $\gwi/$ we have that $\langle \rho\Theta_2\rangle\cap (\gwi/\Theta\times \gwi/\Theta)=\rho\Theta.$
\end{corollary}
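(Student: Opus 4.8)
The goal splits into two claims. First, that the map $\Theta$ carries the congruence $\rho = \rho(m,\{T_i \mid m+1\leq i\leq n\},L)$ on $\gwi/$ to the generated congruence $\langle\rho\Theta_2\rangle$ on $\gwip/$ with the stated parameters. Second, that restricting this generated congruence back to the image $\gwi/\Theta$ recovers exactly $\rho\Theta$. The strategy is to use the classification in \cref{cong decomp} directly: a congruence on $\gwip/$ is determined by its rank, its closed family of invariant normal subgroups, and its principal-factor subgroup $L$, so it suffices to identify these three data for $\langle\rho\Theta_2\rangle$ and check they agree with the claimed decomposition.

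\textbf{Step 1 (the generated congruence contains the claimed one).}
First I would verify that the explicit congruence $\rho' = \rho(m,\{T_i \mid m+1\leq i\leq n+1\},Z)$, with $T_{n+1}$ trivial, is itself a congruence on $\gwip/$; this is immediate from \cref{cong decomp} once one checks the closure condition $T_{n+1}\pi_{n+1}\subseteq T_n$ (automatic, as the trivial group projects into anything) and the compatibility $\{(t,1)\mid t\in T_{m+1}\pi_{m+1}\}\leq Z$, where $Z$ is whatever $L$-datum one wants. The natural choice is to take $Z = L$ viewed inside $G\wr\mathcal{S}_m$ (the $\mathcal{S}_m$-part is unaffected by adjoining an $(n+1)$-th coordinate). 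I would then show $\rho\Theta_2\subseteq \rho'$ by inspecting the three bullet cases in the explicit form of $\rho'$: a pair $((g;a)\Theta,(h;b)\Theta)$ lies in $\rho$ iff the corresponding untagged pair lies in $\rho$, and applying $\Theta$ only appends a zero in the final coordinate, which does not affect rank, the $\mathcal{H}$-relation in $\mathcal{I}_{n+1}$, or the $\Omega$-image (since $\Omega$ ignores zero entries). Hence $\langle\rho\Theta_2\rangle\subseteq\rho'$.

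\textbf{Step 2 (the reverse inclusion / identifying the exact parameters).}
The converse, $\rho'\subseteq\langle\rho\Theta_2\rangle$, is where I expect the real work. Here I would argue that the three data of $\langle\rho\Theta_2\rangle$ cannot be strictly smaller than those of $\rho'$. The rank is forced: since $\rho$ has rank $m$, the relation $\rho\Theta_2$ already identifies all pairs below rank $m$ inside the image, and closure of the generated congruence under multiplication by idempotents of all ranks $\leq m$ in $\gwip/$ forces the full Rees congruence $I_{m-1}^\star$. For the invariant subgroups $T_i$ with $i\leq n$, the point is that an element of $G^i$ realized in $\gwi/$ is still realized after applying $\Theta$ (on an idempotent of rank $i$ whose domain avoids $n+1$), so the defining group in degree $i$ is at least $T_i$; that it is no larger follows from Step 1. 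The genuinely new degree is $i=n+1$: I must show the defining group there is trivial, which holds because the only rank-$(n+1)$ idempotent in $\gwip/$ has full domain including $n+1$, but every element of $\rho\Theta_2$ has a zero in coordinate $n+1$, so no nontrivial label can appear in that coordinate among generators, and conjugation/multiplication cannot create one. The subtlety to handle carefully is that generating a congruence can produce consequences via products $(g;a)\Theta\cdot s$ for arbitrary $s\in\gwip/$ whose domain meets $n+1$; I would check that sliding labels (as in \cref{gwimult}) through the zero in coordinate $n+1$ only ever annihilates, never propagates, a label there.

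\textbf{Step 3 (the restriction identity).}
For the final sentence, I would compute $\langle\rho\Theta_2\rangle\cap(\gwi/\Theta\times\gwi/\Theta)$ using the now-identified parameters of $\rho' = \langle\rho\Theta_2\rangle$. An element of $\gwi/\Theta$ is precisely one with zero in coordinate $n+1$, so restricting $\rho'$ to such pairs simply drops the $n+1$ coordinate from consideration; the degree-$(n+1)$ datum (trivial group) then plays no role, and the remaining data $(m,\{T_i\mid m+1\leq i\leq n\},L)$ match those of $\rho$. Since $\Theta$ is an embedding (stated in the excerpt), the restricted relation on $\gwi/\Theta$ is the image under $\Theta$ of the relation on $\gwi/$ with exactly these parameters, namely $\rho$ itself, giving $\rho\Theta$. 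The main obstacle throughout is Step 2: ensuring that generating the congruence adds nothing beyond $\rho'$, i.e. that the closure operations available in the larger monoid $\gwip/$ — in particular multiplication by elements whose domain includes the new point $n+1$ — cannot force additional identifications in degrees $\leq n$ or a nontrivial group in degree $n+1$.
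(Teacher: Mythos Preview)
Your approach is correct and is precisely what the paper intends: it offers no proof at all beyond declaring the corollary a ``straightforward consequence'' of \cref{cong decomp}, and your Steps~1--3 simply unpack that consequence by reading off the rank, the closed family $\{T_i\}$, and the principal-factor datum $Z=L$ on both sides. One small organizational wobble: in Step~2 you say you must show the degree-$(n+1)$ defining group is trivial, but that is the \emph{upper} bound already secured by Step~1; for the lower bound $\rho'\subseteq\langle\rho\Theta_2\rangle$ at degree $n+1$ there is nothing to check, and the substantive content of Step~2 lies in the rank argument and in degrees $m+1\le i\le n$ (plus the $L$-datum, which you should mention follows by the same reasoning).
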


This demonstrates that the map $\mathfrak{C}(\gwi/)\rightarrow \mathfrak{C}(\gwip/)$ defined by $\rho\mapsto\rho\Theta_2$ is also an embedding so we may regard the lattice of congruences on $\gwip/$ as an extension of the lattice of congruences on $\gwi/.$

\section{Normal subgroups of direct and semidirect products}

As a congruence on $\gwi/$ is determined by a set of subgroups of $G^i$ for a range of $i,$ and a subgroup of $G\wr\mathcal{S}_m$ it is a sensible next step to develop a picture of what exactly are the sets of these subgroups. In this section we present a finer analysis of the subgroups of $G^m$ and of $G\wr\mathcal{S}_m$ that arise as components in the prior description of congruences on $\gwi/.$ 

\subsection{Invariant normal subgroups of $G^m$}

 The standard starting point in the consideration of subgroups of direct products of groups is Goursat's lemma.

\begin{theorem}[Goursat's Lemma \cite{goursat}]\label{goursats lemma}
Let $A,B$ be groups. Then the subgroups $N\leq A\times B$ are exactly the sets
$$X(A^\prime,B^\prime,C,D,\theta) =\{ (a,b)\in A^\prime\times B^\prime\ |\ aC\theta=bD \},$$
where $C\trianglelefteq A^\prime\leq A,$ $D\trianglelefteq B^\prime\leq B$ and $\theta\colon A^\prime/C\rightarrow B^\prime/D$ is an isomorphism.
\end{theorem}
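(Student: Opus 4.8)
The plan is to prove the two inclusions separately: first that every set of the form $X(A',B',C,D,\theta)$ is a subgroup of $A\times B$, and then that every subgroup of $A\times B$ arises in this way by reading off the data $(A',B',C,D,\theta)$ from it. For the easier direction, given $C\trianglelefteq A'\leq A$, $D\trianglelefteq B'\leq B$ and an isomorphism $\theta\colon A'/C\to B'/D$, I would check directly that $X=X(A',B',C,D,\theta)$ is closed under products and inverses. Since $\theta$ is a homomorphism, if $(a_1,b_1),(a_2,b_2)\in X$ then $(a_1a_2)C\theta=(a_1C\theta)(a_2C\theta)=b_1b_2D$, so $(a_1a_2,b_1b_2)\in X$, and the identity $(1,1)$ and inverses are handled identically. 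This direction is routine.

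For the converse, let $N\leq A\times B$ and define $A'$ and $B'$ to be the projections of $N$ onto the two factors, together with $C=\{a\in A\mid (a,1)\in N\}$ and $D=\{b\in B\mid (1,b)\in N\}$. I would first note that $A'$ and $B'$ are subgroups, being images of the projection homomorphisms, and that $C\trianglelefteq A'$: for $a'\in A'$ choose $b'$ with $(a',b')\in N$, and then conjugating $(c,1)\in N$ yields $(a'ca'^{-1},1)\in N$, so $a'ca'^{-1}\in C$; symmetrically $D\trianglelefteq B'$. The crux is then to produce the isomorphism $\theta$. I would define $\theta\colon A'/C\to B'/D$ by $aC\theta=bD$ whenever $(a,b)\in N$, and verify it is well defined: if $(a,b),(a_1,b_1)\in N$ with $aC=a_1C$, then $(a^{-1}a_1,b^{-1}b_1)\in N$ with $a^{-1}a_1\in C$, so subtracting off $(a^{-1}a_1,1)\in N$ gives $(1,b^{-1}b_1)\in N$ and hence $bD=b_1D$. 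The homomorphism property follows from $(a_1a_2,b_1b_2)\in N$, surjectivity is immediate from the definition of $B'$, and injectivity holds because $aC\theta=D$ forces some $(a,b)\in N$ with $b\in D$, whence $(a,1)\in N$ and $a\in C$.

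Finally I would confirm $N=X(A',B',C,D,\theta)$. The inclusion $N\subseteq X$ is immediate from the definition of $\theta$, while for the reverse inclusion, given $(a,b)$ with $a\in A'$, $b\in B'$ and $aC\theta=bD$, I choose $b_0$ with $(a,b_0)\in N$; then $b_0D=bD$ gives $(1,b_0^{-1}b)\in N$, and therefore $(a,b)=(a,b_0)(1,b_0^{-1}b)\in N$. I do not expect any single step to present a serious obstacle; the only points requiring genuine care are the well-definedness and injectivity of $\theta$, where one repeatedly uses that membership of $(x,1)$ or $(1,y)$ in $N$ encodes precisely the subgroups $C$ and $D$. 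The main bookkeeping concern is keeping the postfix convention for $\theta$ consistent with the coset arithmetic throughout.
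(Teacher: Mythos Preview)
Your proof is correct and is precisely the standard argument for Goursat's Lemma. The paper does not actually supply a proof of this statement: it is quoted as a classical result with a citation to Goursat's original article, so there is no in-paper proof to compare against. Your verification of well-definedness and injectivity of $\theta$ via the kernels $C$ and $D$ is exactly the usual one, and the two inclusions establishing $N=X(A',B',C,D,\theta)$ are handled cleanly.
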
 

The following is an elementary extension of Goursat's lemma which is more applicable to our case.  

\begin{corollary}\label{inv norm 2}
There is a bijective correspondence between invariant normal subgroups of $G^2$ and triples $(A,C,\theta)$ where $A,C\trianglelefteq G,$ $C\leq A$ and $\theta$ is an automorphic involution of $A/C.$  The invariant normal subgroups of $G^2$ are $X(A,A,C,C,\theta)$ for these triples.
\end{corollary}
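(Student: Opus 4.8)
The plan is to specialize Goursat's Lemma (Theorem~\ref{goursats lemma}) to the case $A=B=G$ and then impose, one at a time, the conditions that the subgroup be invariant and that it be normal in $G^2$. By Theorem~\ref{goursats lemma} every $N\le G\times G$ has the form $X(A',B',C,D,\theta)$ with $C\trianglelefteq A'\le G$, $D\trianglelefteq B'\le G$ and $\theta\colon A'/C\to B'/D$ an isomorphism. First I would read off the two projections and the two ``Goursat kernels'': $\pi_1(N)=A'$, $\pi_2(N)=B'$, while $\{a\mid(a,1)\in N\}=C$ and $\{b\mid(1,b)\in N\}=D$. Since the only nontrivial element of $\mathcal{S}_2$ is the transposition, invariance means $(a,b)\in N\iff(b,a)\in N$; applying this to projections and kernels forces $A'=B'=:A$ and $C=D$, so $\theta$ becomes an automorphism of $A/C$. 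The remaining content of invariance is that $aC\theta=bC\iff bC\theta=aC$ for all $a,b\in A$; substituting $bC=aC\theta$ shows this is equivalent to $aC\theta^2=aC$ for all $a$, i.e.\ to $\theta^2=\mathrm{id}$, so $\theta$ is an automorphic involution.

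The substantive step is normality. Conjugating a typical $(a,b)\in N$ by $(x,y)\in G\times G$ gives $(xax^{-1},yby^{-1})$, and requiring this to lie in $N$ for all $x,y$ and all $(a,b)\in N$ first forces $A$ and $C$ to be normal in $G$ (apply conjugation to elements of the form $(a,b)$ and $(c,1)$ respectively). Writing $\gamma_x$ for the automorphism of $A/C$ induced by conjugation by $x$ (well-defined once $A,C\trianglelefteq G$), membership of the conjugate reduces to $\gamma_x(aC)\theta=\gamma_y(bC)$ whenever $aC\theta=bC$. Taking $y=1$ and using injectivity of $\theta$ yields $\gamma_x(aC)=aC$ for all $x\in G$ and $a\in A$; that is, given $A,C\trianglelefteq G$, normality is equivalent to conjugation acting trivially on $A/C$, equivalently $[G,A]\subseteq C$. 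Conversely, when $A,C\trianglelefteq G$ with $[G,A]\subseteq C$ and $\theta^2=\mathrm{id}$, the same computation shows every conjugate stays in $N$ and the swap condition holds, so $X(A,A,C,C,\theta)$ is indeed invariant and normal.

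Finally I would establish the bijection by exhibiting mutually inverse maps: from an invariant normal $N$ recover $(A,C,\theta)$ by $A=\pi_1(N)$, $C=\{a\mid(a,1)\in N\}$ and $\theta\colon aC\mapsto bC$ for any $(a,b)\in N$ (well-defined and an involutive automorphism by the above), and from a triple build $X(A,A,C,C,\theta)$; that these compose to the identity is a formal check, and distinct triples give distinct subgroups since the triple is read off from $N$. The main obstacle, and where all the content sits, is the normality step: one must notice that $A,C\trianglelefteq G$ alone does \emph{not} make $X(A,A,C,C,\theta)$ normal in $G^2$ --- the diagonal $\{(g,g)\mid g\in G\}=X(G,G,1,1,\mathrm{id})$ of a nonabelian $G$ is invariant but not normal --- so the precise correspondence is with those triples for which conjugation by $G$ fixes $A/C$ pointwise, and care is needed to reconcile this with the conditions placed on the triple in the statement.
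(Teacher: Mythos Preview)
Your analysis is correct, and in fact you have identified an omission in the paper's statement of the corollary. The paper provides no proof of this result, labelling it only as ``an elementary extension of Goursat's lemma,'' so there is nothing to compare your argument against directly. Your derivation of the invariance condition ($A'=B'$, $C=D$, $\theta^2=\mathrm{id}$) and your treatment of the normality condition are both sound.

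The substantive point you raise at the end is the important one: the conditions $A,C\trianglelefteq G$, $C\le A$, and $\theta$ an involutive automorphism of $A/C$ are \emph{not} sufficient to guarantee that $X(A,A,C,C,\theta)$ is normal in $G^2$. Your diagonal counterexample is decisive: for $G$ nonabelian, the triple $(G,\{1\},\mathrm{id})$ satisfies every hypothesis in the stated corollary, yet $X(G,G,\{1\},\{1\},\mathrm{id})=\{(g,g)\mid g\in G\}$ is invariant but not normal. The missing condition is precisely $[G,A]\subseteq C$, equivalently that conjugation by $G$ acts trivially on $A/C$. The paper does impose exactly this condition in the $m\ge 3$ analysis (Lemma~\ref{L/N abelian} proves $[G,L]\subseteq N$, and the definition of an $m$-invariant quadruple includes it explicitly), so its omission here appears to be an oversight in the statement rather than a genuine mathematical claim. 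Your proof, read as establishing the corrected bijection with triples $(A,C,\theta)$ satisfying additionally $[G,A]\subseteq C$, is complete and correct.
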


For larger $m$ or for semidirect as opposed to direct products the picture grows much more complicated. General extensions of Goursat's lemma exist (for example see \cite{general goursats}), however for our purposes these are too generic and it is possible to directly produce a description tailored to our purposes.  For the remainder of this section unless otherwise stated we will take $m\geq 3$ and let $K\trianglelefteq G^m$ be an invariant normal subgroup.

Define the following 
$$\mathbf{H}(K) =\{ (g,h)\in G^2 \ |\ (g,h,1,\dots,1)\in K\},\quad \mathbf{N}(K) =\{ n\in G \ |\ (n,1,\dots,1)\in K\},$$
and note that $\mathbf{N}(K) = \{ n\in \mathbf{M}(K)\ |\ (n,1)\in \mathbf{H}(K)\}.$  With $\pi\colon G^m\rightarrow G$ being the projection onto the first coordinate define
$$\mathbf{L}(K)=K\pi,\qquad \mathbf{M}(K)=\mathbf{H}(K)\pi.$$
Since $K$ is normal in $G^m$ it is clear that $\mathbf{N}(K),$ $\mathbf{M}(K)$ and $\mathbf{L}(K)$ are normal subgroups of $G,$ and as $K$ is invariant $\mathbf{H}(K)$ is an invariant normal subgroup of $G^2.$

\begin{lemma}\label{L/N abelian}
The commutator $[G,L]=\{glg^{-1}l^{-1}\mid g\in G,\ l\in L\}$ has $[G,L]\subseteq N,$ in particular the quotient group $L/N$ is abelian.
\end{lemma}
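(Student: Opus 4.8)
The plan is to exhibit, for each $g\in G$ and $l\in L=\mathbf{L}(K)$, an explicit element of $K$ whose only nontrivial coordinate is the commutator $glg^{-1}l^{-1}$; this places that commutator in $N=\mathbf{N}(K)$ and so yields $[G,L]\subseteq N$ directly. I note at the outset that this argument uses only that $K$ is normal in $G^m$, and needs neither invariance nor $m\geq 3$.

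First I would unpack what it means for $l$ to lie in $L$. Since $L=K\pi$ is the projection of $K$ onto the first coordinate, there is a witness
$$\mathbf{k}=(l,a_2,\dots,a_m)\in K$$
for some $a_2,\dots,a_m\in G$. Fix an arbitrary $g\in G$ and consider $(g,1,\dots,1)\in G^m$, which is trivial in every coordinate but the first.

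The central step is conjugation. As $K\trianglelefteq G^m$, normality gives
$$(g,1,\dots,1)\,\mathbf{k}\,(g,1,\dots,1)^{-1}=(glg^{-1},a_2,\dots,a_m)\in K,$$
the point being that conjugating by an element trivial off the first coordinate alters only the first entry while fixing $a_2,\dots,a_m$. Multiplying on the right by $\mathbf{k}^{-1}=(l^{-1},a_2^{-1},\dots,a_m^{-1})\in K$ now cancels the entire tail:
$$(glg^{-1},a_2,\dots,a_m)(l^{-1},a_2^{-1},\dots,a_m^{-1})=(glg^{-1}l^{-1},1,\dots,1)\in K.$$
By the definition of $\mathbf{N}(K)$ this says exactly that $glg^{-1}l^{-1}\in N$. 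As $g$ and $l$ were arbitrary, $[G,L]\subseteq N$.

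For the final assertion I would note $L\leq G$, so $[L,L]\subseteq[G,L]\subseteq N$; since $N\trianglelefteq G$ (already recorded above) it is in particular normal in $L$, and $[L,L]\subseteq N$ is precisely the statement that $L/N$ is abelian. I do not anticipate a genuine obstacle: the only thing requiring care is confirming that the conjugating and cancelling elements are chosen so that exactly the tail coordinates collapse, which the coordinatewise multiplication in $G^m$ makes transparent.
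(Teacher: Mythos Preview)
Your proof is correct and follows essentially the same route as the paper: pick a witness in $K$ with first coordinate $l$, conjugate by $(g,1,\dots,1)$ to alter only the first entry, then multiply by the inverse of the witness to isolate $(glg^{-1}l^{-1},1,\dots,1)\in K$. Your observation that neither invariance nor $m\geq 3$ is needed here is also accurate.
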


\begin{proof}
For each $l\in L$ there is some $k\in K$ with $k=(l,k_2,\dots,k_m).$  As $K$ is normal in $G^m$ for $g\in G$ we obtain that $(glg^{-1},k_2,\dots,k_m)\in K.$  It then follows that $(glg^{-1}l^{-1},1,\dots,1)\in K$ and thus $glg^{-1}l^{-1}\in N.$  Thus we have that $N$ contains the commutator $[G,L].$
\end{proof} 

As $\mathbf{H}(K)$ is an invariant normal subgroup of $G^2$ we may apply Corollary \ref{inv norm 2} to get that there is a automorphic involution $\theta$ of $\mathbf{M}(K)/\mathbf{N}(K)$ such that 
$$\mathbf{H}(K)=\{ (g,h)\ |\ g\mathbf{N}(K)\theta=h\mathbf{N}(K) \}.$$
Suppose that $(g,h,1,\dots,1)\in K,$ then as $K$ is invariant also $(1,h^{-1},g^{-1},1,\dots,1)\in K$ (we here use that $m\geq 3$), hence $(g,1,g^{-1},1,\dots,1)\in K.$  Thus $(g,g^{-1})\in \mathbf{H}(K).$  We then have that $(g\mathbf{N}(K))\theta=g^{-1}\mathbf{N}(K),$ noting that the inverse map is an automorphism since $\mathbf{L}(K)/\mathbf{N}(K)$ (and so also $\mathbf{M}(K)/\mathbf{N}(K)$) is abelian. Therefore
$$\mathbf{H}(K)=\{ (g,h)\in \mathbf{M}(K)^2 |\ hg\in \mathbf{N}(K) \},$$
in particular for $g\in \mathbf{M}(K)$ we have that $(g,g^{-1},1,\dots,1)\in K.$

\begin{lemma}\label{same coset lemma}
Let $K\leq G^m$ be an invariant normal subgroup, and let $M=\mathbf{M}(K)$. If $(g_1,\dots,g_m)\in K,$ then
$$g_1M=g_2M=\dots=g_mM.$$
\end{lemma}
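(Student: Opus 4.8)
The plan is to exploit permutation invariance to reduce the statement to a single pair of coordinates, and then, from a given element of $K$, to manufacture a new element of $K$ that is nontrivial on only two coordinates, directly witnessing membership in $M=\mathbf{M}(K)$. The only real idea required is a ``swap-and-divide'' trick combining an element of $K$ with its coordinate-transposed copy.

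Concretely, suppose $k=(g_1,\dots,g_m)\in K$. Since $K$ is invariant, transposing the first two coordinates shows $k'=(g_2,g_1,g_3,\dots,g_m)\in K$. As $K$ is a subgroup, $k(k')^{-1}\in K$, and computing coordinatewise gives
\[
k(k')^{-1}=(g_1g_2^{-1},\,g_2g_1^{-1},\,1,\dots,1)\in K,
\]
the trailing entries vanishing because $g_ig_i^{-1}=1$ for $i\geq 3$. By the definition of $\mathbf{H}(K)$ this says $(g_1g_2^{-1},g_2g_1^{-1})\in\mathbf{H}(K)$, and projecting onto the first coordinate yields $g_1g_2^{-1}\in\mathbf{H}(K)\pi=\mathbf{M}(K)=M$. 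Since $M$ is normal in $G$, the condition $g_1g_2^{-1}\in M$ is equivalent to $g_1M=g_2M$.

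To conclude, I would invoke invariance once more: for any indices $i,j$, applying a permutation carrying the $i$th and $j$th coordinates into the first two positions and repeating the identical computation gives $g_ig_j^{-1}\in M$, hence $g_iM=g_jM$. Ranging over all pairs delivers $g_1M=g_2M=\dots=g_mM$, as required. I do not expect a genuine obstacle here: once the swap-and-divide step produces an element of the form $(\ast,\ast,1,\dots,1)$, the conclusion is immediate from the definitions of $\mathbf{H}(K)$ and $\mathbf{M}(K)$ already in place. Note that the argument in fact uses only $m\geq 2$, so the standing assumption $m\geq 3$ for the section is not needed for this particular lemma.
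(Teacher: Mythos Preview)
Your proposal is correct and essentially identical to the paper's own proof: both transpose the first two coordinates of $(g_1,\dots,g_m)$ and divide to obtain an element of the form $(\ast,\ast,1,\dots,1)\in K$, then read off membership in $\mathbf{H}(K)$ and hence $M$. The only cosmetic difference is that the paper computes $(k')^{-1}k$ rather than $k(k')^{-1}$, giving $g_1^{-1}g_2\in M$ directly without the appeal to normality of $M$; your remark that only $m\geq 2$ is needed here is also accurate.
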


\begin{proof}
Suppose that $(g_1,\dots,g_m)\in K$ then since $K$ is invariant we have $(g_2,g_1,g_3,\dots,g_m),$ and thus 
$$(g_2,g_1,g_3,\dots,g_m)^{-1}(g_1,g_2,g_3,\dots,g_m) = (g_2^{-1}g_1,g_1^{-1}g_2,1,\dots,1). $$
Hence $(g_2^{-1}g_1,g_1^{-1}g_2)\in \mathbf{H}(K),$ so $g_1^{-1}g_2\in M,$ or equivalently $g_1M=g_2M.$ Similarly we obtain that each $g_i$ is in the same left coset of $M,$ so $g_1M=\dots=g_mM.$  
\end{proof}

For an invariant normal subgroup $K\trianglelefteq G^m$ define the function 
$$\phi_K: \mathbf{L}(K)\rightarrow \mathbf{L}(K)/\mathbf{N}(K);\quad g\mapsto y\mathbf{N}(K)\ \text{ where }\ (y,g,g,\dots,g)\in K.$$

\begin{lemma}\label{phi lemma}
Let $K\trianglelefteq G^m$ be an invariant normal subgroup, and let $L=\mathbf{L}(K),$ $M=\mathbf{M}(K)$ and $N=\mathbf{N}(K).$ The function $\phi=\phi_K$ defined previously is a homomorphism.  The restriction of $\phi$ to $M$ has the form $g\mapsto g^{1-m} N,$ and if $g\phi=hN$ then $gM=hM.$  Moreover $(k_1,\dots,k_m)\in K$ if and only if $k_1M=\dots=k_mM$ and $k_1\phi=k_1^{2-m}k_2\dots k_mN.$
\end{lemma}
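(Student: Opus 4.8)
The plan is to verify each assertion about $\phi = \phi_K$ in turn, relying on the invariance and normality of $K$ together with the structural results already established. First I would check that $\phi$ is well defined: given $g \in L$, I need an element $(y, g, g, \dots, g) \in K$ to exist, and I need the coset $yN$ to be independent of the choice of such a preimage $y$. Existence follows from invariance --- since $g \in L = \mathbf{L}(K)$ there is some $(g, k_2, \dots, k_m) \in K$, and by Lemma \ref{same coset lemma} all the $k_i$ lie in the coset $gM$; invariance lets me permute coordinates and multiply elements of $K$ to manufacture a tuple whose last $m-1$ entries all equal $g$ (adjusting the first entry accordingly). For well-definedness, if $(y, g, \dots, g)$ and $(y', g, \dots, g)$ both lie in $K$, then their quotient is $(y^{-1}y', 1, \dots, 1) \in K$, so $y^{-1}y' \in \mathbf{N}(K) = N$, giving $yN = y'N$.

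Next I would show $\phi$ is a homomorphism. Taking $(y, g, \dots, g)$ and $(z, h, \dots, h)$ in $K$ and multiplying coordinatewise yields $(yz, gh, \dots, gh) \in K$, so $(gh)\phi = yzN = (g\phi)(h\phi)$; commutativity issues do not arise because $L/N$ is abelian by Lemma \ref{L/N abelian}. For the restriction to $M = \mathbf{M}(K)$, recall we established that $(g, g^{-1}, 1, \dots, 1) \in K$ for $g \in M$. By invariance I can produce tuples of the form $(g, 1, \dots, 1, g^{-1}, 1, \dots, 1)$ placing $g^{-1}$ in any coordinate, and multiplying $m-1$ such tuples together (with $g^{-1}$ placed successively in coordinates $2, \dots, m$) gives $(g^{m-1}, g^{-1}, \dots, g^{-1}) \in K$. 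Inverting the last $m-1$ entries via invariance and rescaling, or equivalently taking the inverse tuple, produces an element of $K$ of the form $(g^{1-m}, g, \dots, g)$, whence $g\phi = g^{1-m}N$. That $g\phi = hN$ forces $gM = hM$ follows since $(y, g, \dots, g) \in K$ with $y \in hN$ implies, after left-multiplying by a suitable element, that $yM = gM$ by Lemma \ref{same coset lemma}, and $N \subseteq M$ gives $hM = yM = gM$.

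Finally, for the characterization of membership in $K$, one direction is immediate: if $(k_1, \dots, k_m) \in K$ then $k_1 M = \dots = k_m M$ by Lemma \ref{same coset lemma}, and computing $k_1 \phi$ using the defining tuple together with the relation $(g, g^{-1}, 1, \dots, 1) \in K$ extracts the stated formula $k_1 \phi = k_1^{2-m} k_2 \cdots k_m N$. For the converse I would start from an arbitrary tuple $(k_1, \dots, k_m)$ satisfying both conditions and build it inside $K$: using $k_1 \in L$ pick $(k_1, k_1, \dots, k_1)$-type data from $\phi$, then correct each later coordinate from $k_1$ to $k_i$ by multiplying by elements $(k_1^{-1}k_i, \dots)$, which lie in $\mathbf{H}(K)$-derived tuples precisely because $k_1^{-1} k_i \in M$ (the equal-coset condition) and these differences are controlled by $\mathbf{N}(K)$ via the $\phi$-condition. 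The main obstacle is this converse bookkeeping: I must assemble the target tuple from the generators $(g,g^{-1},1,\dots,1)$ and the $\phi$-witnessing tuples while tracking exactly how the first-coordinate corrections accumulate, and verifying that the $\phi$-equation $k_1\phi = k_1^{2-m}k_2\cdots k_m N$ is precisely the condition making the leftover first-coordinate discrepancy land in $N$. Invariance is what makes each correction step legal, so the care lies in confirming that no residual obstruction outside $N$ survives.
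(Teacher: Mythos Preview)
Your proposal is correct and follows essentially the same route as the paper: well-definedness via Lemma~\ref{same coset lemma} plus the $\mathbf{H}(K)$ elements $(x_2\cdots x_m,x_2^{-1},\dots,x_m^{-1})$, the homomorphism property by direct multiplication, the restriction to $M$ by multiplying the invariance-shifted copies of $(g,g^{-1},1,\dots,1)$ and inverting, and the converse membership direction by starting from the $\phi$-witness $(y,k_1,\dots,k_1)$ and correcting coordinates $2,\dots,m$ with the elements $(k_i^{-1}k_1,1,\dots,k_1^{-1}k_i,\dots,1)\in K$. The paper carries out your ``main obstacle'' bookkeeping explicitly and the first-coordinate corrections in fact telescope exactly (not merely modulo $N$), so no residual adjustment is needed.
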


\begin{proof}
Initially recall that for $x\in M$ we have that $(x,x^{-1})\in \mathbf{H}(K),$ thus for $x_2,\dots,x_m\in M$ we have that $(x_2,x_2^{-1},1,\dots,1),$ $(x_3,1,x_3^{-1},1,\dots,1),$ $\dots,$ $(x_m,1,\dots,1,x_m^{-1})$ are elements of $K.$  Hence $(x_2x_3\dots x_m,x_2^{-1},\dots,x_m^{-1})\in K.$ 

We show now that $\phi$ is well defined. For $g\in L$ there is $k=(g,k_2,\dots,k_m)\in K.$  Then by Lemma \ref{same coset lemma} we have that $gM=k_2M=\dots=k_mM$ so there are $x_2,\dots,x_m\in M$ such that $k_i=gx_i.$ Hence 
$$(g,k_2,\dots,k_m)(x_2\dots x_m,x_2^{-1},\dots,x_m^{-1})=(gx_2\dots x_m,g,\dots,g)\in K.$$
Thus $g\phi=gx_2\dots x_mN.$ Also if $(y,g,\dots,g)\in K$ and $(x,g,\dots,g)\in K$ then it is immediate that $yx^{-1}\in N,$ so $\phi$ is well defined.  

Notice that as $L/N$ is abelian (by Lemma \ref{L/N abelian}) $g(g^{-1}k_2)\dots (g^{-1}k_m)N=g^{2-m}k_2\dots k_mN.$ Therefore we have shown that $(g,k_2,\dots,k_m)\in K$ implies that $g\phi=g^{2-m}k_2\dots k_mN.$  Conversely if $gM=k_2M=\dots=k_mM$ and $g\phi=g^{2-m}k_2\dots k_mN=g(g^{-1}k_2)\dots (g^{-1}k_m)N$ then $(g(g^{-1}k_2)\dots (g^{-1}k_m),g,\dots,g)\in K.$ Since $k_i^{-1}g\in M$ it follows that for each $2\leq i\leq m$ we have $(k_i^{-1}g,1,\dots,g^{-1}k_i,\dots,1)\in K,$ and so
\begin{gather*}
(g(g^{-1}k_2)\dots (g^{-1}k_m),g,\dots,g) (k_m^{-1}g,1,\dots,1,g^{-1}k_m)\dots (k_2^{-1}g,g^{-1}k_2,1,\dots,1)\\
= (g,k_2,\dots,k_m)\in K.
\end{gather*}
Therefore we have shown that $(k_1,\dots,k_m)\in K$ if and only if $k_1M=\dots=k_mM$ and $k_1\phi=k_1^{2-m}k_2\dots k_mN.$

It is straightforward that $\phi$ is a homomorphism, since if $(y,g,\dots,g),$ $(x,h,\dots,h)\in K$ then $(yx,gh,\dots,gh)\in K.$  To see that the restriction of $\phi$ to $M$ is as claimed notice that if $x\in M$ then
$$(x,x,1,\dots,1)(x,1,x,1,\dots,1)\dots(x,1,\dots,1,x)=(x^{m-1},x,\dots,x)\in K.$$
That if $g\phi=hN$ then $gM=hM$ follows immediately from Lemma \ref{same coset lemma}.
\end{proof}

Let $N\leq M\leq L$ be normal subgroups of $G.$ We call a homomorphism $\phi\colon L\rightarrow L/N$ a {\em $(LMNm)$-homomorphism} if the restriction of $\phi$ to $M$ has the form $g\mapsto g^{1-m}N$ and if $g\phi=hN$ implies that $gM=hM.$ Notice that if $\phi$ is an $(LMNm)$-homomorphism then $N\leq \ker/ (\phi).$  We shall now demonstrate that invariant normal subgroups of $G^m$ are determined by suitable quadruples $(L,M,N,\phi).$

\begin{definition}
Let $G$ be a group, then $(L,M,N,\phi)$ is an {\em $m$-invariant quadruple} for $G$ if $L,M,N\trianglelefteq G$ with $N\leq M\leq N$ and $[G,L]\subseteq N,$ and $\phi:L\rightarrow L/N$ is an $(LMNm)$-homomorphism.  
\end{definition}

Given an $m$-invariant quadruple $(L,M,N,\phi)$ we define the following subset of $G^m:$
$$\mathbf{K}_m(L,M,N,\phi)=\{ (g_1,\dots,g_m)\in L^m \ |\ g_1M~\textequal~\dots~\textequal~ g_mM,\ g_1\phi~\textequal~ g_1^{2-m}g_2g_3\dots g_m N\}.$$

\begin{proposition}\label{inv subgroup prop}
Let $K\trianglelefteq G^m$ be an invariant normal subgroup, and let $L=\mathbf{L}(K),$ $M=\mathbf{M}(K)$ and $\mathbf{N}(K).$  Then $(L,M,N,\phi_K)$ is an $m$-invariant quadruple. Furthermore $K=\mathbf{K}_m(L,M,N,\phi_K).$
\end{proposition}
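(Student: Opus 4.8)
The plan is to show that $(L,M,N,\phi_K)$ satisfies the definition of an $m$-invariant quadruple and then establish the set equality $K=\mathbf{K}_m(L,M,N,\phi_K)$.

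First I would verify the quadruple conditions. By the remarks immediately preceding the statement, $L=\mathbf{L}(K)$, $M=\mathbf{M}(K)$ and $N=\mathbf{N}(K)$ are all normal subgroups of $G$, and the inclusions $N\leq M\leq L$ follow directly from their definitions (every generator of $N$ arises from an element of $\mathbf{H}(K)$ hence contributes to $M$, and every element giving rise to $M$ appears as a first coordinate of some element of $K$ hence lies in $L$). The condition $[G,L]\subseteq N$ is precisely Lemma~\ref{L/N abelian}. Finally, Lemma~\ref{phi lemma} already records that $\phi_K$ is a homomorphism whose restriction to $M$ has the form $g\mapsto g^{1-m}N$ and which satisfies the implication $g\phi_K=hN\Rightarrow gM=hM$; this is exactly the assertion that $\phi_K$ is an $(LMNm)$-homomorphism. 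So the first half of the proposition is essentially an assembly of the preceding lemmas with the verification of the elementary inclusions.

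For the set equality, the key observation is that Lemma~\ref{phi lemma} does most of the work for us: its final sentence states that $(k_1,\dots,k_m)\in K$ if and only if $k_1M=\dots=k_mM$ and $k_1\phi_K=k_1^{2-m}k_2\cdots k_mN$. I would then argue that the right-hand condition is exactly membership in $\mathbf{K}_m(L,M,N,\phi_K)$. The only gap to close is the requirement that each $g_i\in L$ in the definition of $\mathbf{K}_m$. For the forward inclusion $K\subseteq\mathbf{K}_m(\dots)$, if $(k_1,\dots,k_m)\in K$ then $k_1\in K\pi=L$ by definition, and since all $k_i$ lie in the same coset of $M\leq L$ we get $k_i\in L$ for all $i$; the two defining equalities then come straight from Lemma~\ref{phi lemma}. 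For the reverse inclusion, given $(g_1,\dots,g_m)\in L^m$ with $g_1M=\dots=g_mM$ and $g_1\phi_K=g_1^{2-m}g_2\cdots g_mN$, the characterization in Lemma~\ref{phi lemma} immediately yields membership in $K$, so the two sets coincide.

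I do not anticipate a serious obstacle here, since the substantive content has been front-loaded into Lemmas~\ref{L/N abelian} and~\ref{phi lemma}. The one point requiring slight care is the consistency of quantifiers in the coset condition: the defining relation for $\mathbf{K}_m$ fixes the normalization around the first coordinate (via the exponent $2-m$ on $g_1$), whereas invariance of $K$ means the condition is symmetric in the coordinates, so I would want to confirm that the first-coordinate-anchored formula in the definition of $\mathbf{K}_m$ is genuinely permutation-invariant as a set — this is guaranteed because $g_1M=\dots=g_mM$ forces all the $g_i^{-1}g_j\in M$, and rewriting $g_1^{2-m}g_2\cdots g_m$ in terms of any other coordinate changes it only by an element of $M$, hence only by an element mapping into the image under $\phi_K$ compatibly. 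This symmetry check is the only place where I would slow down and write out the calculation explicitly rather than cite a prior lemma.
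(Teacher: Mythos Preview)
Your proposal is correct and matches the paper's approach: assemble Lemmas~\ref{L/N abelian} and~\ref{phi lemma} for the quadruple conditions, then invoke the biconditional in Lemma~\ref{phi lemma} for the set equality. Your final paragraph on the permutation invariance of the formula defining $\mathbf{K}_m$ is unnecessary here---that check belongs to Theorem~\ref{inv norm sbgps thm}, not this proposition---though your explicit handling of the $L^m$ condition fills in a detail the paper's proof leaves implicit.
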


\begin{proof}
As noted previously $L,M,N\trianglelefteq G$ and $N\leq M\leq L.$  That $[G,L]\subseteq N$ is exactly Lemma \ref{L/N abelian}, and $\phi_k$ is an $(LMNm)$-homomorphism by Lemma \ref{phi lemma}.

It remains to show that $K=\mathbf{K}_m(L,M,N,\phi_K).$ By Lemma \ref{phi lemma} we have that $(k_1,\dots,k_m)\in K$ if and only if $k_1M=\dots=k_mM$ and $k_1\phi_K=k_1^{2-m}k_2\dots k_mN.$ However this says exactly that $K=\mathbf{K}_m(L,M,N,\phi_K).$
\end{proof}

\begin{theorem}\label{inv norm sbgps thm}
Let $(L,M,N,\phi)$ be an $m$-invariant quadruple for $G$.  Then $K=\mathbf{K}_m(L,M,N,\theta)$ is an invariant normal subgroup of $G^m.$ Moreover $\mathbf{L}(K)=L,$ $\mathbf{M}(K)=M,$ $\mathbf{N}(K)=N$ and $\phi_K=\phi.$ 

Conversely if $K$ is an invariant normal subgroup then $(\mathbf{L}(K),\mathbf{M}(K),\mathbf{N}(K),\phi_K)$ is an $m$-invariant quadruple and $K=\mathbf{K}_m(\mathbf{L}(K),\mathbf{M}(K),\mathbf{N}(K),\phi_K).$
\end{theorem}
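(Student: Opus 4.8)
The plan is to prove the two directions separately, leaning heavily on the structural lemmas already established. The converse direction is essentially free: it is precisely the content of Proposition~\ref{inv subgroup prop} together with the observation that $\phi_K$ is an $(LMNm)$-homomorphism (Lemma~\ref{phi lemma}) and that $L,M,N$ are normal with $N \le M \le L$ and $[G,L]\subseteq N$ (Lemma~\ref{L/N abelian}). So the bulk of the work is the forward direction: given an abstract $m$-invariant quadruple $(L,M,N,\phi)$, I must show that the set $K = \mathbf{K}_m(L,M,N,\phi)$ is genuinely an invariant normal subgroup of $G^m$, and that the recovery maps return the data I started with.

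First I would verify that $K$ is a subgroup. Closure and inverses reduce to checking the two defining conditions ``$g_1M = \dots = g_mM$'' and ``$g_1\phi = g_1^{2-m}g_2\cdots g_m\,N$'' are preserved under coordinatewise multiplication. The coset condition is immediate. For the $\phi$-condition, given $(g_1,\dots,g_m),(h_1,\dots,h_m)\in K$, I would compute $(g_1h_1)\phi$ using that $\phi$ is a homomorphism and that $L/N$ is abelian (so exponents and products can be rearranged freely modulo $N$); the key point is that $(g_1h_1)^{2-m}(g_2h_2)\cdots(g_mh_m)N$ coincides with $g_1^{2-m}g_2\cdots g_m\,h_1^{2-m}h_2\cdots h_m\,N$ in the abelian quotient. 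Next, invariance: permuting the coordinates clearly preserves the coset condition, and because $L/N$ is abelian the product $g_1^{2-m}g_2\cdots g_m\,N$ is symmetric enough that the $\phi$-condition survives any transposition — here I would use the restriction hypothesis $g\phi = g^{1-m}N$ on $M$ to convert ``$g_1$ becomes $g_j$'' bookkeeping into an identity in $L/N$. Finally, normality in $G^m$ follows from $[G,L]\subseteq N$: conjugating any coordinate by an element of $G$ changes it only within its coset of $N \le M$, so both conditions are stable.

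Then I would check that the recovery is correct, i.e.\ $\mathbf{L}(K)=L$, $\mathbf{M}(K)=M$, $\mathbf{N}(K)=N$ and $\phi_K=\phi$. For $\mathbf{L}(K)=L$: every $g\in L$ appears as a first coordinate of some element of $K$ since, by the $M$-restriction of $\phi$, the tuple $(g,\ast,\dots,\ast)$ can be completed; conversely first coordinates lie in $L^m$ by definition. For $\mathbf{N}(K)=N$: a tuple $(n,1,\dots,1)\in K$ forces $nM=M$ and $n\phi = n^{2-m}N$, and combined with the $M$-restriction $n\phi=n^{1-m}N$ this yields $n\in N$; the reverse inclusion $N\le\ker\phi$ gives $(n,1,\dots,1)\in K$. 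The identification $\mathbf{M}(K)=M$ runs through the computation of $\mathbf{H}(K)$ exactly as in the paragraph preceding Lemma~\ref{same coset lemma}, and $\phi_K=\phi$ is then immediate from the definition of $\phi_K$ evaluated on the diagonal-type tuples $(y,g,\dots,g)$.

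The main obstacle I expect is invariance under permutations in the forward direction — specifically, confirming that the single defining relation $g_1\phi = g_1^{2-m}g_2\cdots g_m\,N$, which is written asymmetrically with the first coordinate distinguished, is in fact invariant under swapping $g_1$ with some $g_j$. This is exactly where the hypotheses conspire: the abelianness of $L/N$ makes the right-hand product symmetric, while the $M$-restriction $g\phi=g^{1-m}N$ (applied to $g_1g_j^{-1}\in M$) is what certifies that $g_1\phi$ and $g_j\phi$ differ in the precise way needed so that the relation reads the same after the swap. I would isolate this as the one genuinely non-formal computation; everything else is routine verification against the quadruple axioms.
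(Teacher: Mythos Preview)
Your proposal is correct and follows essentially the same route as the paper's proof: the converse is Proposition~\ref{inv subgroup prop}, and in the forward direction both you and the paper verify closure/inverses via the abelianness of $L/N$, isolate the swap of the first coordinate with some $g_j$ as the only nontrivial invariance check (resolved via the $(LMNm)$-restriction $g\phi=g^{1-m}N$ applied to $g_1g_j^{-1}\in M$), obtain normality from $[G,L]\subseteq N$, and recover $L,M,N,\phi$ by the same computations. The only place you are slightly breezier than the paper is $\phi_K=\phi$, which in the paper is a short but genuine calculation (take $(y,g,\dots,g)\in K$, use $y\phi=y^{2-m}g^{m-1}N$ together with $(gy^{-1})\phi=(gy^{-1})^{1-m}N$ to conclude $g\phi=yN$); you should write this out rather than declare it immediate.
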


\begin{proof}
Let $(L,M,N,\phi)$ be an $m$-invariant quadruple for $G$ and let $K=\mathbf{K}_m(L,M,N,\theta).$ We initially show that $K$ is a subgroup of $G^m.$ Suppose that $(g_1,\dots,g_m),$ $(h_1,\dots,h_m)\in K,$ so that
\begin{gather*}
g_1M=\dots=g_mM,\quad h_1M=\dots=h_mM,\\
g_1\phi=(g_1^{2-m}g_2\dots g_m) N,\quad h_1\phi=(h_1^{2-m} h_2\dots h_m) N.
\end{gather*}
It is immediate that $g_1h_1M=\dots=g_mh_mM,$ and as $\phi$ is a homomorphism and $L/N$ is commutative (by Lemma \ref{L/N abelian}) we have
\begin{align*}
g_1h_1\phi= g_1\phi h_1\phi & = ((g_1^{2-m}g_2\dots g_m) N)(( h_1^{2-m} h_2\dots h_m) N)\\
& = (g_1h_1)^{2-m} (g_2h_2)\dots (g_mh_m)N.
\end{align*}
Hence $(g_1h_1,\dots, g_mh_m)\in K.$  Furthermore if $(g_1,\dots,g_m)\in K$ then $g_1^{-1}M=\dots=g_m^{-1}M,$ and 
$$g_1^{-1}\phi=(g_1\phi)^{-1}= (g_1^{2-m}g_2\dots g_m)^{-1}N = (g_1^{-1})^{2-m}(g_2^{-1}\dots g_m^{-1})N.$$
Thus $(g_1^{-1},\dots,g_m^{-1})\in K$ and so $K$ is a subgroup of $G^m.$

For the invariant property it is sufficient to show that $K$ is closed under transposition of any two coordinates.  Since $L/N$ is commutative it is immediate that 
$$g_1\phi = g_1^{2-m}(g_2\dots g_i\dots g_j\dots g_m)N = g_1^{2-m} (g_2\dots g_j\dots g_i\dots g_m)N$$
hence $K$ is closed under any transposition within the final $(m-1)$-coordinates.  Therefore it remains to show that $K$ is closed under swapping the first two coordinates.  Suppose that $(g,h,k_3,\dots,k_m)\in K$ so $g\phi=(g^{2-m}hk_3\dots k_m)N$ and $gM=hM.$ Then $g^{-1}h\in M$ and as $\phi$ is an $(LMNm)$-homomorphism we have $(g^{-1}h)\phi=(g^{-1}h)^{1-m}N.$ Therefore
$$h\phi~\textequal~ (gg^{-1}h)\phi ~\textequal~ (g\phi)(g^{-1}h\phi) ~\textequal~ (g^{2-m} hk_3\dots k_m N)(g^{-1}h)^{1-m}N ~\textequal~ h^{2-m} gk_3\dots k_m N. $$
Hence $(h,g,k_3,\dots,k_m)\in K,$ and so $K$ is invariant.

We next show that $K$ is normal. As $K$ is invariant is sufficient to show that we can conjugate in the second coordinate.  Suppose that $(k_1,\dots,k_m)\in K$ and $l\in L.$  Then as $L/M$ is commutative (since $L/N$ is commutative and $N\leq M$) and so $lk_2l^{-1}M=k_2M$ so $k_1M=lk_2l^{-1}M=k_3M=\dots=k_mM.$  Furthermore as $L/N$ is commutative
$$k_1\phi= (k_1^{2-m}k_2\dots k_m)N = (k_1^{2-m}(lk_2l^{-1})k_3\dots k_m) N.$$
Thus $(k_1,lk_2l^{-1},k_3,\dots,g_m)\in K$ so $K$ is normal. 

It remains to show that $\mathbf{L}(K)=L,$ $\mathbf{M}(K)=M,$ $\mathbf{N}(K)=N$ and $\phi_K=\phi.$ As previously remarked for and $(LMNm)$-homomorphism $N\leq\ker/(\phi),$ so for $x\in N$ we have $(x,1,\dots,1)\in K,$ hence $N\subseteq \mathbf{N}(K).$ Suppose that $(x,1,\dots,1)\in K,$ so $xM=M$ and $x\phi=x^{2-m}N.$  However as $x\in M$ and $\phi$ is an $(LMNm)$-homomorphism we also have that $x\phi=x^{1-m}N.$  Thus $x^{1-m}N=x^{2-m}N,$ so $xN=N$ and $x\in N.$  Hence $\mathbf{N}(N)=N.$

For $y\in M$ as $\phi$ is an $(LMNm)$-homomorphism we have $y\phi=y^{1-n} = y^{2-n}y^{-1}N.$ Thus $(y,y^{-1},1,\dots,1)\in K$ so $M\subseteq \mathbf{M}(K).$  Suppose that $(x,y,1,\dots,1)\in K.$  Then certainly $xM=yM=M,$ so $\mathbf{M}(K)\subseteq M$ and the two are equal.

By definition $K\subseteq L^m,$ so $\mathbf{L}(K)\subseteq L.$  Conversely for $l\in L$ choose $x\in l\phi$ and consider $(l,l,\dots,l,x).$ It is straightforward that $(l,\dots,l,x)\in K.$  Thus $\mathbf{L}(K)=L.$

Recall that $\phi_K$ is defined $g\mapsto yN,$ where $(y,g,\dots,g)\in K.$ Suppose that $g\phi_K=y$ so $(y,g,\dots,g)\in K,$ then by the definition of $K$ we have that $y\phi=y^{2-m}g^{m-1}N$ and $gM=yM$ so $gy^{-1}\in M.$ As $\phi$ is an $(LMNm)$-homomorphism $(gy^{-1})\phi=(gy^{-1})^{1-m}N=g^{1-m}y^{m-1}N.$ Then
$$g\phi=(gy^{-1})\phi (y\phi)= (g^{1-m}y^{m-1}N)( y^{2-m}g^{m-1}N) = yN.$$ Thus $\phi_K=\phi.$
\end{proof}

As the ordering on invariant normal subgroups induces the ordering on $\mathfrak{C}_{IS}(\gwi/)$ it is worthwhile to remark upon the ordering of these groups.  It is elementary that $\mathbf{K}_m(L_1,M_1,N_1,\phi_1)\subseteq \mathbf{K}_m(L_2,M_2,N_2,\phi_2)$ exactly when $L_1\subseteq L_2,$ $M_1\subseteq M_2,$ $N_1\subseteq N_2$ and $l\phi_1\subseteq l\phi_2$ for all $l\in L_1.$  It is possible to use this ordering to compute the $m$-invariant quadruple for the intersection and meet of invariant normal subgroups, which can then be combined with Corollary \ref{congs intersection and meet} to give a method to compute the intersections and joins of congruences.

\subsection{Normal subgroups of semidirect products}

The next part of the description of congruences on $G\wr\mathcal{S}_m$ uses normal subgroups of $G\wr\mathcal{S}_m.$  As a wreath product is a special type of semidirect product we shall appeal to Usenko's work \cite{Usenko} on describing subgroups of semidirect products.  We will use the convention that $P$ and $H$ are groups and $\phi\colon P\rightarrow \aut/ H$ is an antihomomorphism. For $p\in P$ and $h\in H$ we write $p\phi = \phi_p$ and $h\phi_p =h^p.$ The semidirect product of $P$ and $H$ is then the set of all ordered pairs $\{(h,p)\mid h\in H,\ p\in P\},$ with the operation:
$$(h,p)(g,q) = (hg^p,pq).$$
We denote this group by $H\rtimes_\phi P.$  A subgroup $K\leq H$ is {\em $\phi$-invariant} if for all $k\in K$ and $p\in P,$ $k^p\in K.$ In the following definition the first $2$ conditions are given in \cite{Usenko}.

\begin{definition}\label{def sncr}
Let $H\rtimes_\phi P$ be a semidirect product $J\leq H$ and $Q\leq P$ be subgroups. We say that a function $\psi: Q\rightarrow H$ is a {\em normal crossed $\mathfrak{R}_\phi^J$ homomorphism} and the triple $(J,H,\psi)$ a {\em normal crossed $\mathfrak{R}_\phi^J$ (NCR) triple} if 
\begin{enumerate}[label=(\roman*)]
\item for all $r,q\in Q$ there is $j\in J$ such that $(rq)\psi = j (r\psi) (q\psi)^r;$\label{sncr 1}
\item for all $q\in Q$ and $j\in J$ we have $(q\psi) j^q (q\psi)^{-1}\in J.$\label{sncr 2}
\end{enumerate}
Furthermore we say $\psi$ is a {\em strongly} normal crossed $\mathfrak{R}_\phi^J$ homomorphism and $(J,H,\psi)$ a {\em strongly normal crossed $\mathfrak{R}_\phi^J$ (SNCR) triple} if $Q\trianglelefteq P,$ $J\trianglelefteq H$ is $\phi$-invariant, and
\begin{enumerate}[label=(\roman*),start=3]
\item for all $q\in Q$ and $p\in P$ we have that $((q\psi)J)^p = ((pqp^{-1})\psi) J;$\label{sncr 3}
\item for all $q\in Q,$ and $h\in H$ we have that $(q\psi) h^q (q\psi)^{-1}\in hJ.$\label{sncr 4}
\end{enumerate}
For a SNCR or NCR triple define the set
$$\mathbf{L}(J,Q,\psi)=\{ (j(q\psi),q)\in H\rtimes_\phi P\mid j\in J,\ q\in Q\}.$$
\end{definition}

To simplify notation we identify $J$ with $\{(j,1)\ |\ j\in J\}\leq H\rtimes_\phi P$ and $H$ with $\{(1,h)\ |\ h\in H\}\leq H\rtimes_\phi P.$ Usenko provides the following description of subgroups of $H\rtimes_\phi P.$

\begin{theorem}[see \cite{Usenko}]\label{usenko}
Let $H\rtimes_\phi P$ be a semidirect product and let $(J,Q,\psi)$ be a NCR triple. Then $\mathbf{L}(J,Q,\theta)$ is a subgroup of $H\rtimes_\phi P.$  

Moreover given $L\leq H\rtimes_\phi P$ a subgroup, let $J=\{h\in H\ |\ (h,1)\in T\}$ and $Q=\{q\in Q\ |\ \exists (u,q)\in T \}.$  For each $q\in Q$ choose $ h=q\theta$ such that $(h,q)\in T.$  Then $(J,Q,\psi)$ is an NCR triple and $L=\mathbf{L}(J,Q,\psi).$  
\end{theorem}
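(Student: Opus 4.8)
The plan is to work throughout with the projection homomorphism $\pi\colon H\rtimes_\phi P\to P$, $(h,p)\mapsto p$, whose kernel is the copy of $H$ identified with $\{(h,1)\mid h\in H\}$. Since $\phi$ is an antihomomorphism we have $\phi_{pq}=\phi_q\phi_p$, so the inverse in the semidirect product is $(h,p)^{-1}=\big((h^{-1})^{p^{-1}},p^{-1}\big)$, and the cancellation $\big(a^{p^{-1}}\big)^{p}=a$ (valid because $\phi_{p^{-1}}\phi_p=\phi_1=\mathrm{id}$) will be the workhorse identity for every inverse and conjugate computed below. I will keep this convention explicit, as it is the only genuinely error-prone ingredient.

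For the forward direction I would show that given an NCR triple $(J,Q,\psi)$ the set $\mathbf{L}(J,Q,\psi)$ is closed under products and inverses. Multiplying $(j_1(r\psi),r)$ and $(j_2(q\psi),q)$ gives first coordinate $j_1(r\psi)j_2^{\,r}(q\psi)^r$; condition~\ref{sncr 2} with $j=j_2$ lets me write $(r\psi)j_2^{\,r}=j_3(r\psi)$ for some $j_3\in J$, and then condition~\ref{sncr 1} replaces $(r\psi)(q\psi)^r$ by $j_0^{-1}(rq)\psi$, yielding $\big(j_1j_3j_0^{-1}(rq)\psi,\,rq\big)$ with $j_1j_3j_0^{-1}\in J$ and $rq\in Q$. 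For inverses I first observe that condition~\ref{sncr 1} at $r=q=1$ forces $1\psi\in J$; then condition~\ref{sncr 1} at the pair $(q^{-1},q)$ expresses $q^{-1}\psi$ through $1\psi$ and $(q\psi)^{q^{-1}}$, and feeding this together with condition~\ref{sncr 2} into the computed inverse $\big(((q\psi)^{-1})^{q^{-1}}(j^{-1})^{q^{-1}},\,q^{-1}\big)$ rewrites it as $(j'(q^{-1}\psi),q^{-1})$ with $j'\in J$.

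For the converse I would set $J=L\cap\ker/(\pi)$, regarded as a subgroup of $H$, and $Q=L\pi$; both are subgroups because $\pi$ is a homomorphism. After choosing for each $q\in Q$ a representative $(q\psi,q)\in L$, the two NCR axioms drop out of closure of $L$. The element $\big((rq)\psi,rq\big)\big((r\psi)(q\psi)^r,rq\big)^{-1}$ lies in $L$ and projects to $1$, hence lands in $J$, which is precisely condition~\ref{sncr 1}; and the conjugate $(q\psi,q)(j,1)(q\psi,q)^{-1}=\big((q\psi)\,j^q(q\psi)^{-1},1\big)$ lies in $L\cap\ker/(\pi)=J$, which is precisely condition~\ref{sncr 2}. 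Finally $L=\mathbf{L}(J,Q,\psi)$ follows from two inclusions: $(j,1)(q\psi,q)=(j(q\psi),q)$ gives $\subseteq$, while for any $(u,q)\in L$ the product $(u,q)(q\psi,q)^{-1}=(u(q\psi)^{-1},1)$ lies in $J$, so $u=j(q\psi)$ with $j\in J$, giving $\supseteq$.

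The routine parts are the subgroup claims for $J$ and $Q$ and the inclusion $\mathbf{L}(J,Q,\psi)\subseteq L$. The delicate part is the bookkeeping with the antihomomorphism convention: every inverse and conjugate must be expanded with the correct $\phi_{p^{-1}}$-twists. I expect the genuine obstacle to be closure under inverses in the forward direction, as it is the only place where conditions~\ref{sncr 1} and~\ref{sncr 2} must be combined rather than used separately; in particular one must first extract $1\psi\in J$ before $q^{-1}\psi$ can be controlled, and this step is the most sensitive to the ordering conventions.
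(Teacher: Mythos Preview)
Your proposal is correct. The paper does not give its own proof of this theorem: it is stated as a citation of Usenko's result and no argument is supplied, so there is nothing to compare against beyond noting that your approach is the natural one. Your handling of the antihomomorphism convention is accurate (in particular $(a^{p^{-1}})^p=a$ since $\phi_p\phi_{p^{-1}}=\phi_{p^{-1}p}=\mathrm{id}$), the product and inverse computations go through exactly as you describe, and the extraction of $1\psi\in J$ followed by the expression of $q^{-1}\psi$ via condition~\ref{sncr 1} at $(q^{-1},q)$ is the right way to close under inverses.
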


In this description of subgroups of $H\rtimes P$ the group $Q$ can be viewed as the projection of the subgroup $L\leq H\rtimes P$ onto the $P$ coordinate, and $J$ is the kernel of that projection. One particular point that is important to note is that while $\mathbf{L}(J_1,Q_1,\psi_1)=\mathbf{L}(J_2,Q_2,\psi_2)$ does give that $J_1=J_2$ and $Q_1=Q_2$ it does not imply that $\psi_1=\psi_2.$ We specialize Theorem \ref{usenko} to normal subgroups.

\begin{corollary}\label{usenko normal}
Let $H\rtimes_\phi P$ be a semidirect product and $(J,Q,\psi)$ a NCR triple.  Then $L=\mathbf{L}(J,Q,\theta)$ is normal in $H\rtimes_\phi P$ if and only if $(J,Q,\psi)$ is a SNCR triple.
\end{corollary}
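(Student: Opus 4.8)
The plan is to prove both implications by directly translating normality of $\mathbf{L}(J,Q,\psi)$ into conditions on the data $(J,Q,\psi)$, using that conjugation in $H\rtimes_\phi P$ factors through conjugation by elements of $H$ and by elements of $P$ separately. Throughout I identify $J$ with $\{(j,1)\}$ and $H,P$ with their embedded copies. The key computational tool is the conjugation formula in the semidirect product: for $(h,p),(u,q)\in H\rtimes_\phi P$ one has $(h,p)(u,q)(h,p)^{-1} = (h\,u^{p}\,((p q p^{-1})\text{-action on }h^{-1}),\,p q p^{-1})$, which I would expand carefully once at the start so that the rest of the argument is bookkeeping. Since a subgroup is normal iff it is stable under conjugation by the generating set $H\cup P$ of $H\rtimes_\phi P$, it suffices to check stability under conjugation by arbitrary $(h,1)$ and by arbitrary $(1,p)$.

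First I would prove the forward direction: assume $L=\mathbf{L}(J,Q,\psi)$ is normal. Conjugating the element $(j,1)=(j,1)\in L$ (with $q=1$, so $j\in J$) by $(1,p)$ yields $(j^{p},1)$, which must lie in $L$; since its $P$-coordinate is $1$ this forces $j^{p}\in J$, giving that $J$ is $\phi$-invariant. Conjugating $(q\psi,q)\in L$ by $(1,p)$ gives an element with $P$-coordinate $pqp^{-1}$, so normality of $L$ forces $pqp^{-1}\in Q$, i.e. $Q\trianglelefteq P$; comparing the $H$-coordinate of this conjugate with the $H$-coordinate of the generator sitting over $pqp^{-1}$ (namely $(pqp^{-1})\psi$ times an element of $J$) yields condition \ref{sncr 3}, that $((q\psi)J)^{p}=((pqp^{-1})\psi)J$. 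Finally, conjugating $(q\psi,q)$ by a general $(h,1)$ produces an element over $q$ whose $H$-coordinate differs from $q\psi$ by the expression in \ref{sncr 4}; membership in $L$ forces that difference to lie in $(q\psi)^{-1}\cdot(q\psi)J=J$ after rearrangement, which is exactly condition \ref{sncr 4} stating $(q\psi)h^{q}(q\psi)^{-1}\in hJ$. Together with the NCR hypotheses \ref{sncr 1},\ref{sncr 2} these are precisely the SNCR conditions.

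For the converse I would assume $(J,Q,\psi)$ is an SNCR triple and verify that $\mathbf{L}(J,Q,\psi)$ is closed under conjugation by every $(h,1)$ and $(1,p)$. Take a generic element $(j(q\psi),q)\in L$ and conjugate by $(1,p)$: using $\phi$-invariance of $J$ (so $j^{p}\in J$) together with condition \ref{sncr 3} rewriting $(q\psi)^{p}$ modulo $J$ as $(pqp^{-1})\psi$, and $Q\trianglelefteq P$ so that $pqp^{-1}\in Q$, the conjugate rearranges into the form $j'\,((pqp^{-1})\psi)$ with $j'\in J$ over the element $pqp^{-1}\in Q$, hence lies in $L$. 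Conjugating instead by $(h,1)$, I use condition \ref{sncr 4} to absorb the $h^{q}$-type factor into $hJ$ and the $\phi$-invariance/normality of $J$ in $H$ to keep the $J$-part inside $J$; the $P$-coordinate is unchanged at $q\in Q$, so again the conjugate is of the required shape and lies in $L$. Since $L$ is already a subgroup by Theorem~\ref{usenko}, stability under these two families of conjugations gives normality.

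The main obstacle I expect is purely algebraic bookkeeping rather than any conceptual difficulty: the conjugation formula in the semidirect product mixes the action $\phi$ with the $\psi$-values in a way that must be tracked modulo $J$ at every step, and the \emph{antihomomorphism} convention ($p\phi=\phi_p$ with $h^{p}$ acting on the right, so that $(hg)^p=h^p g^p$ but $h^{pq}=(h^q)^p$ or similar) makes it easy to misplace an inverse or reverse a composition order. The delicate point is matching the $H$-coordinate of a conjugate with the $H$-coordinate of the unique coset representative $(pqp^{-1})\psi$ that $L$ prescribes over $pqp^{-1}$: conditions \ref{sncr 3} and \ref{sncr 4} are exactly what is needed to reconcile these modulo $J$, and I would be careful to invoke the warning noted after Theorem~\ref{usenko} that the triple does not determine $\psi$ uniquely, so all coordinate comparisons must be made modulo $J$ rather than on the nose.
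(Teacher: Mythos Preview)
Your proposal is correct and follows essentially the same approach as the paper: both reduce normality of $L$ to closure under conjugation by the generating set $\{(h,1):h\in H\}\cup\{(1,p):p\in P\}$, match condition~\ref{sncr 3} with stability under conjugation by $(1,p)$ and condition~\ref{sncr 4} with stability under conjugation by $(h,1)$, and observe that $Q\trianglelefteq P$ and $J\trianglelefteq H$ $\phi$-invariant fall out immediately. The paper's proof is a one-paragraph sketch declaring these equivalences straightforward, whereas you spell out the computations and flag the bookkeeping hazards, but the strategy is identical; the only minor omission is that in your forward direction you should also conjugate $(j,1)$ by $(h,1)$ to record $J\trianglelefteq H$.
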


\begin{proof}
This is straightforward; it is immediate that $Q$ must be normal and $J$ normal and $\phi$-invariant, also\ref{sncr 3} from Definition \ref{def sncr} is equivalent to $L$ being closed under conjugation by elements of the form $(1,p)$ and \ref{sncr 4} to $L$ being closed under conjugation by elements of the form $(h,1).$  Elements of the form $(1,p)$ and $(h,1)$ generate $H\rtimes_\phi P,$ hence $L$ is normal if and only if $(J,Q,\psi)$ is an SNCR triple.
\end{proof}

Let $(J,Q,\psi)$ be a SNCR triple for $H\rtimes_\phi P$. As $J\trianglelefteq H$ we can consider the quotient group $H/J.$ As $J$ is $\phi$ invariant the antihomomorphism $\phi\colon P\rightarrow \aut/H$ induces an antihomomorhism $\tilde{\phi}\colon P\rightarrow\aut/ H/J;$ $p\mapsto[hJ\mapsto (h\phi_p)J].$  We write $(hJ)^p$ for $(hJ)(p\tilde{\phi}).$ Define $\overline{\psi}:Q\rightarrow H/J$ by $q\overline{\psi}=(q\psi) J.$ As $\psi$ is a SNCR homomorphism we have that there is $j\in J$ such that $(qp)\psi = j(q\psi)(p\psi)^q$ and that $(q\psi) h^q (q\psi)^{-1}\in hJ$ for all $h\in H.$ Thus
$$(qp)\overline{\psi}=(qp)\psi J=(q\psi)(p\psi)^q J = (q\psi)(q\psi)^{-1}(p\psi)(q\psi)J=(p\psi)(q\psi)J = (p\overline{\psi})(q\overline{\psi}).$$
Thus $\overline{\psi}$ is an anti-homomorphism. Using antihomomorphisms $Q\rightarrow H/J$ allows us to define unique triples to each normal subgroup of $H\rtimes_\phi P$.  We say that $(J,Q,\xi)$ is a {\em normal subgroup triple for $H\rtimes_\phi P$} if $Q\trianglelefteq P,$ $J\trianglelefteq H$ is $\phi$-invariant and $\xi:Q\rightarrow H/J$ is an antihomomorphism such that:
\begin{enumerate}[label=(\roman*)]
\item for all $q\in Q$ and $p\in P$ we have that $(q\xi)^p = ((pqp^{-1})\xi) ;$\label{triple 1}
\item for all $q\in Q$ and $h\in H$ we have that $ h^q J =(q\xi)^{-1}  hJ (q\xi).$\label{triple 2}
\end{enumerate}
For a normal subgroup triple $(J,Q,\xi)$ we define the set
$$\mathbf{W}(J,Q,\xi)= \{ (x,y)\in H\rtimes_\phi P \mid y\xi =xJ\}.$$

\begin{corollary}\label{dank triple cor}
Let $H\rtimes_\phi P$ be a semidirect product. The normal subgroups of $H\rtimes_\phi P$ are exactly the sets $\mathbf{W}(J,Q,\xi)$ for normal subgroup triples $(J,Q,\xi).$  Moreover normal subgroups of $H\rtimes_\phi P$ are in bijection with normal subgroup triples.
\end{corollary}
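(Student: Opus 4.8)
The plan is to push the bijection we want through the (non-canonical) parametrization of Corollary~\ref{usenko normal}, the point being that passing from a lift $\psi\colon Q\to H$ to the induced antihomomorphism $\overline{\psi}\colon Q\to H/J$ collapses exactly the ambiguity in Usenko's description, since different lifts of the same coset data yield the same subgroup. The first thing I would record is the set identity $\mathbf{L}(J,Q,\psi)=\mathbf{W}(J,Q,\overline{\psi})$, valid for any SNCR triple $(J,Q,\psi)$: a generator $(j(q\psi),q)$ satisfies $j(q\psi)J=(q\psi)J=q\overline{\psi}$, while any $(x,y)\in\mathbf{W}(J,Q,\overline{\psi})$ has $y\in Q$ and $xJ=y\overline{\psi}=(y\psi)J$, so, using $J\trianglelefteq H$, $x=j(y\psi)$ for some $j\in J$. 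Hence the two families of sets coincide, and the work reduces to matching the defining conditions and proving that $\overline{\psi}$ is canonical.

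For the direction ``every normal subgroup is a $\mathbf{W}$'', I would take a normal subgroup $L$, apply Corollary~\ref{usenko normal} to write $L=\mathbf{L}(J,Q,\psi)$ for an SNCR triple, and set $\xi=\overline{\psi}$. The discussion preceding the statement already shows $\overline{\psi}$ is an antihomomorphism; I would then observe that conditions~\ref{triple 1} and~\ref{triple 2} of a normal subgroup triple are nothing but the images mod~$J$ of the SNCR conditions~\ref{sncr 3} and~\ref{sncr 4}. Indeed \ref{sncr 4} applied to an arbitrary $h\in H$ gives $(q\psi)h^q(q\psi)^{-1}J=hJ$, which rearranges in $H/J$ to $h^qJ=(q\xi)^{-1}(hJ)(q\xi)$, i.e.\ \ref{triple 2}; and \ref{sncr 3} reads $(q\xi)^p=(pqp^{-1})\xi$, i.e.\ \ref{triple 1}. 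Thus $(J,Q,\xi)$ is a normal subgroup triple and $L=\mathbf{W}(J,Q,\xi)$.

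For the converse I would begin with a normal subgroup triple $(J,Q,\xi)$, choose any set-theoretic section $\psi\colon Q\to H$ with $(q\psi)J=q\xi$, and check that $(J,Q,\psi)$ is an SNCR triple, after which Corollary~\ref{usenko normal} yields that $\mathbf{W}(J,Q,\xi)=\mathbf{L}(J,Q,\psi)$ is normal. The normality and $\phi$-invariance of $J$ and the normality of $Q$ are inherited from the triple, and conditions~\ref{sncr 3} and~\ref{sncr 4} are the conditions~\ref{triple 1} and~\ref{triple 2} read back in $H$. The only genuine computation, and the step I expect to be the main obstacle, is the crossed-homomorphism condition~\ref{sncr 1}: here I would apply~\ref{triple 2} with exponent $r$ and $h=q\psi$ to get $(q\psi)^rJ=(r\psi)^{-1}(q\psi)(r\psi)J$, whence $(r\psi)(q\psi)^rJ=(q\psi)(r\psi)J=(q\xi)(r\xi)=(rq)\xi=(rq)\psi J$, the middle equality using that $\xi$ is an antihomomorphism; this supplies the required $j\in J$ with $(rq)\psi=j(r\psi)(q\psi)^r$. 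Condition~\ref{sncr 2} is the case $h=j\in J$ of~\ref{triple 2}. Crucially, none of these verifications depends on the choice of section, which is precisely why $\xi$, rather than $\psi$, is the right object.

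Finally, to obtain the bijection I would recover the triple canonically from a normal subgroup $W=\mathbf{W}(J,Q,\xi)$: set $J=\{h\in H\mid(h,1)\in W\}$, let $Q$ be the projection of $W$ to $P$, and define $q\xi=xJ$ for any $(x,q)\in W$, which is well defined because $(x,q),(x',q)\in W$ force $(xx'^{-1},1)\in W$ and hence $xx'^{-1}\in J$. These recovery maps are inverse to $(J,Q,\xi)\mapsto\mathbf{W}(J,Q,\xi)$, so the assignment is a bijection between normal subgroup triples and normal subgroups, which completes the argument.
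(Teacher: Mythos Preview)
Your proposal is correct and follows essentially the same approach as the paper: reduce to Corollary~\ref{usenko normal}, identify $\mathbf{L}(J,Q,\psi)=\mathbf{W}(J,Q,\overline{\psi})$, and observe that passing to $\overline{\psi}$ kills exactly the non-uniqueness in Usenko's parametrization. The paper's proof is a two-line sketch that asserts these points without verification, whereas you have supplied the details in both directions (in particular the lift-back of a normal subgroup triple to an SNCR triple and the explicit recovery of $(J,Q,\xi)$ from $W$), so your version is strictly more complete.
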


\begin{proof}
This follows immediately from Corollary \ref{usenko normal} noting that if $(J,Q,\psi)$ is a SNCR triple then $(J,Q,\overline{\psi})$ is a normal subgroup triple (with $\overline{\psi}$ as defined just prior to this corollary), and $\mathbf{L}(J,Q,\psi)=\mathbf{W}(J,Q,\overline{\psi}).$  That a normal subgroup uniquely determines a normal subgroup triple follows from the elementary observation that if $\mathbf{L}(J,Q,\psi_1)=\mathbf{L}(J,Q,\psi_2)$ then $\overline{\psi_1}=\overline{\psi_2}.$ 
\end{proof}

We now apply this to $G\wr\mathcal{S}_m,$ and unless otherwise stated we continue to assume that $m\geq 3.$  Suppose we have an normal subgroup $K\trianglelefteq G^m$ that is invariant under the action of $\mathcal{S}_m,$ which is exactly saying that $K$ is an invariant subgroup of $G^m$ so there is an $m$-invariant quadruple $(L,M,N,\phi)$ such that $K=\mathbf{K}_m(L,M,N,\phi).$ 

\begin{lemma}
Let $(L,M,N,\phi)$ be an $m$-invariant quadruple for $G$ and let $K=\mathbf{K}_m(L,M,N,\phi);$ let $Q\neq \{1\}$ be a normal subgroup of $\mathcal{S}_m,$ and let $\xi\colon Q\rightarrow G^m/K$ be an anti-homomorphism such that $(K,Q,\xi)$ is a normal subgroup triple for $G\wr\mathcal{S}_m.$ Then $L=M=G,$ so $K=\mathbf{K}(G,G,N,g\mapsto g^{1-m}N).$ 
\end{lemma}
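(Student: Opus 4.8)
The plan is to extract everything from the second defining condition of a normal subgroup triple, namely that $h^q K = (q\xi)^{-1}(hK)(q\xi)$ for all $h\in G^m$ and all $q\in Q$, where the action of $q\in\mathcal{S}_m$ on $G^m$ is by permutation of coordinates. Since $Q\neq\{1\}$, I would fix a nontrivial $q\in Q$; as a coordinate permutation it must carry the content of some position $i$ to a different position $j\neq i$. I then test the condition on ``single-entry'' tuples: for an arbitrary $g\in G$ let $h$ be the element of $G^m$ with $g$ in position $i$ and $1$ elsewhere, and write $q\xi=vK$ with $v=(v_1,\dots,v_m)\in G^m$.

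Rewriting the condition as $w:=v^{-1}h^{-1}v\,h^q\in K$ and computing coordinatewise, $w$ has $v_i^{-1}g^{-1}v_i$ in position $i$, has $g$ in position $j$, and is trivial in every other coordinate. Now I invoke the explicit description $K=\mathbf{K}_m(L,M,N,\phi)$. First, $K\subseteq L^m$, so every coordinate of $w$ lies in $L$; reading off position $j$ gives $g\in L$, and since $g$ was arbitrary this forces $L=G$. Second, membership in $K$ requires all coordinates of $w$ to lie in a single left coset of $M$; comparing position $j$ (value $g$) with any position $\ell\notin\{i,j\}$ (value $1$), which exists precisely because $m\geq 3$, gives $gM=M$, i.e.\ $g\in M$. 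Again $g$ is arbitrary, so $M=G$.

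With $L=M=G$ in hand, the conclusion is immediate: since $\phi$ is an $(LMNm)$-homomorphism its restriction to $M$ is the map $g\mapsto g^{1-m}N$, and as $M=G=L$ this determines $\phi$ on all of $G$. Hence $K=\mathbf{K}_m(G,G,N,g\mapsto g^{1-m}N)$, as claimed. It is worth noting that neither the first condition of the normal subgroup triple nor the values of $\phi$ beyond its restriction to $M$ enter the argument at all.

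The main obstacle is purely the coordinate bookkeeping in the middle step: one must see that evaluating $h^qK=(q\xi)^{-1}(hK)(q\xi)$ on a single-entry tuple $h$ produces an element of $K$ whose only surviving entries are a conjugate of $g^{-1}$ in the original slot and a clean copy of $g$ in a slot that was trivial in $h$. Once this element is identified, the two structural constraints built into $\mathbf{K}_m(L,M,N,\phi)$ — membership in $L^m$ and the common-$M$-coset requirement — do all the work, and the role of the hypothesis $m\geq3$ is exactly to supply a third coordinate that is visibly trivial, against which the coset of $g$ can be measured.
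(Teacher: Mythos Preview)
Your proof is correct and follows essentially the same route as the paper: both apply condition (ii) of the normal subgroup triple to a single-entry tuple $h$, obtain an element of $K$ with a conjugate of $g^{\pm1}$ in one slot, $g^{\mp1}$ in another, and $1$ elsewhere, and then read off $g\in M$ from the common-$M$-coset condition (using the third trivial coordinate supplied by $m\ge 3$). The only cosmetic differences are that the paper argues by contradiction and pins down the specific element $a\in Q$ with $1a=2$ (invoking that every nontrivial normal subgroup of $\mathcal{S}_m$ contains such an element), whereas you argue directly and work with an arbitrary nontrivial $q$ moving some $i$ to some $j$; you also extract $L=G$ explicitly from the $j$-th coordinate rather than deducing it a posteriori from $M\le L$.
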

\begin{proof}
Suppose for a contradiction that $M\neq G,$ so there is $x\in G$ with $xM\neq M.$  Take $1\neq a\in Q$ that has $1a=2$ (note that this is possible as $Q$ is non-trivial and all non-trivial normal subgroups of $\mathcal{S}_m$ for $m\geq 3$ contain such elements). As $(K,Q,\xi)$ is a normal subgroup triple, for $g\in G^m$ we have $(g^a)K=(a\xi)^{-1}(gK)(a\xi).$  Choose $(a_1,\dots,a_m)\in a\xi,$ then this implies that 
\begin{align*}
(1,x,1,\dots,1)K=(x,1,\dots,1)^a K & =(a_1,\dots,a_m)^{-1}(x,1,\dots,1)(a_1,\dots,a_m)K\\
& = (a_1^{-1}xa_1,1,\dots,1)K.
\end{align*}
This then gives that 
$$(1,x^{-1},1,\dots,1)(a_1^{-1}xa_1,1\dots,1)=(a_1^{-1}xa_1,x^{-1},1,\dots,1)\in K.$$
Recall that
$$K= \{\{ (g_1,\dots,g_m)\in L^m \mid g_1M=\dots=g_mM,\ g_1N\phi= g_1^{1-m}g_2g_3\dots g_m N\}. $$
This implies that $a_1^{-1}xa_1M=xM=M$, a contradiction, so we must have that $M=G.$ That $L=G$ follows as $M\leq L,$ and the final claim is then immediate. 
\end{proof}

Let $K=\mathbf{K}_m(G,G,N,g\mapsto g^{1-n}N)$ and notice that 
$$K=\{ (g_1,\dots,g_m)\mid g_1g_2\dots g_m\in N\}.$$ 
In this case the function 
$$\Xi:G^m/K\rightarrow G/N;\quad  (g_1,\dots,g_m)K\mapsto g_1\dots g_m N$$
is easily seen to be an isomorphism. The action of $\mathcal{S}_m$ on $G^m$ (permuting the coordinates) carries forward to the quotient group $G^m/K$ and this induces an action of $\mathcal{S}_m$ on $G/N$ via the isomorphism $\Xi.$  As $G/N$ is abelian (by Lemma \ref{L/N abelian}) this induced action is trivial, so with $K^\prime=\{(k;1)\in G\wr\mathcal{S}_m\mid k\in K\}$ we obtain that $(G\wr\mathcal{S}_m)/K^\prime\cong G/N\times \Sn/.$ 

Let $(K,Q,xi)$ be a normal subgroup triple, and recall that $K$ is the kernel of the projection of $\mathbf{W}(K,Q,xi)\leq G\wr\mathcal{S}_m$ onto the final coordinate. By the correspondence theorem normal subgroups of $G\wr\mathcal{S}_m$ that correspond to a normal subgroup triple $(K,Q,\xi)$ for a fixed $K$ (we vary the $Q$ and $\xi$) are the lifts of normal subgroups of $(G\wr\mathcal{S}_m)/K$ to $G\wr\mathcal{S}_m$ (where by the lift of a subgroup $C\leq A/B$ to $A$ we mean $a\in A$ such that $aB\in C$) such that the projection of the subgroup onto the second coordinate has trivial kernel. 

We can use Goursat's lemma (Theorem \ref{inv norm 2}) to obtain the set of normal subgroups $R\trianglelefteq G/N\times \mathcal{S}_m$ in terms of subgroups $A,B\trianglelefteq G/N$ and $U,V\trianglelefteq \mathcal{S}_m$ and isomorphisms $\psi\colon U/V\rightarrow A/B.$ As $B$ is the kernel of the projection onto the $\mathcal{S}_m$ coordinate we may assume that $B$ is trivial, so may simplify this to a pair $Q,$ $\zeta$ where $Q\trianglelefteq \mathcal{S}_m$ and $\zeta\colon Q\rightarrow G/N$ is a homomorphism.  The subgroup of $G/N\times \mathcal{S}_m$ is then
$$\{( q\theta, q)\ |\ q\in Q\}\trianglelefteq G/N\times \Sn/,$$
We can now proceed to summarize this in the following description of subgroups of $G\wr\mathcal{S}_m.$ 

\begin{theorem}\label{subgroups of gws}
Let $G$ be a group and $m\geq 3$ an integer.  The following is a complete list of all normal subgroups of $G\wr\mathcal{S}_m$:
\begin{enumerate}[label={(\roman*)}]
\item for each $K\trianglelefteq G^m$ an invariant normal subgroup:
$$\{(k,1)\mid k\in K\}\trianglelefteq G\wr\mathcal{S}_m;$$
\item for each $N\trianglelefteq G$ with $G/N$ abelian, $Q\trianglelefteq \mathcal{S}_m$ non trivial and $\xi\colon Q\rightarrow G/N$ a homomorphism:
$$ \{ (g,q)\mid q\in G,\ (gK)\Xi= q\xi\}\trianglelefteq G\wr\mathcal{S}_m;$$
where $K=\mathbf{K}_m(G,G,N,g\mapsto g^{1-m}N)$ and $\Xi\colon G^m/K\rightarrow G/N$ is defined $(g_1,\dots,g_m)K\mapsto g_1\dots g_m N. $
\end{enumerate}
\end{theorem}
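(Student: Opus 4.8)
The plan is to read the classification straight off Corollary \ref{dank triple cor}, which says that every normal subgroup of $G\wr\mathcal{S}_m=G^m\rtimes\mathcal{S}_m$ is $\mathbf{W}(K,Q,\xi)$ for a unique normal subgroup triple $(K,Q,\xi)$: here $K\trianglelefteq G^m$ is an invariant normal subgroup (the $\phi$-invariance of the semidirect product being exactly $\mathcal{S}_m$-invariance), $Q\trianglelefteq\mathcal{S}_m$, and $\xi\colon Q\rightarrow G^m/K$ is an antihomomorphism satisfying conditions \ref{triple 1} and \ref{triple 2}. First I would split on whether $Q$ is trivial, obtaining the two families of the statement.

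If $Q=\{1\}$, then $\xi$ is forced to be the trivial map and $\mathbf{W}(K,\{1\},\xi)=\{(k,1)\mid k\in K\}$. Letting $K$ range over the invariant normal subgroups of $G^m$, which are described by $m$-invariant quadruples via Theorem \ref{inv norm sbgps thm}, gives precisely family (i); normality is automatic from Corollary \ref{dank triple cor}, though one can also verify it by a direct conjugation computation using that $K$ is $\mathcal{S}_m$-invariant and normal in $G^m$.

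If $Q\neq\{1\}$, then the preceding lemma forces $K=\mathbf{K}_m(G,G,N,g\mapsto g^{1-m}N)$ with $N=\mathbf{N}(K)$, and since $[G,\mathbf{L}(K)]=[G,G]\subseteq N$ by Lemma \ref{L/N abelian} the quotient $G/N$ is abelian. Here I would transport everything through the isomorphism $\Xi\colon G^m/K\rightarrow G/N$, setting $\overline{\xi}=\xi\Xi\colon Q\rightarrow G/N$; because $G/N$ is abelian, an antihomomorphism into it is the same as a homomorphism, so $\overline{\xi}$ is a homomorphism and $\mathbf{W}(K,Q,\xi)$ takes the form $\{(g,q)\mid (gK)\Xi=q\overline{\xi}\}$ appearing in family (ii). Since the two cases are distinguished by $Q=\{1\}$ versus $Q\neq\{1\}$ and Corollary \ref{dank triple cor} guarantees the triples exhaust all normal subgroups, the two families together constitute a complete list.

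The main obstacle is faithfully transporting conditions \ref{triple 1} and \ref{triple 2} across $\Xi$. The decisive observation is that the coordinate-permutation action of $\mathcal{S}_m$ on $G^m/K$ becomes trivial once carried over to the abelian group $G/N$, since reordering a product does not change its coset. Condition \ref{triple 2} then collapses to a tautology, while condition \ref{triple 1} becomes the requirement $(pqp^{-1})\overline{\xi}=q\overline{\xi}$ for all $p\in\mathcal{S}_m$, that is, $\overline{\xi}$ must be constant on $\mathcal{S}_m$-conjugacy classes of $Q$ (equivalently, $\overline{\xi}$ is trivial on the commutator $[\mathcal{S}_m,Q]$). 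This is exactly the normality constraint that emerges from the Goursat step applied to $(G\wr\mathcal{S}_m)/K^\prime\cong G/N\times\mathcal{S}_m$, and I would take care to carry it as part of the data defining family (ii) rather than reading ``$\xi$ a homomorphism'' without this invariance, since for an arbitrary homomorphism $\overline{\xi}$ the set $\{(g,q)\mid (gK)\Xi=q\overline{\xi}\}$ is always a subgroup but is normal only under this condition.
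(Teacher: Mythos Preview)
Your proposal is correct and follows essentially the same route as the paper: both arguments start from Corollary~\ref{dank triple cor}, split on whether $Q$ is trivial, and for $Q\neq\{1\}$ invoke the preceding lemma to force $K=\mathbf{K}_m(G,G,N,g\mapsto g^{1-m}N)$ and then transport everything along $\Xi$. The only cosmetic difference is at the very end: the paper passes to the quotient $(G\wr\mathcal{S}_m)/K'\cong G/N\times\mathcal{S}_m$ and reads off its normal subgroups via the correspondence theorem together with Goursat's lemma, whereas you stay on the level of normal subgroup triples and compute directly what conditions \ref{triple 1} and \ref{triple 2} become after composing with $\Xi$. These are two phrasings of the same computation.

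Your final paragraph is a genuine and correct refinement: the requirement that $\overline{\xi}$ be constant on $\mathcal{S}_m$-conjugacy classes of $Q$ (equivalently, trivial on $[\mathcal{S}_m,Q]$) is exactly what the Goursat step in the paper produces for normality in $G/N\times\mathcal{S}_m$, and it is not automatic for arbitrary homomorphisms $Q\to G/N$ when $m\in\{3,4\}$ (e.g.\ $Q=\mathcal{A}_3$ or $Q=V_4$ can map nontrivially into an abelian $G/N$, yet $[\mathcal{S}_m,Q]=Q$ forces $\overline{\xi}$ to be trivial). The paper's statement leaves this implicit, and you are right to carry it explicitly as part of the data for family~(ii).
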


When $m=2$ reproducing similar arguments gives that normal subgroups $L\trianglelefteq G\wr\mathcal{S}_2$ are of the form either: $L=\{(k,1)\mid k\in K\}$ for $K\trianglelefteq G^2$ an invariant normal subgroup, or when $K\trianglelefteq G^2$ is an invariant normal subdirect product so $K=X(G,G,N,N,\theta)$ as in Corollary \ref{inv norm 2}, then $L=\{(k,s)\mid (kN)\alpha=s\zeta\}$ where $\zeta\colon \mathcal{S}_2\rightarrow G/N$ is a homomorphism and $\alpha\colon G^2/K\rightarrow G/N$ is defined $(g,h)K\mapsto (gN)((hN)\theta)^{-1}.$

\section{The Number of Congruences}

We shall now delve deeper into the consideration of the set of congruences on $\gwi/$ and will provide an answer to the question: what is the asymptotic growth of $|\mathfrak{C}(\gwi/)|$?  We recall that for $\In/$ the number of congruences grows linearly in $n,$ and the number of normal subgroups of $G^n$ grows exponentially in $n.$  We shall show that for $\gwi/$ the growth is polynomial.

\begin{proposition}\label{finitesubs}
Let $G$ be a finite group. Then there is an integer $\lambda_1(G)$ such that for each $m\in \mathbb{N}$ the number of permutation invariant subgroups of $G^m$ is less than $\lambda_1(G).$
\end{proposition}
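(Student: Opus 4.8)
The plan is to count the invariant normal subgroups of $G^m$ through the defining data provided by Theorem \ref{inv norm sbgps thm}, and to observe that this data lives entirely inside the fixed group $G$, so that its cardinality is bounded independently of $m$. (These are exactly the invariant subgroups that feature in the congruence decomposition, as in Proposition \ref{idemsep congs}, and which the introduction calls the permutation invariant normal subgroups of $G^i$.) First I would dispose of the small cases: since $G$ is finite, $G^1$ and $G^2$ are finite groups, so each has only finitely many subgroups and a fortiori finitely many invariant ones; let $c$ denote the total number of invariant subgroups of $G^1$ and of $G^2$.

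For $m\geq 3$, Theorem \ref{inv norm sbgps thm} gives a bijection between the invariant normal subgroups $K\trianglelefteq G^m$ and the $m$-invariant quadruples $(L,M,N,\phi)$, via $K\mapsto(\mathbf{L}(K),\mathbf{M}(K),\mathbf{N}(K),\phi_K)$. It therefore suffices to bound the number of such quadruples uniformly in $m$. Here $L,M,N$ are all normal subgroups of $G$, so the triple $(L,M,N)$ ranges over a set of size at most $|\mathfrak{N}(G)|^3$, which depends only on $G$. For each fixed triple, $\phi$ is in particular a homomorphism $L\to L/N$, and the set of all such homomorphisms is finite of cardinality at most $|L/N|^{|L|}\leq |G|^{|G|}$ (a homomorphism is determined by its values, and there are at most $|L/N|^{|L|}$ functions $L\to L/N$); the further requirement that $\phi$ be an $(LMNm)$-homomorphism only shrinks this set. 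Hence for every $m\geq 3$ the number of invariant normal subgroups of $G^m$ is at most $|\mathfrak{N}(G)|^3\,|G|^{|G|}$. Setting
$$\lambda_1(G)=c+|\mathfrak{N}(G)|^3\,|G|^{|G|}+1$$
then bounds the count strictly for all $m$.

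The one point needing care is how $m$ enters the classification: the defining condition of an $(LMNm)$-homomorphism involves the power map $g\mapsto g^{1-m}N$ on $M$, which genuinely depends on $m$. However this dependence can only forbid certain quadruples, never manufacture new ones, so overcounting by the $m$-free quantity $|L/N|^{|L|}$ is legitimate and yields the uniform bound. The conceptual heart of the matter — and the reason boundedness holds at all — is that permutation invariance collapses all the coordinate data of $K$ into the single group $G$ (three normal subgroups and one homomorphism), in sharp contrast to a general subgroup of $G^m$, whose Goursat-type data (Theorem \ref{goursats lemma}) proliferates with $m$. I expect no serious obstacle beyond handling the exceptional cases $m\leq 2$, where Theorem \ref{inv norm sbgps thm} does not apply and one simply invokes the finiteness of $G^m$ directly.
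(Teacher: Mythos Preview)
Your proposal is correct and follows essentially the same route as the paper: reduce to $m\geq 3$, invoke Theorem \ref{inv norm sbgps thm} to parametrize invariant normal subgroups of $G^m$ by $m$-invariant quadruples $(L,M,N,\phi)$, and observe that these quadruples lie in a finite set depending only on $G$ (the paper calls this set $Z$), with the $m$-dependence appearing only as a constraint. Your version is slightly more explicit in supplying the numerical bound $|\mathfrak{N}(G)|^3\,|G|^{|G|}$, but the argument is the same.
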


\begin{proof}
Notice that as for each $m$ the group $G^m$ is finite it follows that there are only finitely many subgroups of $G^m.$  Therefore it suffices to prove the claim for $m$ sufficiently large, which in this case is at least $3.$  Let
$$Z=\{(L,M,N,\theta)\ |\ N\leq M\leq L,\ N,M,L\trianglelefteq G,\ \phi:L\rightarrow L/N\}.$$
By Theorem \ref{inv norm sbgps thm} we have that invariant normal subgroups of $G^m$ are determined by $m$-invariant quadruples. For each $m$ we have that $\iqm/$ (the set of $m$-invariant quadruples) is a subset of $Z.$ Thus to show that the number of invariant normal subgroups is bounded we shall show that $Z$ is finite.

We note first that if $G$ is finite then the lattice of normal subgroups of $G$ is finite.  Thus there are finitely many chains of subgroups of length $3,$ so there are finitely many possibilities for $L,M,N.$ For each choice of $L,M,N$ since $L$ and $L/N$ are finite there are finitely many homomorphisms $L\rightarrow L/N.$   Therefore $Z$ is finite so for each $m$ as $|\iqm/|\leq |Z|,$ the the number of $m$-invariant subgroups of $G^m$ is bounded.
\end{proof}

In general for a finite group $G$ it is difficult to calculate precise values or even efficient bounds for $\lambda_1(G),$ and is similarly hard to compute precise values for $|\iqm/|.$ A significant factor in both these calculations is the number of normal subgroups of $G.$ We can extend Proposition \ref{finitesubs} to normal subgroups of $G\wr\mathcal{S}_m.$

\begin{corollary}\label{finitesubs2}
Let $G$ be a finite group, then there is finite number $\lambda_2(G)$ such that for all $m$ the number of normal subgroups of $G\wr\mathcal{S}_m$ is at most $\lambda_2(G).$
\end{corollary}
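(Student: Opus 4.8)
The goal is to bound the number of normal subgroups of $G\wr\mathcal{S}_m$ uniformly in $m$, given that $G$ is finite. I have already established in Theorem~\ref{subgroups of gws} a complete classification of these normal subgroups into two types, so the natural plan is to bound the count of each type separately and then add the two bounds.

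For the first type, the normal subgroups of the form $\{(k,1)\mid k\in K\}$ where $K\trianglelefteq G^m$ is an invariant normal subgroup, the count is exactly the number of invariant normal subgroups of $G^m$. This is precisely the quantity bounded by $\lambda_1(G)$ in Proposition~\ref{finitesubs}, so this contributes at most $\lambda_1(G)$ subgroups regardless of $m$. For the second type, the data consists of a triple $(N,Q,\xi)$ where $N\trianglelefteq G$ with $G/N$ abelian, $Q\trianglelefteq \mathcal{S}_m$ nontrivial, and $\xi\colon Q\rightarrow G/N$ a homomorphism. The key observation is that each of these pieces ranges over a set whose size I can control independently of $m$. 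There are at most $|\mathfrak{N}(G)|$ choices for $N$ (finite since $G$ is finite). The crucial point is that $\mathcal{S}_m$ has only finitely many normal subgroups --- for $m\geq 5$ these are just $\{1\}, A_m, \mathcal{S}_m$, and for small $m$ there are finitely many as well --- so the number of choices for $Q$ is bounded by a universal constant (at most $4$, say, accounting for small exceptional cases). Finally, for each fixed $N$ and $Q$, the number of homomorphisms $\xi\colon Q\rightarrow G/N$ is at most $|G/N|^{d}$ where $d$ is the number of generators of $Q$; since $G/N$ is a quotient of the fixed finite group $G$ and $Q$ is a quotient-bounded piece, this too is bounded independently of $m$.

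The plan is therefore to multiply these bounds: the number of second-type subgroups is at most $|\mathfrak{N}(G)|\cdot c\cdot \max_{N}|G/N|^{e}$ for appropriate universal constants $c,e$ coming from the finitely many possible $(Q,\text{generator count})$ data, and then set $\lambda_2(G)$ to be $\lambda_1(G)$ plus this quantity. I would package the homomorphism count by noting that any homomorphism from $Q$ into the \emph{abelian} group $G/N$ factors through the abelianization $Q^{\mathrm{ab}}$, and that the relevant $Q$ (being a normal subgroup of some $\mathcal{S}_m$) has an abelianization drawn from a fixed finite list --- indeed $A_m^{\mathrm{ab}}$ is trivial for $m\geq 5$, so for large $m$ the only nontrivial homomorphisms arise from $Q=\mathcal{S}_m$ with $\mathcal{S}_m^{\mathrm{ab}}\cong \mathbb{Z}/2$, giving at most $|G/N|$ (in fact at most the number of index-$2$ considerations) choices for $\xi$.

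The main obstacle is the bookkeeping for small $m$ and the finitely many exceptional normal subgroups of the symmetric group (the Klein four subgroup in $\mathcal{S}_4$, and the cases $m=2,3,4$), which do not fit the clean $\{1\},A_m,\mathcal{S}_m$ pattern. I expect to handle this by invoking Proposition~\ref{finitesubs}'s own reduction: since the full group $G\wr\mathcal{S}_m$ is finite for each fixed $m$, only finitely many of these exceptional values of $m$ contribute, and each contributes only finitely many normal subgroups, so they can be absorbed into the constant $\lambda_2(G)$ without affecting the uniform bound for large $m$. Thus the real content is the uniform bound for $m\geq 5$, where the structure of $\mathfrak{N}(\mathcal{S}_m)$ is completely rigid, and the exceptional small cases are swept into the constant exactly as in the proof of Proposition~\ref{finitesubs}.
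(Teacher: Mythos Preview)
Your proposal is correct and follows essentially the same approach as the paper: split into the two types from Theorem~\ref{subgroups of gws}, bound type~(i) by $\lambda_1(G)$ via Proposition~\ref{finitesubs}, bound type~(ii) by counting triples $(N,Q,\xi)$ using the rigidity of $\mathfrak{N}(\mathcal{S}_m)$ for $m\geq 5$ and the abelianization of $Q$, and absorb the finitely many small $m$ into the constant. The only difference is cosmetic: the paper computes the explicit formula $\lambda_2(G)=\lambda_1(G)+\sum_{N\trianglelefteq G,\ [G,G]\subseteq N}(r_N+2)$ (with $r_N$ the number of involutions in $G/N$), whereas you argue more abstractly that each factor is bounded.
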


\begin{proof}
As $G\wr\mathcal{S}_m$ is finite for each $m$ it again suffices to prove the result for $m$ sufficiently large, here at least $5.$ By Theorem \ref{subgroups of gws} all subgroups of $G\wr\mathcal{S}_m$ are of one of two types, either $\{(k,1)\mid k\in K\}$ for $K$ an invariant normal subgroup of $G^m,$ or $ \{ (g,q)\mid q\in G,\ (gK)\Xi= q\xi\}$ where $K=\mathbf{K}(G,G,N,x\mapsto x^{-n}),$ $Q\trianglelefteq \mathcal{S}_m$ is non trivial, $\xi\colon Q\rightarrow G/N$ is a homomorphism and $\Xi\colon G^m/K\rightarrow G/N$ is defined $(g_1,\dots,g_m)K\mapsto (g_1\dots g_m)N.$

By Lemma \ref{finitesubs} there are at most $\lambda_1(G)$ invariant normal subgroups of $G^m$ so there at most $\lambda_1(G)$ normal subgroups of $G\wr\mathcal{S}_m$ of the first type.  Also, there are finitely many normal subgroups $N\trianglelefteq G$ which have $G/N$ abelian. When $G/N$ is abelian and $Q\trianglelefteq \mathcal{S}_m$ is non trivial, a homomorphism $\xi: Q\rightarrow G/N$ is the trivial map if $Q=\mathcal{A}_m,$ and if $Q=\mathcal{S}_m$ then there is precisely one homomorphism for each element of $G/N$ of order at most $2$ (here we assume without loss of generality that $m\geq 5$ so that $\mathcal{A}_m$ is simple). Thus for each $N\trianglelefteq G$ the number of homomorphisms $\xi:Q\rightarrow G/N$ with $Q\trianglelefteq \Sn/$ non trivial is exactly $r_N + 2$ where $r_N$ is the number of elements of order $2$ in $G/N,$ therefore there are at most $r_N+2$ normal subgroups of $\mathcal{S}_m$ arising from this $N.$

We have shown that there are at most 
$$\lambda_2(G)=\lambda_1(G)+ \sum_{N\trianglelefteq G,\ [G,G]\subseteq N} (r_N+2)$$
normal subgroups of $G\wr\mathcal{S}_m$ for each $m$ and, since there are finitely many normal subgroups of $G,$ $\lambda_2(G)$ is finite.
\end{proof}

The following is a standard elementary combinatorial result, we state it here as we shall refer to it frequently.

\begin{lemma}\label{increasing chains}
Let $C$ be an increasing chain of length $c,$ and let $v_k$ be the number of sequences $t_1\leq t_2\leq\dots\leq t_k$ of length $k$ where each $t_i \in C.$ Then
$$v_k=\binom{k+c-1}{c-1}.$$
Moreover there are $A,B\in \mathbb{N}$ such that for all $k$
$$Ak^{c-1}\leq v_k\leq Bk^{c-1}$$
\end{lemma}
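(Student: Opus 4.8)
The plan is to establish the exact count by a bijection with multisets and then read off the polynomial bounds from the resulting product formula. Write the chain as $C=\{c_1<c_2<\dots<c_c\}$, a linearly ordered set with $c$ elements. A non-decreasing sequence $t_1\leq t_2\leq\dots\leq t_k$ with entries in $C$ is determined precisely by how many of its entries equal each $c_i$: setting $n_i$ to be the number of indices $j$ with $t_j=c_i$ gives a tuple $(n_1,\dots,n_c)$ of non-negative integers with $n_1+\dots+n_c=k$, and conversely every such tuple arises from a unique non-decreasing sequence (list $c_1$ exactly $n_1$ times, then $c_2$ exactly $n_2$ times, and so on). Hence $v_k$ equals the number of ways to write $k$ as an ordered sum of $c$ non-negative integers.

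First I would count these solutions by the standard stars-and-bars bijection: a solution $(n_1,\dots,n_c)$ corresponds to a string of $k$ stars and $c-1$ bars, and the string is determined by the choice of positions of the $c-1$ bars among the $k+c-1$ symbols. This yields $v_k=\binom{k+c-1}{c-1}$, establishing the first assertion. Equivalently, one may check directly that $(t_1,\dots,t_k)\mapsto(t_1,t_2+1,\dots,t_k+(k-1))$ is a bijection from non-decreasing $k$-tuples onto strictly increasing $k$-tuples in $\{c_1,\dots,c_{c+k-1}\}$, i.e. onto $k$-subsets of a set of size $c+k-1$, giving the same count $\binom{c+k-1}{k}=\binom{k+c-1}{c-1}$.

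For the bounds I would use the factored form
$$v_k=\binom{k+c-1}{c-1}=\frac{1}{(c-1)!}\prod_{j=1}^{c-1}(k+j),$$
which exhibits $v_k$ as a polynomial in $k$ of degree $c-1$ with positive leading coefficient $1/(c-1)!$. For $k\geq 1$ each of the $c-1$ factors satisfies $k\leq k+j\leq ck$, the upper inequality holding because $j\leq c-1\leq (c-1)k$. Multiplying these bounds over all $j$ gives
$$\frac{k^{c-1}}{(c-1)!}\leq v_k\leq \frac{c^{c-1}}{(c-1)!}\,k^{c-1},$$
so one may take $A=1/(c-1)!$ and $B=c^{c-1}/(c-1)!$ (rounding $B$ up to an integer if desired) to obtain $Ak^{c-1}\leq v_k\leq Bk^{c-1}$ for all $k\geq 1$.

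There is no substantive obstacle: the content is entirely the identification of non-decreasing sequences with multisets, after which both the exact formula and the $\Theta(k^{c-1})$ growth are immediate. The only point worth a word of care is that the natural lower constant $1/(c-1)!$ is strictly less than $1$ for $c\geq 3$, so the lower bound is genuinely of the form \emph{(positive constant)} $\cdot\, k^{c-1}$ rather than $k^{c-1}$ itself; the stated inequalities should accordingly be read with $A$ a positive constant, and an integer bound is available for the upper estimate while the lower estimate requires the positive rational $1/(c-1)!$.
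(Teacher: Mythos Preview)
Your argument is correct and is the standard one; the paper does not actually prove this lemma, merely stating it as ``a standard elementary combinatorial result''. So there is nothing to compare against.

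One remark worth keeping: your closing observation is exactly right and in fact identifies a small slip in the statement as written. The lemma asserts $A\in\mathbb{N}$, but for $c\geq 3$ no positive integer $A$ can satisfy $Ak^{c-1}\leq v_k$ for \emph{all} $k$: already at $k=2$ one has $v_2=\binom{c+1}{2}=c(c+1)/2$, which is smaller than $2^{c-1}$ once $c$ is moderately large (e.g.\ $c=10$ gives $v_2=55<512=2^9$). The intended meaning, and the way the lemma is used later in the paper (in Proposition~\ref{idemsize} and the surrounding asymptotic estimates), is that $A$ is a positive constant depending only on $c$; your choice $A=1/(c-1)!$ is precisely what is needed.
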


By Proposition \ref{idemsep congs} idempotent separating congruences correspond to closed sets of invariant normal subgroups, and by Theorem \ref{inv norm sbgps thm} each invariant normal subgroup is of the form 
$$\mathbf{K}_m(L,M,N,\phi)=\{ (g_1,\dots,g_m)\in L^m \ |\ g_1M~\textequal~\dots~\textequal~g_mM,\ g_1\phi~\textequal~ g_1^{2-m}g_2g_3\dots g_m N\}$$
for an $m$-invariant quadruple $(L,M,N,\phi).$ The projection onto the first $(m-1)$ coordinates (though any choice of $m-1$ coordinates is equivalent) is the set
$$\{(g_1,\dots,g_{m-1})\in L^{m-1}\ |\ g_1M=\dots=g_{m-1}M\}= \mathbf{K}_{m-1}(L,M,M,x\mapsto xM).$$
The following lemma is an immediate consequence.

\begin{lemma}
Let $G$ be a group, and $\{ \mathbf{K}_i(L_i,M_i,N_i,\phi_i)\trianglelefteq G^i\mid 1\leq i\leq n\}$ a closed set of invariant normal subgroups.  Then $L_i\subseteq L_{i-1}$ and $M_i\subseteq N_{i-1}$ for each $2\leq i\leq n.$
\end{lemma}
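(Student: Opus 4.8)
The plan is to unpack what the closure condition $\mathbf{K}_i(L_i,M_i,N_i,\phi_i)\pi_i \subseteq \mathbf{K}_{i-1}(L_{i-1},M_{i-1},N_{i-1},\phi_{i-1})$ means concretely, using the explicit computation of the projection that immediately precedes the statement. First I would recall that by the displayed computation just above the lemma, the projection of $\mathbf{K}_i(L_i,M_i,N_i,\phi_i)$ onto the first $(i-1)$ coordinates equals $\mathbf{K}_{i-1}(L_i,M_i,M_i,x\mapsto xM_i)$. So the closure hypothesis reads
\begin{equation*}
\mathbf{K}_{i-1}(L_i,M_i,M_i,x\mapsto xM_i)\subseteq \mathbf{K}_{i-1}(L_{i-1},M_{i-1},N_{i-1},\phi_{i-1}).
\end{equation*}
The strategy is then to apply the ordering criterion for inclusion of invariant normal subgroups stated in the paragraph following Theorem~\ref{inv norm sbgps thm}: namely, $\mathbf{K}_m(L',M',N',\phi')\subseteq \mathbf{K}_m(L'',M'',N'',\phi'')$ holds exactly when $L'\subseteq L''$, $M'\subseteq M''$, $N'\subseteq N''$, and $l\phi'\subseteq l\phi''$ for all $l\in L'$.

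Reading off the first two of those four conditions for our inclusion gives precisely $L_i\subseteq L_{i-1}$ and $M_i\subseteq M_{i-1}$, which already yields one of the two claimed inclusions. To obtain the sharper statement $M_i\subseteq N_{i-1}$, I would use the third condition together with the specific form of the projected quadruple. Here the ``$N$'' component of the left-hand group is $M_i$ (the projection replaces $N$ by $M$), so the condition $N'\subseteq N''$ becomes $M_i\subseteq N_{i-1}$, which is exactly the desired conclusion; since $N_{i-1}\subseteq M_{i-1}$, this is strictly stronger than $M_i\subseteq M_{i-1}$ and subsumes it. Thus both claimed inclusions fall out of the inclusion ordering applied to the explicit projection.

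The only subtlety to verify carefully is that the projection really does have the stated form $\mathbf{K}_{i-1}(L_i,M_i,M_i,x\mapsto xM_i)$ — that is, that dropping a coordinate genuinely turns the defining-group component $N_i$ into $M_i$ and the homomorphism into the quotient map $x\mapsto xM_i$. This is asserted in the display preceding the lemma (the set $\{(g_1,\dots,g_{m-1})\in L^{m-1}\mid g_1M=\dots=g_{m-1}M\}$), and it is geometrically clear: after projecting away one coordinate the only surviving constraint among the remaining $g_j$ is that they share a common coset of $M_i$, since the original $\phi$-constraint can always be satisfied by a free choice of the deleted coordinate. I do not expect genuine difficulty here, but this identification of the projected quadruple is the one step that does real work; everything after it is a direct application of the componentwise inclusion criterion, so I would be careful to cite the projection computation explicitly rather than re-deriving it.
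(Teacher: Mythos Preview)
Your proposal is correct and follows essentially the same approach as the paper: the paper states the lemma as ``an immediate consequence'' of the displayed projection formula $\mathbf{K}_m(L,M,N,\phi)\pi_m=\mathbf{K}_{m-1}(L,M,M,x\mapsto xM)$, and the intended argument is precisely to combine this with the componentwise inclusion criterion given after Theorem~\ref{inv norm sbgps thm}, exactly as you outline. Your identification of the third condition ($N'\subseteq N''$ becoming $M_i\subseteq N_{i-1}$) as the source of the sharper inclusion is the heart of the matter, and matches the paper's reasoning.
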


In this way, to each idempotent separating congruence on $\gwi/$ we associate a set of normal subgroups $\{L_i,M_i,N_i\ |\ i=1,\dots,n\}$ of $G$ which is ordered as shown in Figure \ref{lattsubs}, where the arrows denote subset inclusion.  We will refer to a lattice arising in this way from a congruence as a {\em (congruence) induced lattice}, and say that the congruence {\em induces} the lattice.

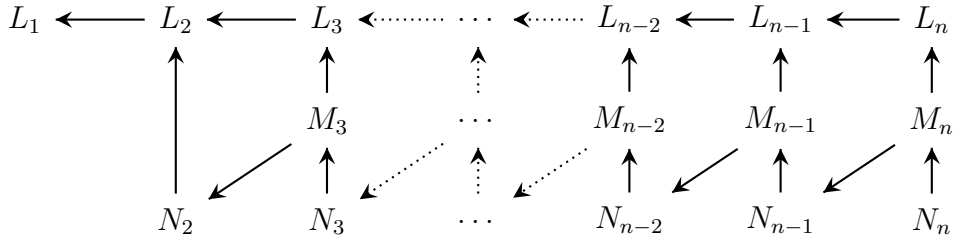
\begin{figure}[!ht]
\centering

\begin{tikzpicture}[scale=.67,-,auto,thick,main node/.style={square},normal node/.style={inner sep=5pt,minimum size=20pt}]

\node[normal node] (a1) at (15,0){$N_{n}$};
\node[normal node] (a2) at (15,2){$M_{n}$};
\node[normal node] (a3) at (15,4){$L_{n}$};

\node[normal node] (b1) at (12,0){$N_{n-1}$};
\node[normal node] (b2) at (12,2){$M_{n-1}$};
\node[normal node] (b3) at (12,4){$L_{n-1}$};

\node[normal node] (c1) at (9,0){$N_{n-2}$};
\node[normal node] (c2) at (9,2){$M_{n-2}$};
\node[normal node] (c3) at (9,4){$L_{n-2}$};

\node[normal node] (d1) at (6,0){$\dots$};
\node[normal node] (d2) at (6,2){$\dots$};
\node[normal node] (d3) at (6,4){$\dots$};

\node[normal node] (e1) at (3,0){$N_{3}$};
\node[normal node] (e2) at (3,2){$M_{3}$};
\node[normal node] (e3) at (3,4){$L_{3}$};

\node[normal node] (f1) at (0,0){$N_{2}$};
\node[normal node] (f3) at (0,4){$L_{2}$};

\node[normal node] (g3) at (-3,4){$L_{1}$};

\tikzset{myptr/.style={decoration={markings,mark=at position 1 with %
    {\arrow[scale=1.5,>=stealth]{>}}},postaction={decorate}}}
\tikzset{dots/.style={dotted,decoration={markings,mark=at position 1 with %
    {\arrow[scale=1.5,>=stealth]{>}}},postaction={decorate}}}
    
\draw [myptr] (a1) -- (a2);
\draw [myptr] (a2) -- (a3);
\draw [myptr] (a3) -- (b3);
\draw [myptr] (a2) -- (b1);

\draw [myptr] (b1) -- (b2);
\draw [myptr] (b2) -- (b3);
\draw [myptr] (b3) -- (c3);
\draw [myptr] (b2) -- (c1);

\draw [myptr] (c1) -- (c2);
\draw [myptr] (c2) -- (c3);
\draw [dots] (c3) -- (d3);
\draw [dots] (c2) -- (d1);

\draw [dots] (d1) -- (d2);
\draw [dots] (d2) -- (d3);
\draw [dots] (d3) -- (e3);
\draw [dots] (d2) -- (e1);

\draw [myptr] (e1) -- (e2);
\draw [myptr] (e2) -- (e3);
\draw [myptr] (e3) -- (f3);
\draw [myptr] (e2) -- (f1);

\draw [myptr] (f1) -- (f3);
\draw [myptr] (f3) -- (g3);

\end{tikzpicture}
\caption{\small Lattice of normal subgroups of $G$ induced by a congruence on $\gwi/$}
\label{lattsubs}
\end{figure}

When $G$ is finite by Lemma \ref{finitesubs} for each $m$ there are at most $\lambda_1(G)$ invariant normal subgroups of $G^m.$  As an idempotent separating congruence is determined by $n$ invariant normal subgroups (one for each $1\leq i\leq n$) this implies that $|\mathfrak{C}_{IS}(\gwi/)|\leq \lambda_1(G)^n.$  However we can significantly improve on this bound for large $n.$  For a group $G,$ a maximal strictly increasing chain of normal subgroups is called a {\em chief series,} and the maximum length of a chief series is the {\em chief length} which we write $c(G).$

\begin{proposition}\label{idemsize}
Let $G$ be a finite group with $c(G)=c.$ Then there are $A,B\in \mathbb{N}$ such that $$An^{c-1}\leq |\mathfrak{C}_{IS}(\gwi/)|\leq Bn^{2(c-1)}.$$
\end{proposition}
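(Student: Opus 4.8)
The plan is to treat the two bounds separately, using throughout the correspondence of Proposition \ref{idemsep congs} and Theorem \ref{inv norm sbgps thm} between idempotent separating congruences on $\gwi/$ and closed sets of invariant normal subgroups $\{T_i=\mathbf{K}_i(L_i,M_i,N_i,\phi_i)\trianglelefteq G^i\mid 1\leq i\leq n\}$; equivalently, the induced lattice of normal subgroups of $G$ depicted in Figure \ref{lattsubs}, decorated by the homomorphisms $\phi_i$. Fix once and for all a chief series $H_0\lhd H_1\lhd\dots\lhd H_{c-1}=G$ of $G$, which by definition has $c$ terms.

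For the lower bound I would exhibit an explicit injective family. For a weakly decreasing function $f\colon[n]\to\{0,1,\dots,c-1\}$ set $T_i=H_{f(i)}^{\,i}$. Each $H_j^{\,i}$ is an invariant normal subgroup of $G^i$ with $i$-invariant quadruple $(H_j,H_j,H_j,\mathrm{triv})$, the quadruple conditions holding because $H_j\trianglelefteq G$ forces $[G,H_j]\subseteq H_j$ and the trivial homomorphism is then an $(H_jH_jH_ji)$-homomorphism. Closedness of the family is exactly the monotonicity of $f$, since $H_{f(i)}^{\,i}\pi_i=H_{f(i)}^{\,i-1}\subseteq H_{f(i-1)}^{\,i-1}$ if and only if $f(i)\leq f(i-1)$. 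Distinct $f$ produce distinct defining groups and hence distinct congruences, so $|\mathfrak{C}_{IS}(\gwi/)|$ is at least the number of such $f$, which by Lemma \ref{increasing chains} equals $\binom{n+c-1}{c-1}\geq An^{c-1}$.

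For the upper bound I would count the data $\{(L_i,M_i,N_i,\phi_i)\}$ in two stages. The subgroup part comes first: the interleaving inclusions $N_i\subseteq M_i\subseteq N_{i-1}$ from Figure \ref{lattsubs} show that $(N_n,M_n,N_{n-1},M_{n-1},\dots,N_2,M_2)$ is a single weakly increasing sequence of length at most $2n$ in $\mathfrak{N}(G)$, while $(L_1,\dots,L_n)$ is a separate weakly decreasing sequence. As $\mathfrak{N}(G)$ has only finitely many chains, each of at most $c$ terms, Lemma \ref{increasing chains} bounds the number of monotone sequences of either kind by $O(n^{c-1})$; since these two sequences together determine every triple $(L_i,M_i,N_i)$, there are $O(n^{2(c-1)})$ possibilities for the subgroup part.

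The crux is to show that the homomorphisms contribute only a bounded factor rather than an exponential one, and I expect this to be the main obstacle, since a naive level-by-level count would wrongly yield $D(G)^n$. Here I would exploit closedness on the $\phi$-coordinate: by the projection formula $\mathbf{K}_i(L_i,M_i,N_i,\phi_i)\pi_i=\mathbf{K}_{i-1}(L_i,M_i,M_i,x\mapsto xM_i)$ together with the ordering criterion for the $\mathbf{K}$'s, containment in $T_{i-1}$ requires $lM_i\subseteq l\phi_{i-1}$ for all $l\in L_i$; as $M_i\subseteq N_{i-1}$ this forces $\phi_{i-1}(l)=lN_{i-1}$, so $\phi_{i-1}$ agrees with the natural quotient map on $L_i$. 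In particular, whenever $L_i=L_{i-1}$ the homomorphism $\phi_{i-1}$ is completely determined. Since $(L_i)$ is monotone through a chain of at most $c$ terms it drops at most $c-1$ times, so all but at most $c$ of the $\phi_i$ are forced, and at each of the remaining indices there are only boundedly many choices, say at most a constant $D(G)$ depending on $G$ alone. Hence the homomorphisms multiply the count by at most $D(G)^{c}$, giving $|\mathfrak{C}_{IS}(\gwi/)|\leq D(G)^{c}\cdot O(n^{2(c-1)})\leq Bn^{2(c-1)}$.
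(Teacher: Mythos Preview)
Your argument is correct and follows the same overall skeleton as the paper: the lower bound via the explicit family $\{L_i^{\,i}\}$ for a decreasing chain $(L_i)$, and the upper bound by separately counting the two monotone sequences in $\mathfrak{N}(G)$ and then arguing that only boundedly many $\phi_i$ are free. The one substantive difference is in \emph{which} structural fact pins down most of the homomorphisms. The paper observes that, since the interleaved chain $N_n\leq M_n\leq N_{n-1}\leq\cdots$ has at most $c-1$ strict steps, one has $M_i=N_i$ for all but $c-1$ indices, and for such $i$ the definition of an $(L_iM_iN_ii)$-homomorphism forces $\phi_i$ to be the natural quotient map; the low-rank levels are absorbed into a $\lambda_1(G)$ factor. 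You instead use closedness directly: from $T_i\pi_i\subseteq T_{i-1}$ and the ordering criterion you deduce $l\phi_{i-1}=lN_{i-1}$ for all $l\in L_i$, so $\phi_{i-1}$ is determined whenever $L_i=L_{i-1}$, leaving at most $c$ free choices (the drops of the $L$-chain together with $\phi_n$). Both mechanisms are valid; yours has the mild advantage of reading the constraint straight off the closedness condition rather than unpacking the $(LMNm)$-homomorphism axioms, while the paper's version makes the role of the $M_i=N_i$ equalities (and hence of the second chain) more transparent. One cosmetic point: the quadruple description only applies for $i\geq 3$, so strictly speaking the levels $i=1,2$ need separate bookkeeping (the paper handles this via the $\lambda_1(G)$ factor), but this affects only the constant, not the argument.
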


\begin{proof}
This result concerns asymptotic behaviour of $|\mathfrak{C}(\gwi/)|$ so we may assume that $n$ is much larger than $c.$ First we demonstrate the lower bound. Notice that for $L\trianglelefteq G$ the group $\mathbf{K}_i(L,L,L,l\mapsto L)=L^i$ is an invariant normal subgroup of $G^i$ for each $i.$  Thus for each sequence of normal subgroups $L_n\leq L_{n-1}\leq\dots\leq L_1$ the set $\{L_i^i\mid 1\leq i\leq n\}$  is a closed set of invariant normal subgroups, so $\chi(L_1,L_2^2,\dots,L_n^n)$ is an idempotent separating congruence. Different sequences of normal subgroups of $G$ give different congruences.  By Lemma \ref{increasing chains} there is some $A$ such there are at least $An^{c-1}$ sequences of normal subgroups, so we have that $An^{c-1}\leq |\mathfrak{C}_{IS}(\gwi/)|.$
 
In order to prove the upper bound we first show that for an induced lattice $Y,$ there is an upper bound to the number of idempotent separating congruences which induce $Y.$  Let $\{L_i,M_i,N_i\mid 3\leq i\leq n\}\cup \{N_2,L_2,L_1\}$ be the labels of the vertices in $Y,$ so $N_n\leq M_n\leq N_{n-1}\leq \dots\leq N_3\leq M_3$ is a sequence of normal subgroups.  As $c(G)=c$ there are at most $c-1$ values of $i$ for which $N_i\neq M_i.$ When $N_i=M_i$ if $\mathbf{K}_i(L_i,N_i,N_i,\phi)$ is an invariant normal subgroup then $\phi:L_i\rightarrow L_i/N_i$ is the standard quotient homomorphism, so when $M_i=N_i$ there is precisely one invariant normal subgroup $K\trianglelefteq G^i$ with $\mathbf{L}(K)=L_i,$ $\mathbf{M}(K)=M_i$ and $\mathbf{N}(K)=N_i$  

Let $q$ be the largest number of homomorphisms $L\rightarrow L/N$ where we vary $L$ and $N$ over normal subgroups of $G.$ By Proposition \ref{finitesubs} there are fewer than $\lambda_1(G)$ invariant normal subgroups $K\trianglelefteq G^2,$ so at most $\lambda_1(G)$ invariant normal subgroups that have $N_2=\{g\in G\mid (g,1)\in K\}.$ Hence there are at most $\lambda_1(G)q^{c-1}$ idempotent separating congruences which induce $Y.$  Thus it suffices to show that there are at most $B^\prime n^{c-1}$ induced lattices.

To this end notice that a congruence induced lattice (with ordering as shown in Figure \ref{lattsubs}) can be decomposed into two sequences.  The first $L_n\leq L_{n-1}\leq\dots\leq L_2\leq L_1$ of length $n,$ and the second $N_n\leq M_n\leq N_{n-1}\leq M_{n-1}\leq\dots\leq M_3\leq N_2$ of length $2n-3.$  When we ignore repeats in these sequences the resulting chains are each subchains of chief series.

Since $G$ is a finite group there are finitely many chief series; say that there are $r$ chief series, then there are $r^2$ pairs of chief series.  On the other hand by Lemma \ref{increasing chains} the number of sequences of length $k$ arising from a chain of length $x$ is bounded above by $Dk^{x-1}$ for some $D\in \mathbb{N}.$  Thus given a pair chief series (which each have maximum length $c$), there are fewer than $(D(2n-3)^{c-1})(Dn^{c-1})$ distinct pairs of sequences, of lengths $n$ and $2n-3$ respectively, which reduce to subchains of this pair of chief series when repeats are ignored.  It follows that there are at most $r^2(D(2n-3)^{c-1})(Dn^{c-1})$ congruence induced lattices. Hence we have that
$$|\mathfrak{C}_{IS}(\gwi/)|\leq \lambda_1(G)q^{c-1}r^2(D(2n-3)^{c-1})(Dn^{c-1}) \leq (\lambda_1(G)q^{c-1}D^2r^22^{c-1})n^{2(c-1)},$$
and $(\lambda_1(G)q^{c-1}D^2r^22^{c-1})=B$ is a constant determined by $G$.  This completes the proof of the result. 
\end{proof}

This demonstrates that number of idempotent separating congruences on $\gwi/$ for a finite group is related to the chief length of $G$. Up to order of the polynomial these bounds are in general the best possible, there are groups with arbitrarily large chief length that attain either the maximum or the minimum order growth for $|\mathfrak{C}_{IS}(\gwi/)|$ from Proposition \ref{idemsize}. 

\begin{example}\label{example max}
For the maximum growth of $|\mathfrak{C}(\gwi/)|$ we consider the group $G=\mathbb{Z}_2^x,$ which has chief length $c(G)=x.$  For each $X\leq Y\leq G$ and for each $i,$ $\mathbf{K}_i(X,Y,Y,x\mapsto xY)\trianglelefteq G^i$ is an invariant normal subgroup.  Let $C$ be a chief series of length $x,$ then by Lemma \ref{increasing chains} there are at least $A^\prime(n/2)^{x-1}=An^{x-1}$ sequences of subgroups of length $n/2$ which reduce to a subchain of $C$ when repeats are ignored.  For a pair of decreasing sequences of subgroups $\{W_i\mid 1\leq i\leq n/2\}$ and $\{Y_i\mid 1\leq i \leq n/2\},$ define $K_i=\mathbf{K}_i(G,W_i,W_i,g\mapsto gW_i)$ and $K_{n/2+i}=\mathbf{K}_{n/2+i}(Y_i,\{1\},\{1\},y\mapsto y)$ for $1\leq i\leq n/2.$ Then $\{K_i\mid 1\leq i\leq n\}$ is a closed set of invariant normal subgroups so defines an idempotent separating congruence. Moreover different choices of $\{W_i\mid 1\leq i\leq n/2\}$ and $\{Y_i\mid 1\leq i \leq n/2\}$ give distinct congruences.  Thus $|\mathfrak{C}(\gwi/)|\geq (An^{x-1})^2=A^2n^{2(x-1)}.$
\end{example}

\begin{example}\label{example min}
For a group that attains the minimum growth for $|\mathfrak{C}_{IS}(\gwi/)|$ we consider the group $G=\mathcal{A}_5^x.$  We note that if $J\trianglelefteq G$ then $J=J_1\times\dots\times J_x$ for $J_i\trianglelefteq\mathcal{A}_5.$ Moreover if also $N\trianglelefteq G$ with $J/N$ abelian then $J=N.$  Thus the only invariant normal subgroups of $G^m$ are of the form $\mathbf{K}_m(L,L,L,l\mapsto L).$  Hence idempotent separating congruences on $\gwi/$ exactly correspond to chains of normal subgroups of $G$ of length $n,$ of which by Lemma \ref{increasing chains} there are at most $Bn^{x-1}.$
\end{example}

Since the chief length plays an important role in the size of $|\mathfrak{C}(\gwi/)|$ is worth noting that in general it is not possible to do better than the trivial bound on the chief length; that is that $c(G)$ is at most the number of prime factors (counted with multiplicity) of $|G|.$ 

\begin{theorem}
Let $G$ be a finite group $c(G)=c.$  Then there are $A,B\in \mathbb{N}$ such that
$$An^{c}\leq |\mathfrak{C}(\gwi/)|\leq Bn^{2c-1}.$$
\end{theorem}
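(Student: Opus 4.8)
The plan is to combine the count of idempotent separating congruences from Proposition \ref{idemsize} with the count of congruences arising from the principal factors, namely the normal subgroups of $G\wr\mathcal{S}_m$ classified in Theorem \ref{subgroups of gws}. By Theorem \ref{cong decomp} every congruence on $\gwi/$ is determined by a triple $(m,\{T_i\mid m+1\leq i\leq n\},L)$ where $\{T_i\}$ is a closed set of invariant normal subgroups and $L\leq G\wr\mathcal{S}_m.$ So the total count is controlled by the number of choices of the ``tail'' $\{T_i\mid m+1\leq i\leq n\}$ — which is essentially an idempotent separating congruence on a smaller partial wreath product — times the number of admissible subgroups $L$ for each fixed $m$ and compatible $T_{m+1}\pi_{m+1}.$

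For the lower bound I would exhibit a family of congruences large enough to force the $n^{c}$ factor. The extra power of $n$ over Proposition \ref{idemsize} (which gave $An^{c-1}$) comes from the freedom to choose the rank $m$ together with the subgroup $L\trianglelefteq G\wr\mathcal{S}_m.$ Concretely, I would take the lower-bound construction from Proposition \ref{idemsize} — sequences $L_n\leq\dots\leq L_{m+1}$ giving $An^{c-1}$ idempotent separating pieces — and for each such tail range $m$ over roughly $n$ values, choosing at each $m$ a nontrivial $L$ (for instance one coming from a fixed $Q\trianglelefteq\mathcal{S}_m$ and homomorphism $\xi$ as in Theorem \ref{subgroups of gws}(ii), subject to the compatibility condition $\{(t,1)\mid t\in T_{m+1}\pi_{m+1}\}\leq L$). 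Since distinct choices of $(m,\{T_i\},L)$ give distinct congruences by Theorem \ref{cong decomp}, multiplying the $An^{c-1}$ tail choices by the linear-in-$n$ range of $m$ yields the bound $An^{c}.$

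For the upper bound I would bound the number of congruences of each fixed rank $m$ and then sum over the at most $n+1$ values of $m.$ For fixed $m,$ the tail $\{T_i\mid m+1\leq i\leq n\}$ is counted exactly as in the proof of Proposition \ref{idemsize}, giving at most $Bn^{2(c-1)}$ choices; and by Corollary \ref{finitesubs2} there are at most $\lambda_2(G)$ choices of $L\trianglelefteq G\wr\mathcal{S}_m,$ a constant independent of $m.$ Hence each rank contributes at most $\lambda_2(G)Bn^{2(c-1)}$ congruences, and summing over the $n+1$ ranks gives $O(n^{2c-1}),$ which is the claimed upper bound $Bn^{2c-1}.$

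The main obstacle I anticipate is making the lower-bound exponent genuinely $c$ rather than $c-1$: I must verify that for a linear-in-$n$ range of ranks $m$ there really is a compatible nontrivial choice of $L$ that distinguishes the congruences, and that the compatibility constraint $\{(t,1)\mid t\in T_{m+1}\pi_{m+1}\}\leq L$ does not collapse the count. The cleanest route is to fix the tail groups so that $T_{m+1}\pi_{m+1}$ is trivial (e.g.\ using the $\mathbf{K}_i(Y_i,\{1\},\{1\},y\mapsto y)$ pieces as in Example \ref{example max}), so that every nontrivial normal subgroup $L$ from Theorem \ref{subgroups of gws}(ii) is admissible, thereby decoupling the choice of $L$ from the tail and securing one extra free parameter ranging over $\sim n$ values. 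Care is also needed for small $m$ (the cases $m\leq 2$) where Theorem \ref{subgroups of gws} does not directly apply, but these contribute only finitely many congruences and do not affect the asymptotics.
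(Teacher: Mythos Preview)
Your overall strategy matches the paper's proof: the upper bound is obtained exactly as you describe (multiply the $B'n^{2(c-1)}$ idempotent-separating count from Proposition~\ref{idemsize} by the $n+1$ ranks and the constant $\lambda_2(G)$ from Corollary~\ref{finitesubs2}), and the lower bound comes from a product of $\sim n^{c-1}$ tail choices with a linear-in-$n$ range of ranks~$m$.

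There is one slip in your ``cleanest route'' for the lower bound. The diagonal subgroups $\mathbf{K}_i(Y_i,\{1\},\{1\},y\mapsto y)=\{(y,\dots,y)\mid y\in Y_i\}$ do \emph{not} have trivial projection: $\pi_i$ sends this diagonal to the diagonal of $Y_i$ in $G^{i-1}$, which is trivial only when $Y_i$ is. So you cannot simultaneously vary the $Y_i$ (to get $n^{c-1}$ tails) and have $T_{m+1}\pi_{m+1}$ trivial at all the ranks $m$ you want. The paper sidesteps this by going the opposite way: it forces $T_j=G^j$ for $1\leq j\leq n/2$ and places all the variation in positions $j>n/2$ via $K_j=L_{j-n/2}^{\,j}$. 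Then for every $m\leq n/2$ one has $T_{m+1}\pi_{m+1}=G^m$, and the single choice $L=\{(g;1)\mid g\in G^m\}\trianglelefteq G\wr\mathcal{S}_m$ satisfies the compatibility condition automatically. The $\sim(n/2)^{c-1}$ decreasing sequences of normal subgroups of $G$ (placed in the upper half) times the $n/2$ values of $m$ give the $An^c$ lower bound. Your framework accommodates this fix without difficulty; you simply had the roles of ``trivial'' and ``full'' reversed in the bottom half of the closed sequence.
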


\begin{proof}
The upper bound is straightforward, we note that by Theorem \ref{cong decomp} each congruence $\rho$ decomposes in terms of a Rees congruence, a normal subgroup of $G\wr\mathcal{S}_m$ and an idempotent separating congruence. There are $n+1$ ideals, by Lemma \ref{finitesubs2} at most $\lambda_2(G)$ normal subgroups of $G\wr\mathcal{S}_m$ and by Proposition \ref{idemsize}, at most $B^\prime n^{2(c-1)}$ idempotent separating congruences.  Thus
$$|\mathfrak{C}(\gwi/)|\leq \lambda_2(G)B^\prime (n+1) n^{2(c-1)}= Bn^{2c-1}.$$ 

We now prove the lower bound. Let $\{C_i\mid i\in I\}$ be the set of decreasing sequences of normal subgroups of $G$ of length $n/2.$ We have seen that $|I|\geq A^\prime (n/2)^{c-1}.$ For $C_i=\{L_1\supseteq L_2\supseteq\dots\supseteq L_{n/2}\}$ consider the set $$X_i=\{G,G^2,\dots,G^{n/2},K_1,K_2\dots,K_{n/2}\}$$
where $K_j=\mathbf{K}_{n/2+j}(L_j,L_j,L_j,l\mapsto L_j)=L_j^{n/2+j}$ for $1\leq j\leq n/2.$  This is a closed set of invariant normal subgroups, and for distinct sequences of normal subgroups of $G$ the corresponding idempotent separating congruences are different.  Thus $\rho_{X_i,m}=\rho(m,X_i,\{(g;1)\mid g\in G^m\})$ is a distinct congruence for each $i\in I$ and $1\leq m\leq n/2.$ Thus 
$$|\mathfrak{C}(\gwi/)|\geq A^\prime (n/2)^{c-1}(n/2) = An^c.$$
\end{proof}

As is the case for $\mathfrak{C}_{IS}(\gwi/)$ in general these are the best possible bounds. There are groups that attain the maximum and minimum polynomial growth for the size of the congruence lattice; the groups considered in Examples \ref{example max} and \ref{example min} again attain the maximum and minimum respectively.

\section*{Acknowledgements}
This project forms part of the work toward my PhD at the University of York, supported by EPSRC grant EP/N509802/1.  I would like to thank my supervisor, Professor Victoria Gould, for all her help and guidance during this project.

\begin{small}

\end{small}


\begin{thebibliography}{0}

\bibitem{congs direct prods} J. Ara{ú}jo, W. Bentz and G. M. Gomes, Congruences on direct products of transformation and matrix monoids, {\it Semigroup Forum}  97(3) (2018) 384-416.

\bibitem{general goursats} K. Bauer, D. Sen and P. Zvengrowski, A generalized Goursat lemma, {\it Tatra Mt. Math. Publ.} 64(1) (2015) 1-19.

\bibitem{congs diagram} J. East, J. D. Mitchell, N. Ru{š}kuc, and M. Torpey,  Congruence lattices of finite diagram monoids, {\it Adv. Math.} 333 (2018) 931-1003.

\bibitem{Fountain ind alg} J. Fountain and A Lewin, Products of idempotent endomorphisms of an independence algebra of infinite rank, {\it Math. Proc. Cambridge Philos. Soc.} 114(2) (1993) 303-319. 

\bibitem{Gould ind alg} V. Gould, Independence algebras, {\it Algebra Universalis} 33 (1995) 294-318

\bibitem{goursat} É. Goursat, Sur les substitutions orthogonales et les divisions régulières de l'espace, {\it Ann. Sci. Ec. Norm. Supér.} 6 (1889) 9–102

\bibitem{Howie max idem sep} J. M. Howie, The maximum idempotent-separating congruence on an inverse semigroup, {\it Proc. Edinb. Math. Soc.} 14(1) (1964) 71-79.

\bibitem{jones subsemis} P. R. Jones, Semimodular inverse semigroups, {\it J. Lond. Math. Soc.} 2(3) (1978) 446-456.

\bibitem{Liber congs I_n} A. E. E. Liber, On symmetric generalized groups, {\it Mat. Sbornik} 33(75) (1953) 531-544.

\bibitem{Lima thesis} L. Lima, The local automorphism monoid of an independence algebra, D.Phil. University of York (1993).

\bibitem{meldrum1995wreath} J. D. Meldrum, Wreath Products of Groups and Semigroups, CRC Press (1995).

\bibitem{Munn idem sep} W. D. Munn, A certain sublattice of the lattice of congruences on a regular semigroup, {\it Math. Proc. Cambridge Philos. Soc.} 60(3) (1964) 385-391.

\bibitem{v* alg} W. Narkiewicz, Independence in a certain class of abstract algebras, {\it Fund. Math.} 50 (1961/62) 333-340.

\bibitem{congs inv 2} M. Petrich, Congruences on inverse semigroups, {\it J. Algebra} 55(2) (1978) 231-56.

\bibitem{preston1959congruences} G. B. Preston, Congruences on Brandt semigroups, {\it Math. Ann.} 139(2) (1959) 91-94.

\bibitem{congs inv 1} H. E. Scheiblich, Kernels of inverse semigroup homomorphisms, {\it J. Aust. Math. Soc.} 18 (1974) 289-292. 

\bibitem{Usenko} V. M. Usenko, Subgroups of semidirect products, {\it Ukrainian Math. J.} 43(7-8) (1991) 982-988.

\end{thebibliography}
\end{document}